\DeclareMathOperator{\diag}{diag}
\DeclareMathOperator{\Ad}{Ad}
\DeclareMathOperator{\ad}{ad}
\DeclareMathOperator{\Aut}{Aut}
\DeclareMathOperator{\Id}{Id}
\DeclareMathOperator{\Hgt}{ht}
\DeclareMathOperator{\Ric}{Ric}
\DeclareMathOperator{\Span}{span}
\DeclareMathOperator{\rnk}{rk}
\newcommand{\fr}{\mathfrak}
\newcommand{\al}{\alpha}
\newcommand{\be}{\beta}
\newcommand{\bb}{\mathbb}
\DeclareMathOperator{\SO}{SO}
\DeclareMathOperator{\Sp}{Sp}
 \DeclareMathOperator{\SU}{SU}
\DeclareMathOperator{\U}{U}
\DeclareMathOperator{\G}{G}
\DeclareMathOperator{\F}{F}
\DeclareMathOperator{\E}{E}
\newcommand{\thickline}{\noalign{\hrule height 1pt}}
\newtheorem{lemma} {Lemma} [section]
\newtheorem{theorem}[lemma]{Theorem} 
\newtheorem{remark}[lemma] {Remark} 
\newtheorem{prop} [lemma]{Proposition}  
\newtheorem{definition}[lemma] {Definition} 
\newtheorem{corol}[lemma] {Corollary} 
\newtheorem{example}[lemma] {Example}
\begin{document}

\title
{Flag manifolds, symmetric $\fr{t}$-triples and Einstein metrics}

\author{Ioannis Chrysikos}
\address{University of Patras, Department of Mathematics, GR-26500 Rion, Greece}
 \email{xrysikos@master.math.upatras.gr}
 \address{Current adress:  Department of Mathematics and Statistics, Masaryk University, Brno  611 37, Czech Republic}
 \email{chrysikosi@math.muni.cz}
  \medskip

\begin{abstract}
Let $G$ be a compact connected simple Lie group and let $M=G^{\bb{C}}/P=G/K$ be a generalized flag manifold. In this article we   focus on an important  invariant of $G/K$, the so called $\fr{t}$-root system $R_{\fr{t}}$, and we introduce the notion of symmetric $\fr{t}$-triples, that is triples of $\fr{t}$-roots $\xi, \zeta, \eta\in R_{\fr{t}}$ such that $\xi+\eta+\zeta=0$.  We describe their properties and we present an interesting application  on the structure constants of $G/K$, quantities which are straightforward related to the construction of the homogeneous Einstein metric on $G/K$.   Next we classify  symmetric $\fr{t}$-triples for   generalized flag manifolds $G/K$ with  second Betti number $b_{2}(G/K)=1$, and we treat also the case of  full flag manifolds $G/T$, where $T$ is a maximal torus of $G$.  In the last section we construct the homogeneous Einstein equation on flag manifolds $G/K$ with five isotropy summands, determined by  the simple Lie group  $G=\SO(7)$. By solving the corresponding algebraic system we  classify  all $\SO(7)$-invariant (non-isometric) Einstein metrics, and these are the very first results towards the classification of homogeneous Einstein metrics on flag manifolds with five isotropy summands.   

 \medskip
\noindent 2000 {\it Mathematics Subject Classification.} Primary 53C25; Secondary 53C30, 17B22.

\medskip
\noindent   Keywords:    generalized flag manifold,    $\fr{t}$-root, symmetric $\fr{t}$-triple, root system,  homogeneous Einstein metric.

\end{abstract}
 
\maketitle

  \section*{Introduction}
\markboth{Ioannis Chrysikos}{Flag manifolds, symmetric $\fr{t}$-triples and Einstein metrics}

 Let $G$ be a compact,  connected,  simple Lie group with Lie algebra $\fr{g}$ and let   $\fr{g}^{\bb{C}}$ be the complexification of $\fr{g}$. We will denote by $\Ad : G\to\Aut(\fr{g})$ the adjoint representation of $G$, by $\varphi( \cdot , \cdot )$  the Killing form of $\fr{g}$, and by   $( \cdot , \cdot)=-\varphi( \cdot , \cdot )$     the induced $\Ad(G)$-invariant inner product.  Recall that a  generalized  flag manifold is a complex homogeneous space of the form $M=G^{\bb{C}}/P$ where  $G^{\bb{C}}$ is the unique simply connected complex Lie group with Lie algebra  $\fr{g}^{\bb{C}}$ and $P$ is a parabolic subgroup of $G^{\bb{C}}$.  $M$ is   diffeomorphic to the coset $G/K=G/C(S)$, where $C(S)$ is the centralizer of a torus $S\subset G$, and thus $M=\Ad(G)w=\{\Ad(g)w : g\in G\}$ is an adjoint orbit of an element $w\in\fr{g}$.    If   $S$ is a maximal torus in $G$,  say $T$, then $C(T)=T$ and we get the full flag manifold $M=G/T$.  In this case we have the diffeomorphism  $M=G^{\bb{C}}/B\cong G/T$, where $B$ is a Borel subgroup of $G^{\bb{C}}$.     An important   invariant which is closely related to the geometry and the structure of a flag manifold $M=G/K$,  is  the set $R_{\fr{t}}$ of {\it $\fr{t}$-roots}.  These are linear forms obtained by   restricting the set $R_{M}$ of complementary roots of $M$  to the space $\fr{t}$, a real form of  the center of  the Lie subalgebra $\fr{k}^{\bb{C}}\subset\fr{g}^{\bb{C}}$.   They were first introduced by  de Siebenthal  \cite{Sie}, but their current form is due to D. V. Alekseevsky (\cite{Ale1}, \cite{AP}).   To be more specific, $\fr{t}$-roots are the minimal weights of the irreducible submodules of the isotropy representation of $G/K$, and thus they have a fundamental role in    the K\"ahlerian geometry of  $G/K$.  For example,  most $G$-invariant objects on $G/K$, like as Riemannian metrics,  complex structures and K\"ahler-Einstein  metrics, can be expressed in terms of  the $\fr{t}$-root system $R_{\fr{t}}$ (cf. \cite{Ale1}, \cite{AP},   \cite{Arv}).  
  
In this paper we    introduce the notion of  {\it symmetric $\fr{t}$-triples}, that is   triples  of $\fr{t}$-roots $\xi, \zeta, \eta\in  R_{\fr{t}}$ with $\xi+\zeta+\eta=0\in\fr{t}^{*}$.  Due to correspondence between $\fr{t}$-roots $\xi\in  R_{\fr{t}}$ and  non-equivalent irreducible submodules $\fr{m}_{\xi}$ of the $\Ad(K)$-module $\fr{m}^{\bb{C}}$,   symmetric $\fr{t}$-triples are straightforward related  with the structure constants $c_{ij}^{k}$ of $G/K$ which are defined as follows:  We consider an $( \cdot   , \cdot )$-orthogonal reductive decomposition $\fr{g}=\fr{k}\oplus\fr{m}$ of    $\fr{g}=T_{e}G$ and as usual we identify the $\Ad(K)$-invariant subspace  $\fr{m}$ with the tangent space $T_{o}G/K$ of $G/K$ at the identity $o=eK\in G/K$. Now, we asssume that the   direct sum decomposition   $\fr{m}=T_{o}G/K=\fr{m}_1\oplus\cdots\oplus\fr{m}_{s}$ 
  determines an  $( \cdot , \cdot )$-orthogonal   decomposition of $\fr{m}$    into  $s$ pairwise inequivalent irreducible $\Ad(K)$-modules (such a decomposition always exists and    it is given in terms of $\fr{t}$-roots).  We fix   an   $( \cdot , \cdot )$-orthogonal basis $\{e_{\al}\}$ adapted to the  
   decomposition of $\fr{m}$, that is  $e_{\al}\in \fr{m}_{i}$ for some $i$, 
   and $\al<\be$ if $i<j$ (with $e_{\al}\in \fr{m}_{i}$ and $e_{\be}\in\fr{m}_{j}$), and we set  $A_{\al\be}^{\gamma}=([e_{\al}, e_{\be}], e_{\gamma})$ such  
   that $[e_{\al}, e_{\be}]_{\fr{m}}=\sum_{\gamma}A_{\al\be}^{\gamma}X_{\gamma}$, where $[ \ , \ ]_{\fr{m}}$ denotes the $\fr{m}$-component. Then, the structures constanst of $G/K$ with respect to the decomposition $\fr{m}=\oplus_{i=1}^{s}\fr{m}_{i}$, are given by
  \begin{equation}\label{struc}
  c_{ij}^{k}:=\displaystyle {k \brack ij}:=\sum(A_{\al\be}^{\gamma})^{2}=\sum \Big(([e_{\al}, e_{\be}], e_{\gamma})\Big)^{2},
  \end{equation}
    where the sum is taken over all   indices $\al, \be, \gamma$ with $e_{\al}\in \fr{m}_{i}, e_{\be}\in\fr{m}_{j}$,   $e_{\gamma}\in\fr{m}_{k}$, and  $i, j, k\in\{1, \ldots, s\}$  (\cite{Wa2}).   
 In Section \textsection 1 we will  show that symmetric $\fr{t}$-triples   in $\fr{t}^{*}$ are in a bijective correspondence with the non-zero  $c_{ij}^{k}$. Since these quantities   are closely related with the construction of the homogeneous Einstein equation, it  turns out that  symmetric $\fr{t}$-triples   have a key role  in the related theory of  Einstein metrics.

  Recall that a  Riemannian manifold $(M, g)$ is called {\it Einstein}  if it has constant Ricci curvature, i.e. $\Ric_{g}=\lambda  \cdot g$, where  $\lambda \in\bb{R}$ is the so called {\it Einstein constant}.  The Einstein  equation forms a system of  non-linear second order PDEs  and a good understanding of its solutions in the general case seems far from being attained. It is more manageable   when   a Lie group $G$ of isometries acts  on the manifold $M$, via various ways. In the homogeneous case and for a $G$-invariant Riemannian metric,     the Einstein equation  reduces to a system of  algebraic equations which  in some cases can been solved explicitly.  However, even in this case, general existence or non-existence results are difficult to obtained  and are related with  topological notions  (cf. \cite{Wa2}, \cite{SP}, \cite{Ker}). We mention that for a homogeneous Riemannian manifold    $(M=G/K, g)$   of a compact connected (semi)simple Lie group $G$, the Ricci tensor    is expressed in terms of the structure constants $c_{ij}^{k}$,  the parameters $x_{i}$ which define the invariant metric tensor $g$   with respect to the   decomposition  $\fr{m}=\oplus_{i=1}^{s}\fr{m}_{i}$, and the dimensions $d_{i}=\dim_{\bb{R}}\fr{m}_{i}$  for any $i=1, \ldots, s$.   The determination, as well as the calculation of   all the non-zero $c_{ij}^{k}$,  are   non-trivial problems  towards to the formulation  of the   equation  $\Ric_{g}=\lambda  \cdot g$ on $(M=G/K, g)$.

 Recently it has been a lot of progress on homogeneous Einstein metrics on flag manifolds. For example, they have been completely classified   for any flag manifold  $M=G/K$ (of a compact simple Lie group $G$) with two (\cite{Sakane},   \cite{Chry1}, \cite{stauros}),     three (\cite{Kim}, \cite{Arv}, \cite{stauros}),  or four isotropy summands (\cite{Chry2}, \cite{ACS}, \cite{ACS2}).   Homogeneous Einstein metrics on full flag manifolds corresponding to classical Lie groups have been also studied by several authors (cf. \cite{Arv}, \cite{Sak}, \cite{DSN}). Moreover,  in a recent work of the author in collaboration with  A. Arvanitoyeorgos and Y. Sakane   (\cite{ACS3}),   all $\G_2$-invariant Einstein metrics were obtained on the exceptional full flag manifold $\G_2/T$ (a homogeneous space with six isotropy summands).  A further study on invariant Einstein metrics on  $\Sp(n)$-flag manifolds whose $\fr{t}$-root system is of the same form of the full flag manifold $\G_2/T$, namely of {\it $\G_2$-type}, was given in \cite{ACS1}.  Flag manifolds whose isotropy representation decomposes into more than four  isotropy summands are treated also in \cite{CS}, as well as in \cite{Gr}, where M. M. Graev  by applying methods of  aglebraic geometry over complex numbers (Newton polytopes, etc.), presented  a lot of interesting results related with  the number of the complex invariant Einstein metrics on flag spaces.  Another alternative approach to homogeneous Einstein metrics, has been recently established by the author in collaboration with S. Anastassiou (\cite{stauros}); in this paper  the global behaviour of the normalized Ricci flow on the space of $G$-invariant metrics for a flag manifold $G/K$ was studied, and invariant Einstein metrics were obtained explicitly  as the singularities of this flow, located at infity.  This approach seems to give  a better insight on the behaviour of the   Einstein equation and as it has been mentioned in \cite{stauros}, it worths further investigation for a better understanding of its benefits (see also \cite{Grama}).
   
    In this article we classify all invariant Einstein metrics on flag spaces   $G/K$ with five isotropy summands, for the Lie group $G=\SO(7)$.  There are only two such cosets,     defined by the subsets $\Pi_{M_1}=\{\al_1, \al_3\}\subset\Pi$, and $\Pi_{M_2}=\{\al_2, \al_3\}\subset\Pi$, respectively, where $\Pi=\{\al_1, \al_2, \al_3\}$ is a system of simple roots for $\SO(7)$.  Both of them are given  by  $M=\SO(7)/\U(1)\times \U(2)\cong \SO(7)/ \U(1)^{2}\times \SU(2)$,  and as we see in \textsection 3,  these flag manifolds   are isometric (as real manifolds),  and thus we will not distinguish them.  Note that if we paint black both the first two simple roots in the Dynkin diagram of $\SO(7)$, that is $\Pi_{M}=\{\al_1, \al_2\}$, then we obtain also a flag manifold of the form $\SO(7)/ \U(1)^{2}\times \SU(2)\cong \SO(7)/\U(1)^{2}\times \SO(3)$, but it has four isotropy summands (see \cite{Chry2}).  We use  the theory of symmetric $\fr{t}$-triples and we determine all the non-zero structure constants $c_{ij}^{k}$ of $M$, with respect to the decomposition $\fr{m}=\oplus_{i=1}^{5}\fr{m}_{i}$.  For their computation, we  use  a K\"ahler--Einstein metric corresponding to an  $\SO(7)$-invariant complex structure $J$, induced by an invariant ordering $R_{M}^{+}$ on the set of the complementary roots of $M$.  In this way we write down explicitly the Ricci tensor and thus  the  algebraic system which determines the homogeneous Einstein equation.  We prove the following theorem:
   
     \smallskip
    { \bf{Theorem A.}}  
     {\it The flag manifold  $M=\SO(7)/\U(1)\times \U(2)\cong \SO(7)/ \U(1)^{2}\times \SU(2)$, defined by  the  set $\Pi_{M_1}=\{\al_1, \al_3\}\subset\Pi$, or the set $\Pi_{M}=\{\al_2, \al_3\}\subset\Pi$, admits  two   pairs of isometric $\SO(7)$-invariant K\"ahler-Einstein metrics.  There are  also  four  $\SO(7)$-invariant Einstein metrics, which are not K\"ahler with respect to any invariant complex structure on $M$. Two of them  are isometric, thus $M$ admits  three (up to isometry) non-K\"ahler-Einstein metrics.}

  \smallskip
  We have divided the paper into 3 sections. In   \textsection  1 we review  the  Lie theoretic description   of a flag manifold $M=G/K$,  we introduce the notion of symmetric $\fr{t}$-triples and we state their relation with structure constants.  In    \textsection 2, we focus   on  flag manifolds $M=G/K$ with $b_{2}(M)=1$ and we   prove  a structure  theorem    related to the associated $\fr{t}$-root  system $R_{\fr{t}}$ (cf. Theorem \ref{chrysik1}). Based on this result  we    obtain the full classification of symmetric $\fr{t}$-triples for such flag manifolds and we present them  for any case.  Next we extend our study of symmetric $\fr{t}$-triples to full flag manifolds $M=G/T$.  In the  final section \textsection 3,   we give the reader a quick view of the homogeneous Einstein equation and next we  prove Theorem  A.

\markboth{Ioannis Chrysikos}{Flag manifolds, symmetric $\fr{t}$-triples and Einstein metrics}
\section{Flag manifolds and symmetric $\fr{t}$-triples}
\markboth{Ioannis Chrysikos}{Flag manifolds, symmetric $\fr{t}$-triples and Einstein metrics}
Let   $M=G^{\bb{C}}/P=G/C(S)=G/K$ be a  generalized flag manifold of a compact, connected,  simple Lie group,  where $S$ is a torus in $G$. We  begin by  providing an algebraic description of $M$.  We follow the notation of introduction, and we  denote by   $\fr{g}^{\mathbb{C}}$ and $\fr{k}^{\bb{C}}$  the    complexifications of the Lie algebras   of $G$ and $K$, respectively.  We fix a maximal torus  $T$ in $G$ 
which contains the torus $S$ and we denote by $\fr{h}=T_eT$ its Lie algebra, and by $\fr{h}^{\bb{C}}$  the complexification of $\fr{h}$.   Then, $S\subset T\subset C(S)=K$, which means that $T$ is a maximal torus also for the isotropy subgroup $K$ and thus it is $\rnk G=\rnk K$. 
 Let $R\subset(\fr{h}^{\bb{C}})^{*}\backslash\{0\}$   be the root system of $\fr{g}^{\mathbb{C}}$    relative to  the Cartan subalgebra $\fr{h}^{\mathbb{C}}$  and let 
 $
 \fr{g}^{\mathbb{C}}=\fr{h}^{\mathbb{C}}\oplus\sum_{\al\in R}\fr{g}_{\al}^{\mathbb{C}}
$
   be the associated  root space decomposition.   
We consider   vectors $H_{\al}\in\fr{h}^{\bb{C}}$ which are  defined by $\varphi(H, H_{\al})=\al(H)$, for all $H\in\fr{h}^{\mathbb{C}}$, and let $\Pi=\{\al_{1}, \ldots, \al_{\ell}\}$ $(\ell=\dim\fr{h}^{\mathbb{C}})$ be a basis of simple roots for $R$. We will denote by $R^{+}$  the induced ordering.  We set $A_{\al}=E_{\al}+E_{-\al}$ and  $B_{\al}=\sqrt{-1}(E_{\al}-E_{-\al})$, where    $E_{\al}\in\fr{g}_{\al}^{\mathbb{C}}$ $(\al\in R^{+})$ is a Weyl basis of $\fr{g}^{\bb{C}}$ (i.e. $\varphi(E_{\al}, E_{-\al})=-1$  and  $[E_{\al}, E_{-\al}]=-H_{\al}$). Then, the real Lie algebra $\fr{g}$, as  a real form of $\fr{g}^{\bb{C}}$,  is identified with the fixed point set $\fr{g}^{\tau}$ of the conjugation   $\tau : \fr{g}^{\bb{C}}\to\fr{g}^{\bb{C}}$, defined by  $\tau(E_{\al})=E_{-\al}$.  Thus, it is $\fr{g}^{\tau}=\fr{g}=\fr{h}\oplus \sum_{\al\in R^{+}}(\mathbb{R}A_{\al}+\mathbb{R}B_{\al})$.   

Because $\fr{h}^{\bb{C}}\subset\fr{k}^{\bb{C}}\subset\fr{g}^{\bb{C}}$,  there is a closed subsystem $R_{K}$ of $R$ such that 
$\fr{k}^{\bb{C}}=\fr{h}^{\bb{C}}\oplus\sum_{\al\in R_{K}}\fr{g}_{\al}^{\mathbb{C}}$.  Indeed, we can always  find  a subset $\Pi_{K}\subset\Pi$   such that  $R_{K}=R\cap\left\langle\Pi_{K} \right\rangle=\{\be\in R : \be=\sum_{\al_{i}\in\Pi_{K}}k_{i}\al_{i}, \ k_{i}\in\bb{Z}\}$, 
where    $\left\langle\Pi_{K} \right\rangle$  is the space of roots generated by $\Pi_{K}$  with integer coefficients. The complex Lie algebra $\fr{k}^{\mathbb{C}}$  is a maximal reductive subalgebra of $\fr{g}^{\mathbb{C}}$, so we get the decomposition  $\fr{k}^{\mathbb{C}}=Z(\fr{k}^{\mathbb{C}})\oplus\fr{k}^{\mathbb{C}}_{ss},$
where $Z(\fr{k}^{\mathbb{C}})$ is  the  center of $\fr{k}^{\mathbb{C}}$ and  $\fr{k}^{\mathbb{C}}_{ss}=[\fr{k}^{\mathbb{C}}, \fr{k}^{\mathbb{C}}]$ 
is its  semi-simple part.  In particular,   it is $\fr{k}^{\bb{C}}_{ss}=\fr{h}'\oplus\sum_{\al\in R_{K}}\fr{g}_{\al}^{\bb{C}}=\sum_{\al\in\Pi_{K}}\bb{C}H_{\al}\oplus\sum_{\al\in R_{K}}\fr{g}_{\al}^{\bb{C}}$, where $\fr{h}'=\sum_{\al\in\Pi_{K}}\bb{C}H_{\al}\subset\fr{h}^{\bb{C}}$ is the Cartan subalgebra of $\fr{k}^{\mathbb{C}}_{ss}$.   Thus,  $R_{K}$ is  the root system of the semi-simple part  $\fr{k}^{\mathbb{C}}_{ss}$ with a basis of simple roots given by $\Pi_{K}$, i.e. $\dim_{\bb{C}}\fr{h}'=|\Pi_{K}|$, where  $|\Pi_{K}|$ denotes  the cardinality of  $\Pi_{K}$.  The real Lie algebra $\fr{k}$  of $K$  (which is a reductive Lie subalgebra of $\fr{g}$), has the form $\fr{k}=\fr{h}\oplus\sum_{\al\in R_{K}^{+}}(\mathbb{R}A_{\al}+\mathbb{R}B_{\al})$, where  $R_{K}^{+}=R^{+}\cap\left\langle\Pi_{K}\right\rangle$ is the ordering in $R_{K}$ induced by $R^{+}$. The center    $\fr{s} =Z(\fr{k})$ of $\fr{k}$ is given by $\fr{s}=i\fr{t}$, where the subspace $\fr{t}$ is defined  by  (cf. \cite{Ale1}, \cite{Arv}):
\[
\fr{t}=Z(\fr{k}^{\mathbb{C}}) \cap i\fr{h}=\{X\in\fr{h} : \phi(X)=0 \ \mbox{for all} \ \phi\in R_{K}\}.
\] 
  To be more specific, $\fr{t}$ is a real form of the center $Z(\fr{k}^{\mathbb{C}})$, i.e.  $\fr{k}^{\mathbb{C}} =  \fr{t}^{\mathbb{C}}\oplus \fr{k}^{\mathbb{C}}_{ss}$.  Because $\fr{t}\subset\fr{h}\subset\fr{k}$, it is
$\fr{h}=\fr{t}\oplus\fr{t}'$, where $\fr{t}'=\Span\{iH_{\be} : \be\in\Pi_{K}\}$. 
Thus, the Cartan subalgebra $\fr{h}'$ is given by $\fr{h}'=(\fr{t}')^{\bb{C}}$, which means that  $\dim_{\bb{R}}\fr{t}'=\dim_{\bb{C}}\fr{h}'=|\Pi_{K}|$, and so  $\dim_{\bb{R}}\fr{t}=\ell-|\Pi_{K}|$, where $\ell=\rnk \fr{g}^{\bb{C}}   =\dim_{\bb{R}}\fr{h}=\dim_{\bb{R}} T$.  By  \cite[p.~507]{B}, it is    $H^{2}(M; \bb{R})=H^{1}(K; \bb{R})=\fr{t}$, hence
 the second Betti number   of    $M=G/K$ is  equal to  $b_{2}(M)=\dim_{\bb{R}}H^{2}(M; \bb{R})=\dim_{\bb{R}} \fr{t}$, and it  is obtained directly from the   painted Dynkin diagram (see below).

We set $\Pi_{M}:=\Pi\backslash\Pi_{K}$, $R_{M}:=R\backslash R_{K}$ and  $R_{M}^{+}:=R^{+}\backslash R_{K}^+$, such that $\Pi=\Pi_{K}\sqcup\Pi_{M}$, $R=R_{K}\sqcup R_{M}$, and $R^{+}=R^{+}_{K}\sqcup R^{+}_{M}$, respectively.  These splittings  characterize  the flag manifold $M=G/K$.  Roots in  $R_{M}$ are called  {\it complementary roots} and have a key role in   theory which we will describe below.  We recall that an {\it  invariant ordering} $R_{M}^{+}$ in $R_{M}$ is the choise of a subset  $R_{M}^{+}\subset R_{M}$  which satisfies the splitting 
$R=R_{K}\sqcup R_{M}^+\sqcup R_{M}^{-}$, where $R_{M}^{-}=\{-\al: \al\in R_{M}^{+}\}=-R_{M}^{+}$, such that:

(i) \  $\al, \be\in R_{M}^{+}$, $\al+\be\in R_{M}$ $\Longrightarrow$ $\al+\be\in R_{M}^{+}$, 

  (ii)  $\al\in  R_{M}^{+}$,  $\be\in R_{K}^{+}$,  $\al+\be\in R$,  $\Longrightarrow$  $\al+\be\in R_{M}^{+}$.

\noindent We say that $\al>\be$ if and only if $\al-\be\in R_{M}^{+}$. Invariant orderings $R_{M}^{+}\subset R_{M}$ are very useful. For example, let $\fr{g}=\fr{k}\oplus\fr{m}$ be an $( \cdot , \cdot )$-orthogonal reductive decomposition. Then we  see that   the $\Ad(K)$-module $\fr{m}=T_{o}M$ has the form $\fr{m}=T_{o}(G/K)=\sum_{\al\in R_{M}^{+}}(\mathbb{R}A_{\al}+\mathbb{R}B_{\al})$. Moreover, for the complexified version $\fr{m}^{\bb{C}}$ we get the expression $
\fr{m}^{\mathbb{C}}= T_{o}(G/K)^{\mathbb{C}}=\sum_{\al\in R_{M}}\mathbb{C}E_{\al}$. 
 Now, all information contained in  the splitting $\Pi=\Pi_{K}\sqcup\Pi_{M}$ can be presented graphically by the  painted Dynkin diagram of $M=G/K$.
  \begin{definition}\label{pdd}
 Let $\Gamma=\Gamma(\Pi)$ be the Dynkin diagram of the fundamental 
 system $\Pi$.  By painting in  black the nodes 
 of $\Gamma$  corresponding    to  $\Pi_{M}:=\Pi\backslash\Pi_{K}$, we obtain the painted 
 Dynkin diagram (PDD)  of the flag manifold $G/K$. In this diagram
    the subsystem $\Pi_{K}$ is determined by the subdiagram of white roots and each black 
    node  gives rise to one $\U(1)$-component, which their totality forms the center  of $K$.  
 \end{definition}

From now on we fix  a basis $\Pi=\{\al_1,\ldots, \al_r, \phi_1, \ldots, \phi_k\}$      of $R$,  such that $r+k=\ell=\rnk\fr{g}^{\bb{C}}$  and we assume that $\Pi_{K}=\{\phi_1, \ldots, \phi_k\}$ is a basis of the root system 
   $R_{K}$ of $K$.  We set $\Pi_{M}=\Pi\backslash \Pi_{K}=\{\al_{1}, \ldots, \al_{r}\}$.   
 Let $\Lambda_{1}, \ldots, \Lambda_{r}$ be  the fundamental weights    corresponding to the 
simple roots of $\Pi_{M}$, i.e. the linear forms defined by the relations
 $2\varphi(\Lambda_{i}, \al_{j})/ \varphi(\al_{j}, \al_{j})= \delta_{ij}$  and $\varphi(\Lambda_{j}, \phi_{i})= 0$, 
 where $\varphi( \al , \be )$   denotes the inner product on $(\fr{h}^{\mathbb{C}})^{*}$ given by $\varphi(\al, \be)=\varphi(H_{\al}, H_{\be})$, for all $\al, \be\in(\fr{h}^{\mathbb{C}})^{*}$.
 It is well known    that $\{\Lambda_{i} : 1\leq i\leq r\}$ is a basis of the dual space $\fr{t}^{*}$ of $\fr{t}$, thus  $\fr{t}^{*}=\sum_{i=1}^{r}\mathbb{R}\Lambda_{i}$ and $\dim\fr{t}^*=\dim \fr{t}=r$ (\cite{AP}).
Consider   the   restriction map  $\kappa : \fr{h}^{*}\to \fr{t}^{*}$ defined by $\kappa(\al)=\al|_{\fr{t}}$,
and set  $R_{\fr{t}} = \kappa(R)=\kappa(R_{M})$ and $\kappa(R_{K})=0\in\fr{t}^{*}$. 
  The elements of $R_{\fr{t}}$ are called {\it $\fr{t}$-roots}.
  For an invariant ordering $R_{M}^{+}=R^{+}\backslash R_{K}^{+}$ in $R_{M}$, we set $R_{\fr{t}}^{+}=\kappa(R_{M}^{+})$,  $R_{\fr{t}}^{-}=-R_{\fr{t}}^{+}=\{-\xi : \xi\in R_{\fr{t}}^{+}\}$ and we get the splitting $R_{\fr{t}}=R_{\fr{t}}^{+}\sqcup R_{\fr{t}}^{-}$, which defines an ordering in $R_{\fr{t}}$ (cf. \cite{Chry2});     $\fr{t}$-toots $\xi\in R_{\fr{t}}^{+}$ (resp. $\xi\in R_{\fr{t}}^{-}$) are called {\it positive}  (resp. {\it negative}).   A $\fr{t}$-root is called {\it simple} if it is not a sum of two positive $\fr{t}$-roots.    The set $\Pi_{\fr{t}}$ of all simple $\fr{t}$-roots, the so called {\it $\fr{t}$-basis},  is   a basis  of $\fr{t}^*$   in the sense that any $\fr{t}$-root can be written  as a linear combination of its elements with integer coefficients of the same sign.  According to   \cite{AP}, \cite{Chry2}, a $\fr{t}$-basis $\Pi_{\fr{t}}$  is obtained by restricting  the   roots of $\Pi_{M}$ to $\fr{t}$, that is
  $
  \Pi_{\fr{t}}=\{\overline{\al}_{i}=\al_{i}|_{\fr{t}} : \al_{i}\in \Pi_{M}\}.
 $
This allows us to   set up a useful method  to obtain expicitly the set $R_{\fr{t}}$ (see \cite{AA}, \cite{Chry2}). 
 \begin{prop} \label{isotropy} 
  \textnormal{(\cite{Sie}, \cite{Ale1}, \cite{AP})} There exists a bijective correspondence 
 between $\fr{t}$-roots and inequivalent complex irreducible $\ad(\fr{k}^{\mathbb{C}})$-submodules 
 $\fr{m}_{\xi}$ of $\fr{m}^{\mathbb{C}}$, given by:  
\begin{equation}\label{onetoone}
R_{\fr{t}}\ni\xi  \ \leftrightarrow   \ \fr{m}_{\xi} =\sum_{\al\in R_{M}: \kappa(\al)=\xi}\mathbb{C}E_{\al}=\sum_{\al\in R_{M}: \kappa(\al)=\xi}\fr{g}_{\al}^{\bb{C}}.
\end{equation}
Thus $\fr{m}^{\mathbb{C}} = \sum_{\xi\in R_{\fr{t}}} \fr{m}_{\xi}.$   
    Consequently, for   the real $\ad(\fr{k})$-module $\fr{m}=(\fr{m}^{\mathbb{C}})^{\tau}$ we have the following decomposition into   real pairwise inequivalent irreducible   $\ad(\fr{k})$-submodules: $\fr{m} = \sum_{\xi\in R_{\fr{t}}^{+}=\kappa(R_{M}^{+})}(\fr{m}_{\xi}\oplus  \fr{m}_{-\xi})^{\tau}$.
      \end{prop}
In order to study the properties of the decomposition $\fr{m}^{\bb{C}}=\sum_{\xi\in R_{\fr{t}}}\fr{m}_{\xi}$, the following lemma  due to M. Graev is very crucial (for a proof see for example \cite{AA}).

\begin{lemma}\label{lem4}  \textnormal{(Graev)} Let $\xi, \eta, \zeta$ be $\fr{t}$-roots such that $\xi+\eta+\zeta=0$.  Then, there exist roots $\al, \be, \gamma\in R$ with   $\kappa(\al)=\xi, \kappa(\be)=\eta, \kappa(\gamma)=\zeta,$ and such that $\al+\be+\gamma=0$.
\end{lemma}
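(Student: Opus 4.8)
The plan is to prove Lemma~\ref{lem4} by descending from the ordinary root system $R$ to the $\fr{t}$-root system via the restriction map $\kappa$, producing the desired roots one at a time. First I would pick arbitrary preimages: since $\xi, \eta, \zeta \in R_{\fr{t}} = \kappa(R_M)$, choose $\al_0 \in R_M$ with $\kappa(\al_0) = \xi$ and $\be_0 \in R_M$ with $\kappa(\be_0) = \eta$. The key observation is that $\kappa(\al_0 + \be_0) = \xi + \eta = -\zeta \neq 0$, so $\al_0 + \be_0$ restricts to a nonzero element of $\fr{t}^*$; in particular $\al_0 + \be_0 \notin R_K \cup \{0\}$. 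The question is whether $\al_0 + \be_0$ is itself a root. If it is, we are essentially done: set $\gamma = -(\al_0+\be_0) \in R$, and then $\al_0 + \be_0 + \gamma = 0$ with $\kappa(\gamma) = -\xi - \eta = \zeta$, so $(\al_0, \be_0, \gamma)$ works. The main obstacle is the case where $\al_0 + \be_0 \notin R$, and the heart of the argument is to show we can always adjust the choice of preimages inside the fibers $\kappa^{-1}(\xi)$ and $\kappa^{-1}(\eta)$ to force the sum to be a root.

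For the adjustment step I would use the $\ad(\fr{k}^{\bb{C}})$-module structure from Proposition~\ref{isotropy}: the fiber $\kappa^{-1}(\xi)$ indexes a basis of the irreducible $\ad(\fr{k}^{\bb{C}})$-module $\fr{m}_\xi = \sum_{\kappa(\al)=\xi} \bb{C}E_\al$, and similarly for $\eta$ and $\zeta$. Since $\xi + \eta + \zeta = 0$, the module $\fr{m}_\zeta$ is dual (contragredient) to $\fr{m}_{\xi} \otimes \fr{m}_{\eta}$ in the appropriate sense — more precisely, $\fr{m}_{-\zeta} = \fr{m}_{\xi+\eta}$ appears in $\fr{m}_\xi \otimes \fr{m}_\eta$ under the bracket, because $[\fr{m}_\xi, \fr{m}_\eta] \subseteq \fr{m}_{\xi+\eta}$ and this bracket is a nonzero $\ad(\fr{k}^{\bb{C}})$-equivariant map onto the irreducible module $\fr{m}_{\xi+\eta} = \fr{m}_{-\zeta}$ (nonvanishing follows because if all brackets $[E_\al, E_\be]$ with $\kappa(\al)=\xi$, $\kappa(\be)=\eta$ vanished, then $\fr{m}_{\xi+\eta}$ would be killed, contradicting that it is a nonzero module and that the total bracket $[\fr{m}^{\bb{C}},\fr{m}^{\bb{C}}]$ realizes all of $\sum_{\zeta'} \fr{m}_{\zeta'}$ up to $\fr{k}^{\bb{C}}$). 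Hence there exist $\al, \be \in R_M$ with $\kappa(\al) = \xi$, $\kappa(\be) = \eta$, and $N_{\al,\be} \neq 0$ in \eqref{str}, which by \eqref{str} forces $\al + \be \in R$. Then $\kappa(\al+\be) = \xi + \eta = -\zeta$, so setting $\gamma := -(\al+\be) \in R$ gives $\kappa(\gamma) = \zeta$ and $\al + \be + \gamma = 0$, as required.

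Alternatively — and this may be the cleaner route to write up — one can avoid the representation-theoretic nonvanishing argument and instead argue directly inside $R$ using string lemmas. Start with any preimages $\al \in \kappa^{-1}(\xi)$, $\be \in \kappa^{-1}(\eta)$; if $\al + \be \in R$ we are done as above. If $\al + \be \notin R$, then by the standard property of root systems $\al - \be \in R$ or $(\al,\be) \ge 0$; one then replaces $\be$ by a Weyl-group translate or moves along the $\be$-string through $\al$ to land on a genuine root. The point is that all roots restricting to a fixed $\fr{t}$-root $\eta$ differ by elements of $R_K \cup \{0\}$ (they lie in one $W_K$-orbit shifted appropriately), so one has room to maneuver within the fiber while keeping the restriction fixed. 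I would lean on the fact, already implicit in Proposition~\ref{isotropy}, that $\kappa^{-1}(\eta)$ is a single orbit of the subsystem $R_K$ acting by addition — i.e. for $\be, \be' \in \kappa^{-1}(\eta)$ there is a chain of root additions by elements of $R_K$ connecting them — which lets us push $\be$ to a position where $\al + \be$ becomes a root.

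The step I expect to be the main obstacle is precisely establishing that one can always realize $\al + \be \in R$ for suitable preimages; the existence of \emph{some} preimages whose sum restricts correctly is immediate, but upgrading "restricts to a $\fr{t}$-root" to "is an actual root" requires either the irreducibility/nonvanishing input from Proposition~\ref{isotropy} or a careful root-string argument. Everything else — computing $\kappa$ of sums, defining $\gamma$, checking $\al+\be+\gamma=0$ — is routine bookkeeping. Since Proposition~\ref{isotropy} is available to us, I would take the representation-theoretic route: it is short and conceptual, reducing the whole lemma to the single fact that the $\ad(\fr{k}^{\bb{C}})$-equivariant bracket $\fr{m}_\xi \otimes \fr{m}_\eta \to \fr{m}_{\xi+\eta}$ is nonzero whenever $\fr{m}_{\xi+\eta}$ is a genuine (nonzero) isotropy module, i.e. whenever $\xi + \eta = -\zeta \in R_{\fr{t}}$.
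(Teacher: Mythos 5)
First, a point of comparison: the paper itself offers no proof of this lemma --- it is quoted verbatim from \cite[Lemma 4]{AA} --- so there is no internal argument to measure yours against; it must stand on its own. Your proposal has the right architecture (reduce everything to finding preimages $\al\in\kappa^{-1}(\xi)$, $\be\in\kappa^{-1}(\eta)$ with $\al+\be\in R$, then set $\gamma=-(\al+\be)$), and you correctly isolate the crux. But the crux is not actually proved. Your justification that the $\ad(\fr{k}^{\bb{C}})$-equivariant bracket $\fr{m}_{\xi}\otimes\fr{m}_{\eta}\to\fr{m}_{\xi+\eta}$ is nonzero reads: if all brackets $[E_{\al},E_{\be}]$ vanished, then ``$\fr{m}_{\xi+\eta}$ would be killed.'' This does not follow. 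The module $\fr{m}_{\xi+\eta}$ is nonzero simply because $\xi+\eta=-\zeta\in R_{\fr{t}}$, independently of any bracket; and even granting that $[\fr{m}^{\bb{C}},\fr{m}^{\bb{C}}]$ covers every $\fr{m}_{\zeta'}$, that coverage could a priori come entirely from brackets $[\fr{m}_{\xi'},\fr{m}_{\eta'}]$ with $(\xi',\eta')\neq(\xi,\eta)$ but $\xi'+\eta'=\xi+\eta$. So the single fact you say the lemma reduces to is asserted, not established. The alternative root-string route has the same hole: connectivity of the fibre $\kappa^{-1}(\eta)$ under additions of roots of $R_{K}$ does not by itself let you ``push $\be$ to a position where $\al+\be$ becomes a root,'' and your stronger claim that the fibre is a single $W_{K}$-orbit is false in general (for $G_{2}$ with $\Pi_{K}=\{\al_{2}\}$ the fibre $\kappa^{-1}(\overline{\al}_{1})=\{\al_1,\al_1+\al_2,\al_1+2\al_2,\al_1+3\al_2\}$ splits into two $W_{K}$-orbits).

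One way to close the gap: since $\varphi(\xi+\eta,\xi+\eta)>0$, at least one of $\varphi(\xi,\xi+\eta)$, $\varphi(\eta,\xi+\eta)$ is positive, say the first. Each fibre of $\kappa$ is $W_{K}$-stable, so $\sum_{\al\in\kappa^{-1}(\xi)}\al$ is $W_{K}$-invariant and therefore equals $|\kappa^{-1}(\xi)|\cdot\xi$ under the orthogonal splitting $\fr{h}^{*}=\fr{t}^{*}\oplus\Span(\Pi_{K})$, and likewise for $\kappa^{-1}(\xi+\eta)$; averaging then produces $\al\in\kappa^{-1}(\xi)$ and $\delta\in\kappa^{-1}(\xi+\eta)$ with $\varphi(\al,\delta)>0$. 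Since $\al\neq\delta$ (their restrictions differ), the standard root-system fact gives $\be:=\delta-\al\in R$, with $\kappa(\be)=\eta$ and $\al+\be+(-\delta)=0$, $\kappa(-\delta)=\zeta$. Some argument of this kind (or the one actually given in \cite{AA}) is indispensable; as written, your proposal is a correct reduction of the lemma to its hard step, not a proof of it.
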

By using    Graev's lemma  and the properties of the root spaces $\fr{g}_{\al}^{\bb{C}}$  (cf. \cite[p.~ 168]{Hel}), we  obtain that
\begin{corol}\label{rootspaces} Let $M=G/K$ be a flag manifold and let $R_{\fr{t}}$ be its $\fr{t}$-root system. Then 

(1) If $\xi, \eta\in R_{\fr{t}}$ are such that $\xi+\eta \neq 0$, then $(\fr{m}_{\xi}, \fr{m}_{\eta})=0$.

(2) If  $\xi, \eta\in R_{\fr{t}}$ such that $\xi+\eta \neq 0$, $\xi+\eta\in R_{\fr{t}}$, then $[\fr{m}_{\xi}, \fr{m}_{\eta}]\neq 0$.  In particular, it is $[\fr{m}_{\xi}, \fr{m}_{\eta}]=\fr{m}_{\xi+\eta}$.
\end{corol}
\begin{proof}
   (1) Since by assumption it is $\xi+\eta\neq 0$ there is a $\fr{t}$-root, say $\zeta$, such that $\xi+\eta+(-\zeta)=0$.  Then, by Lemma \ref{lem4}, we can find complementary roots $\al, \be, \gamma\in R_{M}$ with $\kappa(\al)=\xi, \kappa(\be)=\eta$, $\kappa(-\gamma)=-\zeta$ and such that $\al+\be+(-\gamma)=0$. But then $\al+\be=\gamma\neq 0$ and thus for the associated root spaces we get  $(\fr{g}_{\al}^{\bb{C}}, \fr{g}_{\be}^{\bb{C}})=0$. Hence, in view of   (\ref{onetoone}) it follows that  $\big(\fr{m}_{\xi}, \fr{m}_{\eta}\big)=0$.

(2) We use again Lemma \ref{lem4} to  find roots   $\al, \be, \gamma\in R_{M}$ with   $\al+\be=\gamma\neq 0$. It means that $\al+\be\in R_{M}\subset R$ and hence $[\fr{g}_{\al}^{\bb{C}}, \fr{g}_{\be}^{\bb{C}}]=\fr{g}_{\al+\be}^{\bb{C}}=\fr{g}_{\gamma}^{\bb{C}}\neq 0$.  It is also $R_{\fr{t}}^{+}\ni\kappa(\al+\be)=\kappa(\al)+\kappa(\be)=\xi+\eta=\zeta$, i.e. $\kappa(\al+\be)\neq 0$, and thus $[\fr{m}_{\xi}, \fr{m}_{\eta}]=\fr{m}_{\zeta}\neq 0$. Since 
$
\fr{m}_{\zeta}=\sum_{\stackrel{\gamma\in R_{M}}{\kappa(\gamma)=\zeta}}\fr{g}_{\gamma}^{\bb{C}}=\sum_{\stackrel{(\al+\be)\in R_{M}}{\kappa(\al+\be)=\xi+\eta}}\fr{g}_{\al+\be}^{\bb{C}},
$
it follows that $[\fr{m}_{\xi}, \fr{m}_{\eta}]=\fr{m}_{\xi+\eta}$.
\end{proof}
Let us now focus on the real  $\Ad(K)$-module $\fr{m} = \sum_{\xi\in R_{\fr{t}}^{+}=\kappa(R_{M}^{+})}(\fr{m}_{\xi}\oplus  \fr{m}_{-\xi})^{\tau}$.  For simplicity we assume that  $R_{\fr{t}}^{+}=\{\xi_1, \ldots, \xi_s\}$.  Then, we have the decomposition $\fr{m}=\oplus_{i=1}^{s}\fr{m}_i$ where  each real irreducible $\ad(\fr{k})$-submodule $\fr{m}_{i}=(\fr{m}_{\xi_{i}}\oplus  \fr{m}_{-\xi_{i}})^{\tau}$ $(1\leq i\leq s)$
corresponding to the positive $\fr{t}$-root $\xi_i$,   is given by
\begin{equation}\label{bas}
\fr{m}_{i}=\sum_{\al\in R_{M}^+ \ :\ \kappa(\al)=\xi_{i}}(\mathbb{R}A_{\al}+\mathbb{R}B_{\al}).
\end{equation} 
  Equation (\ref{bas})  shows that an $( \cdot , \cdot )$-orthogonal  basis of the  component $\fr{m}_{i}$  consists of the vectors $\{A_{\al}= (E_{\al}+E_{-\al}), B_{\al}=i(E_{\al}-E_{-\al})\}$, where  the complementary roots  $\al\in R_{M}^{+}$ are such that $\kappa(\al)=\xi_{i}$, for any $1\leq i\leq s$.   For simplicity, we  denote such a basis by $      \{v_{i}\}=\{A_{\al}, B_{\al} : \al\in R_{M}^{+}, \ \kappa(\al)=\xi_{i}\in R_{\fr{t}}^{+}\}$,   for any $1\leq i\leq s$. Thus, $\dim_{\bb{R}}\fr{m}_{i}=|\{ \pm\al\in R_{M}  : \kappa(\pm\al)=\pm\xi_{i}\}|$.  
  \begin{remark}\label{invord2}\textnormal{By Corollary \ref{rootspaces} (2), it follows that for  any $\al, \be\in R_{M}^{+}$ with $\al+\be\in R$,  it is always $[e_{\al}, e_{\be}]=e_{\al+\be}\in \{v_{i+j}\}$, where   $e_{\al}\in \{v_{i}\}$, $e_{\be}\in\{v_{j}\}$, and $\{v_{i+j}\}=\{A_{\al+\be}, B_{\al+\be} : \al+\be\in R_{M}^{+}, \ \kappa(\al+\be)=\xi_{i}+\xi_{j}\in R_{\fr{t}}^{+}\}$. }
     \end{remark}

 \begin{definition}\label{ttriples} 
  A {\it symmetric $\fr{t}$-triple} in $\fr{t}^{*}$ is a triple $\Xi=(\xi, \eta, \zeta)$ of $\fr{t}$-roots $\xi, \eta, \zeta\in R_{\fr{t}}$ such that $\xi+\eta+\zeta=0$.   
\end{definition}
 
 We mention that a symetric $\fr{t}$-triple $\Xi=(\xi, \eta, \zeta)$ always remains invariant under a permutation  of its components $\xi, \eta$ and $\zeta$, i.e. under the action of the symmetric group $\mathcal{S}_{3}$. 
  Moreover, because in $R_{\fr{t}}$ we  have a polarization $R_{\fr{t}}=R_{\fr{t}}^+\sqcup R_{\fr{t}}^{-}$,  we  associate to any symmetric $\fr{t}$-triple  $\Xi=(\xi, \eta, \zeta)$      a negative one, given by  $-\Xi=(-\xi, -\eta, -\zeta)=-(\xi, \eta, \zeta)$.   Note also  that given a symmetric $\fr{t}$-triple $\Xi=(\xi, \eta, \zeta)$ and an integer $\lambda\in\bb{Z}^{*}$, $\lambda\neq \pm 1$, such that $\lambda\xi, \lambda\eta, \lambda\zeta\in R_{\fr{t}}$, then  $\lambda\xi+\lambda\eta+\lambda\zeta=\lambda(\xi+\eta+\zeta)=0$.  Therefore we can define a new symmetric $\fr{t}$-triple, say $\lambda\Xi$, given by $\lambda\Xi=\lambda(\xi, \eta, \zeta)=(\lambda\xi, \lambda\eta, \lambda\zeta)$.
  \begin{definition}\label{equival}
  Two symmetric $\fr{t}$-triples $\Xi=(\xi, \eta, \zeta)$, $\Xi'=(\xi', \eta', \zeta')$ in $R_{\fr{t}}$ are called equivalent if and only if $\Xi=\pm \Xi'$.
  \end{definition}
\begin{lemma}\label{restriction} Let $M=G/K$ be a generalized flag manifold  and let $R_{\fr{t}}$ be the associated $\fr{t}$-root system. Given a symmetric $\fr{t}$-triple $\Xi=(\xi, \eta, \zeta)$,    the following are true:
 
   (1)  $\Xi$ cannot contain  only positive, or only negative  $\fr{t}$-roots, i.e. it cannot be  $\xi, \eta, \zeta\in R_{\fr{t}}^{+}$, or  $\xi, \eta, \zeta\in R_{\fr{t}}^{-}$.
 
 (2) $\Xi$ cannot contain simultaneously a  $\fr{t}$-root and its negative.

 \end{lemma}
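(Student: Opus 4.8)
The plan is to reduce each assertion to an elementary property of the ordering on $\fr{t}^*$ coming from the invariant ordering $R_M^+$ (Definition~\ref{invord}), namely that $R_{\fr{t}}=R_{\fr{t}}^+\sqcup R_{\fr{t}}^-$ with $R_{\fr{t}}^-=-R_{\fr{t}}^+$, and that $R_{\fr{t}}^+$ is closed under addition inside $R_{\fr{t}}$. These facts were recorded in the excerpt: the splitting of $R_{\fr{t}}$ right after Definition~\ref{def1}, and closedness of $R_{\fr{t}}^+$ in Remark~\ref{invord2} (relation~(\ref{geninv}) restricted to $\fr{t}$ gives $\xi+\zeta\in R_{\fr{t}}^+$ whenever $\xi,\zeta\in R_{\fr{t}}^+$ and $\xi+\zeta\in R_{\fr{t}}$). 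I would also use the trivial remark that $0\notin R_{\fr{t}}$, since by construction $R_{\fr{t}}=\kappa(R_M)$ and $\kappa(R_K)=0$, the two sets being disjoint.

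For part~(1), I would argue by contradiction. Suppose $\xi_i,\xi_j,\xi_k\in R_{\fr{t}}^+$. Then $\xi_i+\xi_j$ is a positive $\fr{t}$-root (use the lemma of Lemma~\ref{lem4} or directly the lift to $R_M$: there are $\al,\be\in R_M^+$ with $\kappa(\al)=\xi_i$, $\kappa(\be)=\xi_j$, and since $\xi_i+\xi_j\neq0$ there is a root $\al+\be\in R$ realizing it, hence $\al+\be\in R_M^+$ by~(\ref{geninv}), so $\xi_i+\xi_j\in R_{\fr{t}}^+$). Then the symmetric $\fr{t}$-triple relation forces $\xi_k=-(\xi_i+\xi_j)\in R_{\fr{t}}^-$, contradicting $\xi_k\in R_{\fr{t}}^+$. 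The case of three negative $\fr{t}$-roots follows by applying the previous case to $-\Xi=(-\xi_i,-\xi_j,-\xi_k)$, which is again a symmetric $\fr{t}$-triple by the discussion preceding this lemma.

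For part~(2), again suppose for contradiction that $\Xi$ contains a $\fr{t}$-root and its negative; after permuting the three entries (the symmetric triple relation is $S_3$-invariant) we may assume $\xi_j=-\xi_i$. Then $0=\xi_i+\xi_j+\xi_k=\xi_k$, so $\xi_k=0$; but $0\notin R_{\fr{t}}$, a contradiction. This is the whole argument for~(2).

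The only mild subtlety — the ``hard part,'' such as it is — is making sure that when $\xi_i,\xi_j\in R_{\fr{t}}^+$ and $\xi_i+\xi_j\neq 0$ we really do land in $R_{\fr{t}}^+$ rather than merely in $R_{\fr{t}}$; this is exactly where one must invoke that $\xi_i+\xi_j$ is actually a $\fr{t}$-root (it is, because $\xi_k=-(\xi_i+\xi_j)\in R_{\fr{t}}$ by hypothesis, and $R_{\fr{t}}=-R_{\fr{t}}$), together with closedness of $R_{\fr{t}}^+$ from Remark~\ref{invord2}. Alternatively one can sidestep this by lifting via Lemma~\ref{lem4} to roots $\al,\be,\gamma\in R_M$ with $\al+\be+\gamma=0$ and arguing directly in $R_M^+$ using Definition~\ref{invord}. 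Either route is a few lines; no genuine obstacle arises.
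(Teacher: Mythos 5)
Your proposal is correct and follows essentially the same route as the paper: for part (1) both arguments lift the triple to roots $\al,\be,\gamma\in R_M^+$ via Lemma~\ref{lem4} and derive a contradiction with the closedness of the invariant ordering (Definition~\ref{invord}(ii) / relation~(\ref{geninv})), and your closing remark correctly identifies that the lift is the step that makes $\xi_i+\xi_j\in R_{\fr{t}}^+$ legitimate. For part (2) you argue directly in $\fr{t}^*$ that $\xi_k=0\notin R_{\fr{t}}$, which is a slightly cleaner rendering of the paper's argument (the paper lifts to roots and concludes $\gamma=0\notin R_M$), but it is the same triviality.
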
  
   
   \begin{proof}
    (1) We will give a proof     for positive $\fr{t}$-roots,  and for  negative $\fr{t}$-roots it is similar.  Let $\Xi=(\xi, \eta, \zeta)$ be a symmetric $\fr{t}$-triple    such that $\xi, \eta, \zeta\in R_{\fr{t}}^{+}$.  By Lemma  \ref{lem4}, there   exist roots $\al, \be, \gamma\in R_{M}$ with   $\kappa(\al)=\xi, \kappa(\be)=\eta$ and $\kappa(\gamma)=\zeta$, such that $\al+\be+\gamma=0$.  But since $\xi, \eta, \zeta\in R_{\fr{t}}^{+}$ and $R_{\fr{t}}^{+}=\kappa(R_{M}^{+})$ it must be  $\al, \be, \gamma\in R_{M}^{+}$. Since $\al+\be+\gamma=0$ we get that $\al+\be=-\gamma\in R_{M}^{-}$, a contradiction.
   
  (2) Let  $\Xi=(\xi, \eta, \zeta)$ be  a symmetric $\fr{t}$-triple such that $\eta=-\xi$.  Then, 
because $\kappa(\pm\al)=\pm\xi$ for some $\al\in R_{M}$, it must be $\be=-\al$  in the relation $\al+\be+\gamma=0$, where  the complementary roots  $\be, \gamma\in R_{M}$ are such that  $\kappa(\be)=\eta$ and $\kappa(\gamma)=\zeta$.  Thus  we conclude that $\gamma=0$, which is   a contradiction since $\gamma\in R_{M}$.  Similarly   all possible combinations  are treated.   
     \end{proof}
   \begin{corol}\label{new}
 Let $M=G/K$ be a generalized flag manifold of a compact simple Lie group $G$  and let $R_{\fr{t}}$ be the  associated $\fr{t}$-root  system.  Assume that  $\fr{m}=\oplus_{i=1}^{s}\fr{m}_{i}$ is an $( \cdot , \cdot )$-orthogonal decomposition of $\fr{m}$ into pairwise inequivalent irreducible $\ad(\fr{k})$-modules, and let $\xi_{i}, \xi_{j}, \xi_{k}\in R_{\fr{t}}$ be the $\fr{t}$-roots associated to the components $\fr{m}_{i}$, $\fr{m}_{j}$ and $\fr{m}_{k}$, respectively,  Then,  $c_{ij}^{k}=\displaystyle{ k \brack ij}\neq 0$, if and only if  $(\xi_{i}, \xi_{j}, \xi_{k})$ is a symmetric $\fr{t}$-triple, i.e.  $\xi_{i}+ \xi_{j}+\xi_{k}=0$.  
   \end{corol}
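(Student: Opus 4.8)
The plan is to reduce this corollary directly to Theorem~\ref{de} together with the definition of the structure constants $c_{ij}^{k}$. First I would recall that $c_{ij}^{k}\geq 0$ and that, by the discussion following equation~(\ref{ijk}), $c_{ij}^{k}=0$ if and only if $([\fr{m}_{i},\fr{m}_{j}],\fr{m}_{k})=0$; equivalently, $c_{ij}^{k}\neq 0$ if and only if $\varphi([\fr{m}_{i},\fr{m}_{j}],\fr{m}_{k})\neq 0$, since $(\cdot,\cdot)=-\varphi(\cdot,\cdot)$. Thus the condition $c_{ij}^{k}\neq 0$ is literally the defining condition for $(i,j,k)\in\mathcal{T}$ in~(\ref{T}).

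Next I would invoke Theorem~\ref{de}, which states precisely that $(i,j,k)\in\mathcal{T}$ if and only if $\xi_{i}+\xi_{j}+\xi_{k}=0$ with $\xi_{i},\xi_{j},\xi_{k}\in R_{\fr{t}}$ the $\fr{t}$-roots attached to $\fr{m}_{i},\fr{m}_{j},\fr{m}_{k}$, i.e.\ if and only if $(\xi_{i},\xi_{j},\xi_{k})$ is a symmetric $\fr{t}$-triple in the sense of Definition~\ref{ttriples}. Chaining the two equivalences gives the claim: $c_{ij}^{k}\neq 0 \iff (i,j,k)\in\mathcal{T} \iff \xi_{i}+\xi_{j}+\xi_{k}=0$.

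There is essentially no obstacle here; the only point that deserves a sentence of care is the equivalence $c_{ij}^{k}\neq 0 \Leftrightarrow \varphi([\fr{m}_{i},\fr{m}_{j}],\fr{m}_{k})\neq 0$. Since $c_{ij}^{k}=\sum\big(([e_{\al},e_{\be}],e_{\gamma})\big)^{2}$ is a sum of squares over $(\cdot,\cdot)$-orthonormal bases of $\fr{m}_{i},\fr{m}_{j},\fr{m}_{k}$, it vanishes exactly when every term $([e_{\al},e_{\be}],e_{\gamma})$ vanishes, i.e.\ exactly when $([\fr{m}_{i},\fr{m}_{j}],\fr{m}_{k})=0$; and $([\fr{m}_{i},\fr{m}_{j}],\fr{m}_{k})=0$ is equivalent to $\varphi([\fr{m}_{i},\fr{m}_{j}],\fr{m}_{k})=0$ because $\varphi=-(\cdot,\cdot)$ is (up to sign) the same bilinear form restricted to $\fr{m}$. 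Once this identification is made explicit, the corollary follows immediately from Theorem~\ref{de}, and no further computation is required. Optionally, one may also remark (using Lemma~\ref{restriction}) that whenever such a symmetric $\fr{t}$-triple exists, the three $\fr{t}$-roots are automatically not all of the same sign and pairwise non-opposite, so the characterization is consistent with the statement of Theorem~\ref{de}.
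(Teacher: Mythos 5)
Your proposal is correct and follows exactly the paper's own argument: the paper likewise observes that by the definitions (\ref{T}) and (\ref{ijk}) one has $c_{ij}^{k}\neq 0$ if and only if $(i,j,k)\in\mathcal{T}$, and then cites Theorem~\ref{de}. Your extra sentence justifying the sum-of-squares vanishing criterion is a welcome (if routine) elaboration of a step the paper calls ``obvious.''
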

  \begin{proof}
 If $c_{ij}^{k}\neq 0$,  for some indices $i, j, k\in\{1, \ldots, s\}$, then $\varphi([\fr{m}_{i}, \fr{m}_{j}], \fr{m}_{k})\neq 0$. We will prove that $\xi_{i}+\xi_{j}+\xi_{k}=0$.  For any   $\fr{m}_{i}=(\fr{m}_{\xi_{i}}\oplus\fr{m}_{-\xi_{i}})^{\tau}$ we choose an orthogonal basis $\{u_{i}\}=\{A_{\al}, B_{\al} : \al\in R_{M}^{+}, \kappa(\al)=\xi_{i}\in R_{\fr{t}}^{+}\}$. Then, from  Remark \ref{invord2} we know that  for any  $e_{\al}\in \{v_{i}\}$ and $e_{\be}\in\{v_{j}\}$, it is     $[e_{\al}, e_{\be}]=e_{\al+\be}\in\{v_{i+j}\}$,  unless   $\al+\be=0$.  Moreover,    by  Corollary \ref{rootspaces} (1), it follows that for some $e_{\gamma}\in\{v_{k}\}$ it is $\phi([e_{\al}, e_{\be}], e_{\gamma})=0$,  unless   $\al+\be+\gamma=0$.  However, it is $\varphi([\fr{m}_{i}, \fr{m}_{j}], \fr{m}_{k})\neq 0$ and thus it must be $\al+\be\neq 0$ and    $\al+\be+\gamma=0$.  Thus  
  	 $
  	 \xi_{i}+\xi_{j}+\xi_{k}=\kappa(\al)+\kappa(\be)+\kappa(\gamma)=\kappa(\al+\be+\gamma)=0.
  $   The converse  is a trivial consequence of Lemma \ref{lem4}. 
  \end{proof}

    \markboth{Ioannis Chrysikos}{Flag manifolds, symmetric $\fr{t}$-triples and Einstein metrics}
      \section{Symmetric $\fr{t}$-triples for certain classes of flag manifolds}
    \markboth{Ioannis Chrysikos}{Flag manifolds, symmetric $\fr{t}$-triples and Einstein metrics}
 Important classes of generalized flag manifolds $M=G/K$ for which one can gives  general expressions of symmetric $\fr{t}$-triples are for instance, the flag manifolds $G/K$ with second  Betti number $b_{2}(G/K)=1$, and the  full flag manifolds $G/T$ (which are such that $b_{2}(G/T)=\ell=\rnk G$). We start with the first family.

 \subsection{Symmetric $\fr{t}$-triples on flag manifolds with second Betti number equal to $1$} 
 Let $M=G/K$ be a generalized flag manifold of a compact simple Lie group $G$, defined by a subset $\Pi_{M}=\{\al_{i}\}\subset\Pi$. Then, it is $\dim_{\bb{R}}\fr{t}=|\Pi_{M}|=1$, thus $M$ is such that $b_{2}(G/K)=1$. In particular, any flag manifold with second Betti number equal to 1, is defined in this way.  Recall that the {\it height} of a simple root $\al_{i}\in\Pi$  $(i=1, \ldots, \ell)$, is  the  positive integer   $m_{i}$ in 
the expression of the highest root $\widetilde{\al}=\sum_{k=1}^{\ell}m_{k}\al_{k}$ in terms of simple roots.  We will denote by $\Hgt : \Pi\to\bb{Z}^{+}$  
      the function which associates to each simple root its height, that is $\Hgt(\al_{i})=m_{i}$.
 
      \begin{theorem}\label{chrysik1}
     Let $M=G/K$ be a generalized flag manifold of a compact simple Lie group $G$,    defined by a subset $\Pi_{M}=\{\al_{i}\}$  where the fixed simple root $\al_{i}\in\Pi$ is such that $\Hgt(\al_{i})=r\geq 2$.  Let $R_{\fr{t}}$ be the associated $\fr{t}$-root system and $\Pi_{\fr{t}}$ the corresponding $\fr{t}$-basis.  Then, given $\xi\in R_{\fr{t}}^{+}$ such that $\xi\notin\Pi_{\fr{t}}$, it is $\xi-\overline{\al}_{i}\in R_{\fr{t}}^{+}$, where $\overline{\al}_{i}=\kappa(\al_{i})=\al_{i}|_{\fr{t}}\in\Pi_{\fr{t}}$. 
      \end{theorem}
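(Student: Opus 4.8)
\emph{Proof proposal.} The plan is to reduce the statement to a purely combinatorial fact about the coefficient of $\al_i$ in positive roots, and then read off the conclusion. Since $\Pi_{M}=\{\al_{i}\}$, Proposition~\ref{tbasis} gives $\Pi_{\fr{t}}=\{\overline{\al}_{i}\}$, and formula (\ref{troots}) shows that for a positive root $\al=\sum_{k}f_{k}\al_{k}$ one has $\kappa(\al)=f_{i}\,\overline{\al}_{i}$; as $0\le f_{i}\le m_{i}=\Hgt(\al_{i})=r$, every positive $\fr{t}$-root has the form $m\,\overline{\al}_{i}$ with $1\le m\le r$. Hence it suffices to prove that
\[
R_{\fr{t}}^{+}=\{\,\overline{\al}_{i},\,2\overline{\al}_{i},\,\dots,\,r\overline{\al}_{i}\,\},
\]
equivalently, that for every $m\in\{1,\dots,r\}$ there exists a complementary root whose $\al_{i}$-coefficient equals $m$. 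Granting this, if $\xi\in R_{\fr{t}}^{+}$ with $\xi\notin\Pi_{\fr{t}}$, then $\xi=m\,\overline{\al}_{i}$ for some $2\le m\le r$, so $\xi-\overline{\al}_{i}=(m-1)\overline{\al}_{i}\in R_{\fr{t}}^{+}$ because $1\le m-1\le r-1$.

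I would prove the realization of every intermediate coefficient by downward induction on $m$. For $m=r$ the highest root $\widetilde{\al}=\sum_{k}m_{k}\al_{k}$ has $\al_{i}$-coefficient $m_{i}=r$. For the inductive step, suppose $2\le m\le r$ and some positive root has $\al_{i}$-coefficient $m$; choose such a root $\al$ of minimal height in $R$. Since its $\al_{i}$-coefficient is $\ge 2$, $\al$ is not simple, and writing $\al=\sum_{k}c_{k}\al_{k}$ with $c_{k}\ge 0$ we have $\varphi(\al,\al)=\sum_{k}c_{k}\varphi(\al,\al_{k})>0$, so $\varphi(\al,\al_{l})>0$ for some simple root $\al_{l}$; since $\al\neq\pm\al_{l}$ this forces $\al-\al_{l}\in R$, and because the $\al_{i}$-coefficient of $\al$ is $\ge 2>1$ (and $i\neq l$ unless already done) this difference is again a positive root. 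If $l=i$, then $\al-\al_{i}$ is a positive root with $\al_{i}$-coefficient $m-1\ge 1$, hence a complementary root, and we are done. If $l\neq i$, then $\al-\al_{l}$ is a positive root with the \emph{same} $\al_{i}$-coefficient $m$ but strictly smaller height, contradicting the minimality of $\al$. This establishes the claim, and with it the theorem.

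I expect the only genuine content to be the induction step; everything else is bookkeeping with (\ref{troots}), Proposition~\ref{tbasis} and the defining splitting $\Pi=\Pi_{K}\sqcup\Pi_{M}$. The two mild points to handle carefully are (i) checking that $\al-\al_{l}$ remains positive, which is where the hypothesis that the $\al_{i}$-coefficient is at least $2$ is used, and (ii) running the minimal-height argument over the finite nonempty set of roots with a prescribed $\al_{i}$-coefficient rather than trying to descend directly from the root attached to the given $\xi$ (for a single such root, as the $G_{2}$ example with $\Pi_{M}=\{\al_{1}\}$ shows, $\al-\al_{i}$ need not be a root, so some choice is necessary).
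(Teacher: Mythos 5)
Your proof is correct, and it follows the same reduction as the paper: both arguments use Proposition~\ref{tbasis} and formula~(\ref{troots}) to see that every positive $\fr{t}$-root is of the form $k\,\overline{\al}_{i}$ with $1\le k\le r$, and then derive the theorem from the identification $R_{\fr{t}}^{+}=\{\overline{\al}_{i},2\overline{\al}_{i},\dots,r\overline{\al}_{i}\}$. The difference is that the paper simply asserts this identification, whereas the only nontrivial part of it is the surjectivity statement you isolate --- that \emph{every} value $m\in\{1,\dots,r\}$ of the $\al_{i}$-coefficient is realized by some positive root --- and your downward induction via a minimal-height representative (using $\varphi(\al,\al)>0$ to produce a simple root $\al_{l}$ with $\al-\al_{l}\in R^{+}$, and minimality to force $l=i$) is a complete and correct proof of exactly the step the paper leaves implicit. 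So your write-up buys a self-contained argument where the paper relies on a standard but unproved fact about coefficients of positive roots. One small quibble with your closing aside: under the paper's labelling of $G_{2}$ (Example~\ref{g2t}, where $\widetilde{\al}=2\al_{1}+3\al_{2}$), the case $\Pi_{M}=\{\al_{1}\}$ does not illustrate the failure you describe, since the unique positive root with $\al_{1}$-coefficient $2$ is $2\al_{1}+3\al_{2}$ and $\al_{1}+3\al_{2}$ \emph{is} a root; the intended phenomenon occurs for $\Pi_{M}=\{\al_{2}\}$, where $(2\al_{1}+3\al_{2})-\al_{2}=2\al_{1}+2\al_{2}\notin R$ while the other preimage $\al_{1}+3\al_{2}$ of $3\overline{\al}_{2}$ does admit the subtraction. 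This does not affect the proof, which correctly runs the minimal-height argument over all roots with a prescribed coefficient rather than over a single preimage of $\xi$.
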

      \begin{proof}
     According to \cite{Chry2}, a $\fr{t}$-basis is given by  $\Pi_{\fr{t}}=\{\overline{\al}_{i}\}$ and thus $\fr{t}^{*}=\bb{R}\overline{\al}_{i}$, where   $\overline{\al}_{i}=\kappa(\al_{i})=\al_{i}|_{\fr{t}}$.   However, by assumption it is $\Hgt(\al_{i})=r\geq 2$, thus   $|R_{\fr{t}}^{+}|=r\geq 2$.  Indeed, let $\al=\sum_{j=1}^{\ell}k_{j}\al_{j}\in R^{+}$, where the non-negative coefficients $k_{j}$ are such that $k_{j}\leq m_{j}=\Hgt(\al_{j})$ for any $j=1, \ldots, \ell$.  Then we have
     $\kappa(\al)=k_{i}\overline{\al}_{i}$, with $1\leq k_{i}\leq m_{i}=r$, which means that  
    $ R_{\fr{t}}^{+}=\{\overline{\al}_{i}, 2\overline{\al}_{i}, \ldots, r\overline{\al}_{i}\}$.
   Thus we obtain an irreducible decomposition  $\fr{m}=\fr{m}_1\oplus\cdots\oplus\fr{m}_{r}$, where each summand $\fr{m}_{k}$ is given by (\ref{bas})  for any $1\leq k\leq r$.      Let now $\xi\in R_{\fr{t}}^{+}$ such that $\xi\neq \overline{\al}_{i}$.  Then, it is $\xi=p\overline{\al}_{i}$ with $2\leq p\leq r$ and so 
    $
     \xi-\overline{\al}_{i}=p\overline{\al}_{i}-\overline{\al}_{i}=(p-1)\overline{\al}_{i}\in R_{\fr{t}}^{+},
    $
   which proves our claim.   For $p=2$ we have $\xi-\overline{\al}_{i}=\overline{\al}_{i}\in R_{\fr{t}}^{+}$, while for $p=r$, it is $\xi-\overline{\al}_{i}=(r-1)\overline{\al}_{i}\in R_{\fr{t}}^{+}$, since $r\overline{\al}_{i}\in R_{\fr{t}}^{+}$.
      \end{proof}
     Theorem \ref{chrysik1} generalizes   the following well known theorem of root systems theory,  to the  $\fr{t}$-root system $R_{\fr{t}}$ corresponding to a flag manifold $M=G/K$ with $b_{2}(G/K)=1$.
      \begin{lemma}\label{carter}\textnormal{(\cite[p.~ 460]{Hel})}
      Let $R$ be the root system of a complex semisimple Lie algebra. Choose a basis $\Pi=\{\al_1, \ldots, \al_{\ell}\}$ and let $R^{+}$ be the induced ordering in $R$. If $\al\in R^{+}$ such that $\al\notin\Pi$, then there exists some $\al_{i}\in\Pi$ such that $\al-\al_{i}\in R^{+}$.
      \end{lemma}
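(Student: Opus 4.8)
The plan is to run the classical inner-product argument. Fix a Weyl-invariant positive definite inner product $\varphi$ on the real span of $R$ (for instance the Killing form, which is positive definite there). Since $\al\in R^{+}\setminus\Pi$, write $\al=\sum_{j=1}^{\ell}k_{j}\al_{j}$ with all $k_{j}\in\bb{Z}_{\geq 0}$ and $\Hgt(\al)=\sum_{j}k_{j}\geq 2$. I would then expand $0<\varphi(\al,\al)=\sum_{j}k_{j}\varphi(\al,\al_{j})$; since the left side is strictly positive and each $k_{j}\geq 0$, there must exist an index $i$ with $k_{i}>0$ and $\varphi(\al,\al_{i})>0$. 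This $\al_{i}\in\Pi$ is the candidate.

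Next I would invoke the standard structural fact that if $\be,\gamma\in R$ satisfy $\varphi(\be,\gamma)>0$ and $\be\neq\gamma$, then $\be-\gamma\in R$: the $\gamma$-string through $\be$, say $\be-p\gamma,\ldots,\be+q\gamma$, satisfies $p-q=2\varphi(\be,\gamma)/\varphi(\gamma,\gamma)>0$, so $p\geq 1$ and $\be-\gamma$ lies in this string. Applying this with $\be=\al$ and $\gamma=\al_{i}$ — legitimate because $\al\neq\al_{i}$ since $\al\notin\Pi$ — shows that $\al-\al_{i}$ is a root.

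It remains to pin down the sign of the root $\al-\al_{i}$. Its coordinates with respect to $\Pi$ are $k_{j}\geq 0$ for $j\neq i$ and $k_{i}-1$ in the $i$-th slot. Since every root has all of its simple-root coordinates of one and the same sign, it suffices to exhibit a single strictly positive coordinate: if $k_{j}>0$ for some $j\neq i$ this is immediate; otherwise $\al=k_{i}\al_{i}$, and then $\Hgt(\al)\geq 2$ forces $k_{i}\geq 2$, so the $i$-th coordinate $k_{i}-1\geq 1$ is positive. In either case $\al-\al_{i}\in R^{+}$, as claimed. The argument has no deep obstacle: the only non-combinatorial input is the root-string lemma $\varphi(\be,\gamma)>0\Rightarrow\be-\gamma\in R$, which rests on the $\fr{sl}_{2}$-representation theory governing root strings, and the one place requiring mild care is the final sign check, where the hypothesis $\al\notin\Pi$ is used precisely to rule out the degenerate case $\al=k_{i}\al_{i}$.
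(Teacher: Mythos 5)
Your argument is correct and complete: the positivity of $\varphi(\al,\al)$ forces some simple root $\al_{i}$ with $k_{i}>0$ and $\varphi(\al,\al_{i})>0$, the root-string lemma then gives $\al-\al_{i}\in R$, and the sign check is handled properly (including the degenerate case $\al=k_{i}\al_{i}$, which in fact cannot occur in a reduced root system but is harmlessly covered). The paper itself gives no proof of this lemma, citing Helgason instead; your proof is precisely the standard textbook argument being cited, so there is nothing to compare beyond noting that you have supplied the omitted details correctly.
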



      \begin{corol}\label{betti1}
      Let $M=G/K$ be a  flag manifold with $b_{2}(M)=1$ and let $R_{\fr{t}}$ be its $\fr{t}$-root system. If $|R_{\fr{t}}^{+}|\geq 2$,  then any $\fr{t}$-root $\xi\in R_{\fr{t}}$  belongs to a symmetric $\fr{t}$-triple, i.e. we can find $\zeta, \eta\in R_{\fr{t}}$ such that $\xi+\zeta+\eta=0$.    
      \end{corol}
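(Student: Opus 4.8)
The plan is to derive everything from the explicit one-dimensional description of $R_{\fr{t}}$ established in (the proof of) Theorem~\ref{chrysik1}. First I would note that the hypothesis $b_{2}(M)=1$ means, by Remark~\ref{betti} and Proposition~\ref{dimcent}, that $|\Pi_{M}|=1$, say $\Pi_{M}=\{\al_{i}\}$, so $\Pi_{\fr{t}}=\{\overline{\al}_{i}\}$ and $\fr{t}^{*}=\bb{R}\overline{\al}_{i}$ by Proposition~\ref{tbasis}. The extra assumption $|R_{\fr{t}}^{+}|\geq 2$ forces $\Hgt(\al_{i})=r\geq 2$; this is precisely the computation carried out at the start of the proof of Theorem~\ref{chrysik1}, which also yields
\[
R_{\fr{t}}^{+}=\{\overline{\al}_{i},\,2\overline{\al}_{i},\,\ldots,\,r\overline{\al}_{i}\},\qquad R_{\fr{t}}=\{\pm k\overline{\al}_{i}: 1\leq k\leq r\}.
\]

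Next I would reduce to the case of a positive $\fr{t}$-root. If $\xi\in R_{\fr{t}}^{-}$, write $\xi=-\xi'$ with $\xi'\in R_{\fr{t}}^{+}$; once $\xi'$ is shown to lie in a symmetric $\fr{t}$-triple $\Xi=(\xi',\zeta,\eta)$, the triple $-\Xi=(-\xi',-\zeta,-\eta)$ is again symmetric (as recorded after Definition~\ref{ttriples}) and contains $\xi$. So I may assume $\xi=p\overline{\al}_{i}$ with $1\leq p\leq r$, and I would then split on whether $\xi$ is the simple $\fr{t}$-root.

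If $p\geq 2$, i.e.\ $\xi\notin\Pi_{\fr{t}}$, Theorem~\ref{chrysik1} gives $\xi-\overline{\al}_{i}=(p-1)\overline{\al}_{i}\in R_{\fr{t}}^{+}$; since also $-\overline{\al}_{i}\in R_{\fr{t}}$ and $-(p-1)\overline{\al}_{i}\in R_{\fr{t}}$, and
\[
\xi+\bigl(-\overline{\al}_{i}\bigr)+\bigl(-(p-1)\overline{\al}_{i}\bigr)=p\overline{\al}_{i}-\overline{\al}_{i}-(p-1)\overline{\al}_{i}=0,
\]
the $\fr{t}$-root $\xi$ lies in the symmetric $\fr{t}$-triple $\bigl(\xi,-\overline{\al}_{i},-(p-1)\overline{\al}_{i}\bigr)$. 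If $p=1$, i.e.\ $\xi=\overline{\al}_{i}$, then $2\overline{\al}_{i}\in R_{\fr{t}}^{+}$ because $r\geq 2$, and $\overline{\al}_{i}+\overline{\al}_{i}+(-2\overline{\al}_{i})=0$ exhibits $\xi$ inside the symmetric $\fr{t}$-triple $(\overline{\al}_{i},\overline{\al}_{i},-2\overline{\al}_{i})$. This covers all $\fr{t}$-roots, which proves the corollary.

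Since the genuinely structural input --- that $R_{\fr{t}}$ is the set of integer multiples $\pm k\overline{\al}_{i}$, $1\leq k\leq r$ --- is already provided by Theorem~\ref{chrysik1}, I do not expect a real obstacle; the only points needing attention are the two boundary values. At the top end $p=r$ one cannot use $\xi+\overline{\al}_{i}$ (it is not a $\fr{t}$-root), which is why the construction above pairs $\xi$ with two negative $\fr{t}$-roots; and the simple $\fr{t}$-root $p=1$ falls outside the scope of Theorem~\ref{chrysik1} and must be handled by the ad hoc triple $(\overline{\al}_{i},\overline{\al}_{i},-2\overline{\al}_{i})$, which is available exactly because $|R_{\fr{t}}^{+}|\geq 2$. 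Alternatively, one could avoid invoking Theorem~\ref{chrysik1} and argue directly from the displayed description of $R_{\fr{t}}$, using the triple $\bigl(p\overline{\al}_{i},\overline{\al}_{i},-(p+1)\overline{\al}_{i}\bigr)$ when $p<r$ and $\bigl(r\overline{\al}_{i},-\overline{\al}_{i},-(r-1)\overline{\al}_{i}\bigr)$ when $p=r$.
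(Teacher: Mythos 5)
Your proposal is correct and follows essentially the same route as the paper: it invokes Theorem~\ref{chrysik1} (equivalently the explicit description $R_{\fr{t}}^{+}=\{\overline{\al}_{i},\ldots,r\overline{\al}_{i}\}$) to produce the Type~A triple $\pm(\overline{\al}_{i},\xi-\overline{\al}_{i},-\xi)$, handles the simple $\fr{t}$-root via $(\overline{\al}_{i},\overline{\al}_{i},-2\overline{\al}_{i})$, and reduces negative $\fr{t}$-roots to positive ones by negating the triple, exactly as the paper does. Your explicit treatment of the boundary cases $p=1$ and $p=r$ is if anything slightly more careful than the paper's own wording.
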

      \begin{proof}
Assume that $M$ is defined by the subset $\Pi_{M}=\{\al_{i}\}$, for some fixed $i\in\{1, \ldots, \ell\}$.  Let $\xi\in R_{\fr{t}}^{+}$. Then from Theorem \ref{chrysik1} it is $\xi-\overline{\al}_{i}\in R_{\fr{t}}^{+}$, where $\overline{\al}_{i}$ is the unique element of the associated $\fr{t}$-basis $\Pi_{\fr{t}}$. But then $(\overline{\al}_{i}, \xi-\overline{\al}_{i}, -\xi)$ is a symmetric $\fr{t}$-triple since $\overline{\al}_{i}+ (\xi-\overline{\al}_{i}) + (-\xi)=0$ and $\overline{\al}_{i}, \xi-\overline{\al}_{i}\in R_{\fr{t}}^{+}$, and $-\xi\in R_{\fr{t}}^{-}$.
  Thus,  a general form of symmetric $\fr{t}$-triples on $M=G/K$   is 
   \begin{equation}\label{b21}
     \pm(\overline{\al}_{i}, \xi-\overline{\al}_{i}, -\xi), \quad \xi\in R_{\fr{t}}^{+}, 
     \end{equation}
    We call a symmetric $\fr{t}$-triple of the form (\ref{b21}),  a {\it symmetric $\fr{t}$-triple  of  Type A}.  
    Note that if $\xi\in\Pi_{\fr{t}}\subset R_{\fr{t}}^{+}$, i.e. $\xi=\overline{\al}_{i}$, then since by assumption it is $b_{2}(M)=1$ and $|R_{\fr{t}}^{+}|\geq 2$, the set $R_{\fr{t}}^{+}$ must contain   the $\fr{t}$-roots $\xi=\overline{\al}_{i}$ and $2\xi=2\overline{\al}_{i}$.  Thus a symmetric $\fr{t}$-triple  which contains $\overline{\al}_{i}$ is given by 
   $\pm \big(\overline{\al}_{i}, \overline{\al}_{i}, -2\overline{\al}_{i}\big)$.  But this is also a symmetric $\fr{t}$-triple of Type A since it is obtained from (\ref{b21}) for $\xi=2\overline{\al}_{i}$.  Now,   if $\xi\in R_{\fr{t}}^{-}$, then $-\xi\in R_{\fr{t}}^{+}$, and thus in order to obtain the symmetric $\fr{t}$-triple which contains  $-\xi$ we have to replace in   (\ref{b21})  $\xi$ by $-\xi$. In this case the desired  $\fr{t}$-triple is given by 
  $
   \pm(\overline{\al}_{i}, -\xi-\overline{\al}_{i}, \xi),
 $
   which proves our claim.
       \end{proof}
    
    In the following, for a given flag manifold $G/K$ with $\fr{t}$-root system $R_{\fr{t}}$,  we will denote by $\mathcal{S}$ the set of inquivalent symmetric $\fr{t}$-triples in $R_{\fr{t}}\subset\fr{t}^{*}$.
    \begin{theorem}\label{height1}
    The only flag manifolds $M=G/K$ of a compact simple Lie group $G$, for which the set $\mathcal{S}$ is empty, are the compact isotropy irreducible Hermitian symmetric spaces.
    \end{theorem}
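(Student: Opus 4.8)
The statement is an equivalence: $\mathcal{T}=\emptyset$ if and only if $M=G/K$ is a compact isotropy-irreducible Hermitian symmetric space. By Theorem \ref{de}, $\mathcal{T}=\emptyset$ means precisely that no three $\fr{t}$-roots (not all of the same sign) sum to zero; equivalently, in view of Definition \ref{ttriples}, $R_{\fr{t}}$ contains no symmetric $\fr{t}$-triple. The plan is to translate this condition into a combinatorial statement about $|R_{\fr{t}}^{+}|$ and then invoke the structural results already established. The first direction is the easier one: if $M$ is an isotropy-irreducible Hermitian symmetric space, then $\fr{m}$ is a single irreducible $\ad(\fr{k})$-module, so $s=1$, hence $|R_{\fr{t}}^{+}|=1$ and $R_{\fr{t}}=\{\xi,-\xi\}$ for a single positive $\fr{t}$-root $\xi$; then the only possible sums of three $\fr{t}$-roots not all of the same sign are $\xi+\xi+(-\xi)=\xi\neq 0$ and $\xi+(-\xi)+(-\xi)=-\xi\neq 0$, so no symmetric $\fr{t}$-triple exists and $\mathcal{T}=\emptyset$ by Theorem \ref{de}.

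For the converse, I would argue contrapositively: if $M$ is not an isotropy-irreducible Hermitian symmetric space, I must produce a symmetric $\fr{t}$-triple. First recall (Section 1) that $M$ is Hermitian symmetric exactly when $b_{2}(M)=1$ and the associated $\fr{t}$-root system satisfies $R_{\fr{t}}^{+}=\{\xi\}$, i.e.\ $|R_{\fr{t}}^{+}|=1$; and since $b_{2}(M)=|\Pi_{M}|$ (Remark \ref{betti}), the negation splits into two cases. \textbf{Case 1: $b_{2}(M)=1$ but $|R_{\fr{t}}^{+}|=|\Pi_{M}\text{-height}|\geq 2$.} Here Corollary \ref{betti1} applies directly: every $\fr{t}$-root lies in a symmetric $\fr{t}$-triple (of Type A, $\pm(\overline{\al}_{i},\xi-\overline{\al}_{i},-\xi)$), so $\mathcal{T}\neq\emptyset$. \textbf{Case 2: $b_{2}(M)\geq 2$.} Then $\Pi_{M}$ contains at least two simple roots, say $\al_{i},\al_{j}$, with restrictions $\overline{\al}_{i},\overline{\al}_{j}\in\Pi_{\fr{t}}\subset R_{\fr{t}}^{+}$, and I want to find three $\fr{t}$-roots summing to zero. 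The cleanest route is to exhibit a root $\gamma\in R_{M}^{+}$ that involves both $\al_{i}$ and $\al_{j}$ with nonzero coefficient, so that $\kappa(\gamma)=\overline{\al}_{i}+\overline{\al}_{j}+(\text{possibly other terms in }\Pi_{\fr{t}})$ is a $\fr{t}$-root; then repeatedly strip off simple-root summands using Lemma \ref{carter} (or its $\fr{t}$-analogue) to split $\kappa(\gamma)$ as a sum of two positive $\fr{t}$-roots $\zeta+\eta$, giving the symmetric $\fr{t}$-triple $(\zeta,\eta,-\kappa(\gamma))$.

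\textbf{Main obstacle.} The delicate point in Case 2 is guaranteeing the existence of a connecting root $\gamma\in R_{M}$ whose restriction genuinely decomposes into two nonzero positive $\fr{t}$-roots — in other words, ruling out the degenerate possibility that every complementary root restricts to a single simple $\fr{t}$-root and no $\fr{t}$-root is a nontrivial sum. I expect to handle this via connectedness of the Dynkin diagram of the simple group $G$: since $\Gamma(\Pi)$ is connected, for any two black nodes there is a path in $\Gamma$ between them, and walking from one black node to the next adjacent node (white or black) one can build up, by successive applications of the addition rule for roots, a root $\gamma$ supported on a connected subdiagram meeting $\Pi_{M}$ in at least two points; its restriction to $\fr{t}$ is then a sum of at least two elements of $\Pi_{\fr{t}}$. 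Once such a $\gamma$ exists, $\kappa(\gamma)\in R_{\fr{t}}^{+}$ is not simple, so by the defining property of a $\fr{t}$-basis (the paragraph before Proposition \ref{tbasis}) and by peeling off one element of $\Pi_{\fr{t}}$ at a time — using the same reasoning as in Lemma \ref{carter}, adapted to $R_{\fr{t}}$ — we write $\kappa(\gamma)=\zeta+\eta$ with $\zeta,\eta\in R_{\fr{t}}^{+}$, and $(\zeta,\eta,-\kappa(\gamma))$ is the desired symmetric $\fr{t}$-triple. By Theorem \ref{de} this shows $\mathcal{T}\neq\emptyset$, completing the contrapositive and hence the proof. (One should also note at the outset the trivial observation that if $s=1$ and $M$ is irreducible isotropy but \emph{not} Hermitian — e.g.\ an irreducible symmetric space of non-Hermitian type — then it is not a flag manifold at all, so this case does not arise among the $G/K$ under consideration; the flag-manifold hypothesis forces the single-summand case to be exactly the Hermitian symmetric one.)
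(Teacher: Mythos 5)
Your proposal is correct in substance, but it proves considerably more than the paper's own proof does, so the comparison is worth spelling out. The paper's proof of Theorem \ref{height1} only writes out one implication: assuming $\Pi_{M}=\{\al_{i}\}$ with $\Hgt(\al_{i})=1$, it observes that $R_{\fr{t}}=\{\pm\overline{\al}_{i}\}$, that $M$ is therefore isotropy irreducible (and Hermitian symmetric of compact type), and that no symmetric $\fr{t}$-triple can be formed from the single pair $\pm\overline{\al}_{i}$, whence $\mathcal{T}=\emptyset$ by Theorem \ref{de}. The ``only'' direction --- that every other flag manifold has $\mathcal{T}\neq\emptyset$ --- is not argued inside that proof; the paper asserts it afterwards only for the $b_{2}(M)=1$ case via Corollary \ref{betti1} and leaves $b_{2}(M)\geq 2$ implicit. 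Your two-case contrapositive supplies exactly this missing half, and your Case 2 idea (use connectedness of the Dynkin diagram of the simple group $G$ to produce a root $\gamma\in R_{M}^{+}$ supported on a path joining two black nodes) is the right one.

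One step should be restated, though. You propose to split $\kappa(\gamma)$ into two positive $\fr{t}$-roots by ``peeling off one element of $\Pi_{\fr{t}}$ at a time, using the same reasoning as in Lemma \ref{carter} adapted to $R_{\fr{t}}$''. Since $R_{\fr{t}}$ is not a root system in general, Lemma \ref{carter} does not transfer for free; the paper itself only proves such $\fr{t}$-analogues in the special cases $b_{2}(M)=1$ (Theorem \ref{chrysik1}) and $\Pi_{K}=\emptyset$ (Theorem \ref{chrysik2}), neither of which covers a general $M$ with $2\leq b_{2}(M)<\ell$. The fix is to choose $\gamma$ so that no peeling is needed: take two black nodes $\al_{i},\al_{j}$ joined in $\Gamma(\Pi)$ by a path whose interior nodes are all white (such a pair exists by connectedness of the diagram of the simple group $G$), and let $\gamma$ be the sum of the simple roots along that path, which is a root because the path is a connected subdiagram. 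Then $\kappa(\gamma)=\overline{\al}_{i}+\overline{\al}_{j}$ exactly, so $\big(\overline{\al}_{i},\overline{\al}_{j},-(\overline{\al}_{i}+\overline{\al}_{j})\big)$ is already a symmetric $\fr{t}$-triple and $\mathcal{T}\neq\emptyset$ by Theorem \ref{de}. With that adjustment your argument is complete and, unlike the printed proof, establishes both directions of the theorem.
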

 \begin{proof}
 Let $M=G/K$ be a generalized flag manifold of a compact simple Lie group $G$ and let $\Pi=\{\al_1, \ldots, \al_{\ell}\}$ be a system of simple roots for $G$. Assume that $M$ is  defined by a subset $\Pi_{M}=\{\al_{i}\}\subset\Pi$, for some fixed $i\in\{1, \ldots, \ell\}$, such that  $\Hgt(\al_{i})=1$.
    Then $M=G/K$ is an isotropy irreducible Hermitian symmetric space of compact type  (cf.  \cite{Hel}, \cite{CS}).  Indeed, a $\fr{t}$-basis is given by $\Pi_{\fr{t}}=\{\overline{\al}_{i}\}$ where   $\overline{\al}_{i}=\kappa(\al_{i})=\al_{i}|_{\fr{t}}$.  Since $\Hgt(\al_{i})=1$, it follows that $R_{\fr{t}}=\{\pm\overline{\al}_{i}\}$ and $|R_{\fr{t}}^{+}|=1$, which means that $M$ is isotropy irreducible.  Due to the form of $R_{\fr{t}}$ it is obvious that we cannot construct a symmetric $\fr{t}$-triple which contains some of the $\fr{t}$-roots $\overline{\al}_{i}$ or $-\overline{\al}_{i}$.  Thus $\mathcal{S}=\emptyset$.
    \end{proof}
  
   Due to Theorem \ref{height1}, it is now clear   the condition $|R_{\fr{t}}^{+}|\geq 2$ has been assumed in  Theorem \ref{chrysik1} and Corollary \ref{betti1}.    In particular,  any flag manifold $M=G/K$ of a compact simple Lie group $G$ which is not an isotropy irreducible Hermitian symmetric space, is such that  $|R_{\fr{t}}^{+}|\geq 2$, and thus  there exists at least one symmetric $\fr{t}$-triple, of the form $(\xi, \zeta, -(\xi+\zeta))$, where $\xi\neq\zeta\in R_{\fr{t}}^{+}$ (and thus, at least one non-zero structure constant). 
  
  \begin{remark}\label{maxheight}
  \textnormal{The isotropy representation of   a flag manifold  $M=G/K$ of a compact  simple Lie group $G$ with $b_{2}(M)=1$, may have at most six isotropy summands.  This natural constraint comes from  the form of the highest root   $\widetilde{\al}$ corresponding to a  complex semisimple Lie algebra  (cf. \cite[p. ~477]{Hel}).  For example, a flag manifold $M=G/K$ with $b_{2}(M)=1$ and $G\in\{\SU(\ell+1), \SO(2\ell+1), \Sp(\ell), \SO(2\ell)\}$, could be either an isotropy irreducible Hermitian symmetric space  of compact type, or a generalized flag manifold with two isotropy summands. This is because the heights of the simple roots  for classical simple Lie groups are not  greater than two.    Moreover, the only simple Lie group whose   root system contains  simple  roots  $\al_{i}$ with $\Hgt(\al_{i})=5$, or $\Hgt(\al_{i})=6$ is $G=\E_8$, and only for this Lie group we can determine  flag manifolds $M=G/K$ with $b_{2}(M)=1$ and  five or six isotropy summands.  For more details we refer to \cite{CS} (see also Example \ref{height5}).} 
  \end{remark}

    \begin{remark}\label{bet1}
    \textnormal{Since the $\fr{t}$-root system of flag manifold $M=G/K$ as in Corrollary \ref{betti1}, is given by  $R_{\fr{t}}^{+}=\{\overline{\al}_{i}, 2\overline{\al}_{i}, \ldots, r\overline{\al}_{i}\}$, $r\geq 2$, another symmetric $\fr{t}$-triple  is given by
   \begin{equation}\label{b212}
    \pm(p\overline{\al}_{i}, q\overline{\al}_{i}, -(p+q)\overline{\al}_{i}), 
   \end{equation}
   where $2\leq p, q\leq r$ such that $4\leq p+q\leq r$.  Since $p, q\geq 2$, it is obvious that these symmetric $\fr{t}$-triples are inequivalent to the symmetric $\fr{t}$-triples of Type A, and thus we will call them     {\it symmetric $\fr{t}$-triples of  Type B}.  Note that symmetric $\fr{t}$-triples of Type A and B are the only possible symmetric $\fr{t}$-triples which one can construct for a flag manifold $M=G/K$ with $b_{2}(M)=1$, that is $\mathcal{S}=\{(\xi, \eta, \zeta) \ \text{of Type A or B}\}$.  By Remark \ref{maxheight}  it follows that  symmetric $\fr{t}$-triples of Type B     only exist for the values $r=4, 5, 6$, and they are given by:}
  \end{remark}
    
  \begin{center}
  {\bf Table 1.} \ {\small Symmetric $\fr{t}$-triples of Type B}
 \end{center}
 \smallskip
  \begin{center}
     \begin{tabular}{ccc}
     \hline 
         $r=4$ & $r=5$ & $r=6$ \\
    \thickline 
  $\pm(2\overline{\al}_{i}, 2\overline{\al}_{i}, -4\overline{\al}_{i})$ & $\pm(2\overline{\al}_{i}, 2\overline{\al}_{i}, -4\overline{\al}_{i})$ & $\pm(2\overline{\al}_{i}, 2\overline{\al}_{i}, -4\overline{\al}_{i})$ \\
   & $\pm(2\overline{\al}_{i}, 3\overline{\al}_{i}, -5\overline{\al}_{i})$ & $\pm(2\overline{\al}_{i}, 3\overline{\al}_{i}, -5\overline{\al}_{i})$ \\
   & & $\pm(2\overline{\al}_{i}, 4\overline{\al}_{i}, -6\overline{\al}_{i})$ \\
   & & $\pm(3\overline{\al}_{i}, 3\overline{\al}_{i}, -6\overline{\al}_{i})$ \\ 
 \hline
 \end{tabular}
 \end{center}

\medskip

\noindent By Corollary \ref{betti1}, Remarks \ref{maxheight}, \ref{bet1}  and Table 1 we also conclude that 
\begin{theorem}\label{number}
Let $M=G/K$ be a generalized flag manifold of a compact simple Lie group with $b_{2}(G/K)=1$  and $|R_{\fr{t}}^{+}|=r\geq 2$.  Let $N$ denote  the number of non equivalent symmetric $\fr{t}$-triples  on $\fr{t}^{*}$, that is $N=|\mathcal{S}|$.  Then  $N\geq |R_{\fr{t}}^{+}|-|\Pi_{\fr{t}}|=r-1$.  The exact number $N$ is given in the following table:
\end{theorem}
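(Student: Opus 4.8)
The plan is to read off $N$ directly from the complete list of symmetric $\fr{t}$-triples established in Corollary \ref{betti1} and Remark \ref{bet1}, organised through the equivalence relation of Definition \ref{equivttr}. First I would recall from the proof of Theorem \ref{chrysik1} that, since $b_{2}(M)=1$, the manifold $M$ is defined by a single black root $\Pi_{M}=\{\al_{i}\}$ with $\Hgt(\al_{i})=r\geq 2$, and that the associated $\fr{t}$-root system is $R_{\fr{t}}^{+}=\{\overline{\al}_{i},2\overline{\al}_{i},\ldots,r\overline{\al}_{i}\}$ with $\fr{t}^{*}=\bb{R}\overline{\al}_{i}$ one-dimensional. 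Thus every $\fr{t}$-root has the form $\pm p\,\overline{\al}_{i}$ for a unique $p\in\{1,\ldots,r\}$, and by Proposition \ref{isotropy} the irreducible summand attached to the pair $\{p\,\overline{\al}_{i},-p\,\overline{\al}_{i}\}$ is labelled by the integer $p$.

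Next I would classify every symmetric $\fr{t}$-triple $\Xi=(\xi_{1},\xi_{2},\xi_{3})$. By Lemma \ref{restriction}(1) it is neither all positive nor all negative, so after possibly replacing $\Xi$ by the equivalent triple $-\Xi$ we may assume it has exactly one negative entry; writing the two positive entries as $p\,\overline{\al}_{i}$ and $q\,\overline{\al}_{i}$ with $1\leq p\leq q$, the relation $\xi_{1}+\xi_{2}+\xi_{3}=0$ forces $\Xi=\pm(p\,\overline{\al}_{i},\,q\,\overline{\al}_{i},\,-(p+q)\overline{\al}_{i})$ with $p+q\leq r$, and conversely any such triple is a symmetric $\fr{t}$-triple (the restriction of Lemma \ref{restriction}(2) being automatic, as $p+q>p,q$). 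These are precisely the Type A triples ($p=1$) and the Type B triples ($p\geq 2$) of Table 1. To pass from this list to \emph{non-equivalent} triples, observe that $\Xi$ corresponds to the multiset of module-labels $\{p,q,p+q\}$, whose strict maximum is $p+q$; hence two triples built from ordered pairs $(p,q)$ and $(p',q')$ with $p\leq q$, $p'\leq q'$ define the same element of the de Siebenthal's invariant (Theorem \ref{de}) if and only if $p+q=p'+q'$ and $\{p,q\}=\{p',q'\}$, i.e. if and only if $(p,q)=(p',q')$. So the non-equivalent symmetric $\fr{t}$-triples are in bijection with the pairs of integers $(p,q)$ satisfying $1\leq p\leq q$ and $p+q\leq r$.

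Finally I would carry out the count. Taking $p=1$ already produces the $r-1$ distinct Type A triples with $q=1,\ldots,r-1$, which gives $N\geq r-1=|R_{\fr{t}}^{+}|-|\Pi_{\fr{t}}|$. For the exact value, grouping the admissible pairs by the sum $k=p+q$ yields $\lfloor k/2\rfloor$ pairs for each $k=2,\ldots,r$, so $N=\sum_{k=2}^{r}\lfloor k/2\rfloor$. By Remark \ref{maxheight} the only possibilities are $r\in\{2,3,4,5,6\}$, and evaluating this sum in these five cases gives $N=1,2,4,6,9$ respectively, which is the content of the table. The only delicate point in the whole argument is the bookkeeping of the $\pm$ identification together with the $S_{3}$-symmetry implicit in the equivalence relation; once the entry $p+q$ is recognised as the maximum of the triple, the bijection with the pairs $(p,q)$ — and hence the count — is forced.
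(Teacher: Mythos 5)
Your proof is correct and follows the same underlying enumeration as the paper: the symmetric $\fr{t}$-triples are exactly the triples $\pm(p\,\overline{\al}_{i},q\,\overline{\al}_{i},-(p+q)\overline{\al}_{i})$ with $p+q\leq r$, i.e.\ the Type A ($p=1$) and Type B ($p\geq 2$) triples, and $N$ is obtained by counting them. The paper's own proof simply invokes Corollary \ref{betti1} for the $r-1$ Type A triples and then reads the number of Type B triples off Table 1 case by case, leaving two points to inspection: that Types A and B exhaust all symmetric $\fr{t}$-triples, and that distinct pairs $(p,q)$ give non-equivalent triples. You supply both: the exhaustiveness follows from your reduction (via Lemma \ref{restriction}(1) and the $\pm$ identification) to two positive entries summing to at most $r$, and the non-equivalence follows from your observation that $p+q$ is the strict maximum of the index multiset $\{p,q,p+q\}$, so the multiset determines $(p,q)$. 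Your closed formula $N=\sum_{k=2}^{r}\lfloor k/2\rfloor$ then reproduces the table uniformly rather than by separate casework, which is a modest but genuine improvement in rigor and economy over the paper's argument.
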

 
  \begin{center}
  {\bf Table 2.} \ {\small The number $N=|\mathcal{S}|$ of inequivalent symmetric $\fr{t}$-triples on $\fr{t}^{*}$ for $M=G/K$ with $b_{2}(G/K)=1$}
 \end{center}
 \smallskip
  \begin{center}
     \begin{tabular}{r|c|c|c|c|c}
         $r=|R_{\fr{t}}^{+}|$   & $r=2$ & $r=3$ & $r=4$ & $r=5$ & $r=6$ \\
        \thickline
        $N$ & $1$ & $2$ & $4$ & $6$ &  $9$ \\    
  \end{tabular}
 \end{center}
  
    \begin{proof}
    From Corollary \ref{betti1} we obtain at least $|R_{\fr{t}}^{+}|-|\Pi_{\fr{t}}|=r-1$ symmetric $\fr{t}$-triples of Type A given by (\ref{b21}). Thus $N\geq r-1$.  In particular, for $r=2$ or  $3$, one can determine only  symmetric $\fr{t}$-triples of Type A on $\fr{t}^{*}$, and thus the exact number $N$ of these triples is $N=1$  and $N=2$, respectively.  If $4\leq r \leq 6$, by Remark \ref{bet1} we know  that on $\fr{t}^{*}$,  there exist also   the symmetric $\fr{t}$-triples of Type B. In any case, the exact number $n$ of symmetric $\fr{t}$-triples of Type B is obtained from  Table 1. Thus  for $4\leq r \leq 6$  the number $N$ is given by $N=r-1+n$, where  $n=1, 2, 4$ for  $r=4, 5, 6$, respectively.    
    \end{proof}

          
      Next we present for all  flag manifolds $M=G/K$ (of a compact simple Lie group $G$)  with $b_{2}(G/K)=1$, the associated symmetric $\fr{t}$-triples and the non-zero structure constants $c_{ij}^{k}$.  We denote by  $\Pi=\{\al_1, \ldots, \al_{\ell}\}$   a basis of simple roots for $G$, adapted to the choice of $G$. 
      \begin{example}\label{height2}
      \textnormal{Let $M=G/K$  defined by a subset   $\Pi_{M}=\{\al_{i}\}\subset\Pi$ such that $\Hgt(\al_{i})=2$. Then $R_{\fr{t}}=\{\pm\overline{\al}_{i}, \pm2\overline{\al}_{i}\}$, $|R_{\fr{t}}^{+}|=2$ and thus   we obtain the decomposition $\fr{m}=\fr{m}_1\oplus\fr{m}_2$, where the components $\fr{m}_{k}$ are determined  by (\ref{bas}). These spaces have been classified in \cite{Chry1} or \cite{Sakane}.  By Theorem \ref{number} we know that there exists only one symmetric $\fr{t}$-triple of Type A,  given by $\pm(\overline{\al}_{i}, \overline{\al}_{i}, -2\overline{\al}_{i})$. Thus by Corollary \ref{new}     the only non-zero structure constant of $M=G/K$ is the triple $c_{11}^{2}$ and its symmetries. The number $c_{11}^{2}$ was calculated in \cite{Chry1} in terms of the dimensions $d_{i}=\dim_{\bb{R}}\fr{m}_{i}$ $(i=1, 2)$.}
      \end{example}

      \begin{example}\label{height3}
      \textnormal{Let $M=G/K$   defined by a subset  $\Pi_{M}=\{\al_{i}\}\subset\Pi$ such that  $\Hgt(\al_{i})=3$. Then $R_{\fr{t}}=\{\pm\overline{\al}_{i}, \pm2\overline{\al}_{i}, \pm 3\overline{\al}_{i}\}$ and $|R_{\fr{t}}^{+}|=3$.  Thus, $\fr{m}=\fr{m}_1\oplus\fr{m}_2\oplus\fr{m}_3$, where the summands $\fr{m}_{k}$ are  determined  by (\ref{bas}). These flag manifolds have been classified in \cite{Kim}, but we  note that  they do not exhaust all flag manifolds with three isotropy summands (see Remark \ref{3tropy}). By applying (\ref{b21}), we  find   two  symmetric $\fr{t}$-triples of Type A given by $\pm(\overline{\al}_{i}, \overline{\al}_{i}, -2\overline{\al}_{i})$ and $\pm(\overline{\al}_{i}, 2\overline{\al}_{i}, -3\overline{\al}_{i})$.  These are the only symmetric $\fr{t}$-triples on $\fr{t}^{*}$.   Thus the non-zero triples are $c_{11}^{2}, c_{12}^{3}$ and their symmetries.  The  values of these triples can be found  in \cite{stauros}.}
      \end{example}

       \begin{example}\label{height4}
      \textnormal{Let $M=G/K$   defined by a subset  $\Pi_{M}=\{\al_{i}\}\subset\Pi$ such that $\Hgt(\al_{i})=4$. Then $R_{\fr{t}}=\{\pm\overline{\al}_{i}, \pm2\overline{\al}_{i}, \pm 3\overline{\al}_{i}, \pm 4\overline{\al}_{i}\}$ and $|R_{\fr{t}}^{+}|=4$. Thus  $\fr{m}=\fr{m}_1\oplus\fr{m}_2\oplus\fr{m}_3\oplus\fr{m}_4$, where the components $\fr{m}_{k}$ are  given by (\ref{bas}). These spaces have been classified in \cite{Chry2}, but  they do not   exhaust all flag manifolds with four isotropy summands (see Remark \ref{4isotropy}).  By applying (\ref{b21}) we find three  symmetric $\fr{t}$-triples of Type A, given as follows: 
     $\pm(\overline{\al}_{i}, \overline{\al}_{i}, -2\overline{\al}_{i})$,  $\pm(\overline{\al}_{i}, 2\overline{\al}_{i}, -3\overline{\al}_{i})$, $ \pm(\overline{\al}_{i}, 3\overline{\al}_{i}, -4\overline{\al}_{i})$.
           Also, as we have seen in Table 1, by applying (\ref{b212}) we obtain  a symmetric $\fr{t}$-triple of Type B, given by $\pm(2\overline{\al}_{i}, 2\overline{\al}_{i}, -4\overline{\al}_{i})$.
    Thus    the non-zero structure constants of $M=G/K$ are $c_{11}^{2},  c_{12}^{3},   c_{13}^{4}, c_{22}^{4}$,  and their symmetries (cf. \cite{Chry2}).}
      \end{example}

      \begin{example}\label{height5}
      \textnormal{Let $M=G/K$   defined by a subset  $\Pi_{M}=\{\al_{i}\}\subset\Pi$ such that $\Hgt(\al_{i})=5$.  By Remark \ref{maxheight} we know that such a choise exists only for $G=\E_8$. In particular, a basis of simple roots for the root system of $G_8$ can be choosen such that the highest root  $\widetilde{\al}=2\al_{1}+3\al_{2}+4\al_{3}+5\al_{4}+6\al_{5}+4\al_{6}+2\al_{7}+3\al_{8}$ (cf. \cite{AA}, \cite{Chry2}). Thus, only the simple root $\al_4$ is such that $\Hgt(\al_4)=5$ and by setting $\Pi_{M}=\{\al_4\}$ we obtain the  painted  Dynkin diagram }
      \medskip
\[
\begin{picture}(160,25)(-5, -10)
\put(15, 9.5){\circle{4 }}
\put(15, 18){\makebox(0,0){$\al_1$}}
\put(17, 10){\line(1,0){14}}
\put(33.5, 9.5){\circle{4 }}
\put(33.5, 18){\makebox(0,0){$\al_2$}}
\put(35, 10){\line(1,0){13.6}}
 \put(51, 9.5){\circle{4 }}
 \put(51, 18){\makebox(0,0){$\al_3$}}
\put(53,10){\line(1,0){14}}
\put(69,9.5){\circle*{4 }}
\put(69, 18){\makebox(0,0){$\al_4$}}
\put(89,-8){\circle{4}}
\put(99, -9.5){\makebox(0,0){$\al_8$}}
\put(89,-6){\line(0,1){14}}
\put(71,10){\line(1,0){16}}
\put(89,9.5){\circle{4 }}
\put(89, 18){\makebox(0,0){$\al_5$}}
\put(90.7,10){\line(1,0){16}}
\put(109,9.5){\circle{4 }}
\put(109, 18){\makebox(0,0){$\al_6$}}
\put(111,10){\line(1,0){16}}
\put(129,9.5){\circle{4 }}
\put(129, 18){\makebox(0,0){$\al_7$}}
\end{picture}
 \]
 \textnormal{which defines the flag manifold  $M=G/K=\E_8/\U(1)\times \SU(4)\times \SU(5)$. Here we have the decomposition $\fr{m}=\fr{m}_1\oplus\cdots\oplus\fr{m}_5$, since $R_{\fr{t}}^{+}=\{\overline{\al}_{4}, 2\overline{\al}_{4}, 3\overline{\al}_{4}, 4\overline{\al}_{4}, 5\overline{\al}_{4}\}$.  Theorem  \ref{number}  states  that $|\mathcal{S}|=6$.      Indeed,   we obtain the following symmetric $\fr{t}$-triples:
  \begin{eqnarray*}
  && \overline{\al}_{4}+\overline{\al}_{4}+(-2\overline{\al}_{4})=0, \ \overline{\al}_{4}+2\overline{\al}_{4}+(-3\overline{\al}_{4})=0, \ \overline{\al}_{4}+3\overline{\al}_{4}+(-4\overline{\al}_{4})=0,\\
  && \overline{\al}_{4}+4\overline{\al}_{4}+(-5\overline{\al}_{4})=0, \ 2\overline{\al}_{4}+2\overline{\al}_{4}+(-4\overline{\al}_{4})=0, \  2\overline{\al}_{4}+3\overline{\al}_{4}+(-5\overline{\al}_{4})=0.
  \end{eqnarray*}
  Thus the non-zero structure constants are $c_{11}^{2}$, $c_{12}^{3}$, $c_{13}^{4}$, $c_{14}^{5}$, $c_{22}^{4}$, $c_{23}^{5}$, and their symmetries.
  Notice that if we set $\Pi_{M}=\{\al_5\}$, then we obtain the painted Dynkin diagram 
  \[
\begin{picture}(160,25)(-5, -10)
\put(15, 9.5){\circle{4 }}
\put(15, 18){\makebox(0,0){$\al_1$}}
\put(17, 10){\line(1,0){14}}
\put(33.5, 9.5){\circle{4 }}
\put(33.5, 18){\makebox(0,0){$\al_2$}}
\put(35, 10){\line(1,0){13.6}}
 \put(51, 9.5){\circle{4 }}
 \put(51, 18){\makebox(0,0){$\al_3$}}
\put(53,10){\line(1,0){14}}
\put(69,9.5){\circle{4 }}
\put(69, 18){\makebox(0,0){$\al_4$}}
\put(89,-8){\circle{4}}
\put(99, -9.5){\makebox(0,0){$\al_8$}}
\put(89,-6){\line(0,1){14}}
\put(71,10){\line(1,0){16}}
\put(89,9.5){\circle*{4 }}
\put(89, 18){\makebox(0,0){$\al_5$}}
\put(90.7,10){\line(1,0){16}}
\put(109,9.5){\circle{4 }}
\put(109, 18){\makebox(0,0){$\al_6$}}
\put(111,10){\line(1,0){16}}
\put(129,9.5){\circle{4 }}
\put(129, 18){\makebox(0,0){$\al_7$}}
\end{picture}
 \]
 which defines the flag manifold $M=G/K=\E_8/\U(1)\times \SU(2)\times \SU(3)\times \SU(5)$.  Since $\Hgt(\al_{5})=6$, this is the only flag manifold (of a compact simple Lie group) with $b_{2}(M)=1$ and $\fr{m}=\fr{m}_1\oplus\cdots\oplus\fr{m}_{6}$.  For this case,  Theorem  \ref{number}  states  that $|\mathcal{S}|=9$.   Indeed, once can easily determine the following symmetric $\fr{t}$-triples:
  \begin{eqnarray*}
  && \overline{\al}_{5}+\overline{\al}_{5}+(-2\overline{\al}_{5})=0, \ \overline{\al}_{5}+2\overline{\al}_{5}+(-3\overline{\al}_{5})=0, \ \overline{\al}_{5}+3\overline{\al}_{5}+(-4\overline{\al}_{5})=0,\\
  && \overline{\al}_{5}+4\overline{\al}_{5}+(-5\overline{\al}_{5})=0, \
  \overline{\al}_{5}+5\overline{\al}_{5}+(-6\overline{\al}_{5})=0, \
   2\overline{\al}_{5}+2\overline{\al}_{5}+(-4\overline{\al}_{5})=0, \\
&&  2\overline{\al}_{5}+3\overline{\al}_{5}+(-5\overline{\al}_{5})=0, \  2\overline{\al}_{5}+4\overline{\al}_{5}+(-6\overline{\al}_{5})=0, \  3\overline{\al}_{5}+3\overline{\al}_{5}+(-6\overline{\al}_{5})=0.
  \end{eqnarray*} Thus, the non-zero structure constants are $c_{11}^{2}$, $c_{12}^{3}$, $c_{13}^{4}$, $c_{14}^{5}$, $c_{15}^{6}$, $c_{22}^{4}$, $c_{23}^{5}$, $c_{24}^{6}$, $c_{33}^{6}$ and their symmetries.}
    \end{example}

\begin{remark}\label{3tropy}
\textnormal{In \cite{Kim}, it was proved that flag manifolds with three isotropy summands are also defined by setting $\Pi_{M}=\{\al_{i},  \al_{j} : i\neq j\}$ such that   $\Hgt(\al_{i})=\Hgt(\al_{j})=1$. For these spaces it was shown that there is  only one non-zero stucture constant, namely $c_{12}^{3}$ and its symmetries. Indeed, for such a flag manifold it is $\dim_{\bb{R}}\fr{t}=2$ and thus $b_{2}(M)=2$. A $\fr{t}$-basis is given by $\Pi_{\fr{t}}=\{\overline{\al}_{i}=\al_{i}|_{\fr{t}}, \overline{\al}_{j}=\al_{j}|_{\fr{t}} : i\neq j\}$, thus by choosing a positive root $\al=\sum_{k=1}^{\ell}c_{k}\al_{k}\in R^{+}_{M}$  we conclude that any positive $\fr{t}$-root is given by $\xi=\kappa(\al)=c_{i}\overline{\al}_{i}+c_{j}\overline{\al}_{j}$, where $0\leq c_{1}, c_{j}\leq 1$, since by assumption it is $\Hgt(\al_{i})=\Hgt(\al_{j})=1$.  Note that we cannot have simultaneously $c_{i}=c_{j}=0$, because then $\al\in R_{K}$.  Therefore  the $\fr{t}$-root system is given by $R_{\fr{t}}=\{\pm\overline{\al}_{i}, \pm\overline{\al}_{j}, \pm(\overline{\al}_{i}+\overline{\al}_{j})\}$, and so $\fr{m}=\fr{m}_1\oplus\fr{m}_2\oplus\fr{m}_3$, where the summands $\fr{m}_{k}$  are defined according  to (\ref{bas}).  Now  it is obvious that the only symmetric  $\fr{t}$-triple is given by $\pm\big( \overline{\al}_{i},  \overline{\al}_{j}, -(\overline{\al}_{i}+\overline{\al}_{j})\big)$
and thus  $c_{12}^{3}\neq 0$.}
 \end{remark}

 \begin{remark}\label{4isotropy}   \textnormal{ 
  In \cite{Chry2}, the author  proved that flag manifolds with four isotropy summands are also defined by sets of the form $\Pi_{M}=\{\al_{i}, \al_{j} : i\neq j\}$ such that $\Hgt(\al_{i})=1$, $\Hgt(\al_{j})=2$, or $\Hgt(\al_{i})=2$, $\Hgt(\al_{j})=1$.   However, since subsets $\Pi_{M}$ of the last form  also detemine  flag manifolds $M=G/K$ with five isotropy summands, this  correspondence is not one-to-one.   Indeed, a $\fr{t}$-basis  is given by $\Pi_{\fr{t}}=\{\overline{\al}_{i}, \overline{\al}_{j} : i\neq j\}$, where $\overline{\al}_{i}=\al_{i}|_{\fr{t}}$ and $\overline{\al}_{j}=\al_{j}|_{\fr{t}}$. If we assume for example that $\Hgt(\al_{i})=1$, and $\Hgt(\al_{j})=2$, then    given a root $\al=\sum_{k=1}^{\ell}c_{k}\al_{k}\in R^{+}$  we have $\xi=\kappa(\al)=c_{i}\overline{\al}_{i}+c_{j}\overline{\al}_{j}$, where $0\leq c_{1}\leq 1$ and $0\leq  c_{j}\leq 2$. Thus, one can determine  at most five positive $\fr{t}$-roots, given by $\overline{\al}_{i}$, $\overline{\al}_{j}$, $\overline{\al}_{i}+\overline{\al}_{j}$, $\overline{\al}_{i}+2\overline{\al}_{j}$, $2\overline{\al}_{j}$.
    The existence of the last $\fr{t}$-root  in the above sequence depends on the root system of $G$, and more particularly on the existence of a   root $\al =\sum_{k=1}^{\ell}c_{k}\al_{k}\in R^{+}$ such that $c_{i}=0$ and $c_{j}=2$.    Such roots appear if $G\in\{\SO(2\ell+1), \SO(2\ell), \E_6, \E_7\}$, and the associated flag manifolds $M=G/K$ are such that $|R_{\fr{t}}^{+}|=5$ (cf. \cite[Prop. 6]{Chry2}). The subsets $\Pi_{M}=\{\al_{i}, \al_{j} : i\neq j\}$ which define exactly four positive $\fr{t}$-roots can be found in \cite[Table 4]{Chry2}.   In order to study the symmetric $\fr{t}$-triples for the associated flag manifold $M=G/K$ with $\fr{m}=\fr{m}_1\oplus\fr{m}_2\oplus\fr{m}_3\oplus\fr{m}_4$,  we  set for simplicity
  $
  \Pi_{M}^{A}=\{\al_{i}, \al_{j} : \Hgt(\al_{i})=1, \ \Hgt(\al_{j})=2\}$ and $\Pi_{M}^{B}=\{\al_{i}, \al_{j} : \Hgt(\al_{i})=2, \ \Hgt(\al_{j})=1\}.
 $
   Then, the  corresponding $\fr{t}$-root  systems are given by
   $
     R_{\fr{t}}^{A}=\{\pm\overline{\al}_{i}, \pm\overline{\al}_{j}, \pm(\overline{\al}_{i}+\overline{\al}_{j}), \pm(\overline{\al}_{i}+2\overline{\al}_{j})\}$ and $R_{\fr{t}}^{B}=\{\pm\overline{\al}_{i}, \pm\overline{\al}_{j}, \pm(\overline{\al}_{i}+\overline{\al}_{j}), \pm(2\overline{\al}_{i}+\overline{\al}_{j})\}$, respectively.
    For $R_{\fr{t}}^{A}$ we find the symmetric $\fr{t}$-triples  $\pm\big( \overline{\al}_{i},  \overline{\al}_{j}, -(\overline{\al}_{i}+\overline{\al}_{j})\big)$, and  $\pm\big(\overline{\al}_{j}, \overline{\al}_{i}+\overline{\al}_{j}, -(\overline{\al}_{i}+2\overline{\al}_{j})\big)$.
         So, the only non-zero  structure constants of the corresponding  flag manifolds, are $c_{12}^{3}$, $c_{23}^{4}$ and their symmetries.  For  $R_{\fr{t}}^{B}$  we get two symmetric $\fr{t}$-triples  given by $\pm\big( \overline{\al}_{i},  \overline{\al}_{j}, -(\overline{\al}_{i}+\overline{\al}_{j})\big)$, and $\pm\big(\overline{\al}_{i}, \overline{\al}_{i}+\overline{\al}_{j}, -(2\overline{\al}_{i}+\overline{\al}_{j})\big)$.
     Hence,  the only non-zero  structure constants of the corresponding flag manifolds, are $c_{12}^{3}$, $c_{13}^{4}$ and their symmetries. The explicit values of these triples are given in \cite{Chry2}.}
 \end{remark}

  \subsection{Symmetric $\fr{t}$-triples for full flag manifolds $M=G/T$}
 Let us now extend our study of symmetric $\fr{t}$-triples on   full flag manifolds $M=G/T$, where  $T$ is maximal torus of a compact connected simple Lie group $G$.     Such a space  is obtained by painting black all nodes in   the Dynkin diagram of $G$, that is  $\Pi_{K}=\emptyset$ and  $\Pi_{M}=\Pi=\{\al_1, \ldots, \al_\ell\}$.  It follows that $R_{K}=\emptyset$ and  $R=R_{M}$, i.e. the set of complementary roots of $M$ is identified with the root system of $G$, and hence the associated $\fr{t}$-root system $R_{\fr{t}}$ of $G/T$ has the properties of  a root system.    It is obvious that $\fr{t}=T_{e}(T)$, thus $\dim_{\bb{R}}\fr{t}=\dim_{\bb{R}} T=\ell=\rnk G$ and $b_{2}(G/T)=\ell$. Also $\fr{t}^{*}=\sum_{i=1}^{\ell}\mathbb{R}\Lambda_{i}$, where $\Lambda_{i}$ are the fundamental weights corresponding to $\Pi$, and a  $\fr{t}$-basis $\Pi_{\fr{t}}$ is given by $\Pi_{\fr{t}}=\{\overline{\al}_{p}=\al_{p}|_{\fr{t}} : 1\leq p\leq \ell\}=\{\overline{\al}_1, \ldots, \overline{\al}_{\ell}\}$, i.e. $|\Pi_{\fr{t}}|=|\Pi_{M}|=|\Pi|=\ell$.  Since $R^{+}_{M}=R^{+}$, it is also $R^{+}_{\fr{t}}=\kappa(R^{+})$ and $|R_{\fr{t}}^{+}|=|R^{+}|$.  
 \begin{prop}\label{GT}
 For a full flag manifold  $M=G/T$ of a compact simple Lie group $G$, there is a bijective correspondence between roots and $\fr{t}$-roots.
  \end{prop}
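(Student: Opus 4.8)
The plan is to observe that for a full flag manifold the restriction map $\kappa$ is in fact a linear isomorphism, so the assignment $R\ni\al\mapsto\kappa(\al)\in R_{\fr{t}}$ is a bijection essentially by inspection. First I would record that, since $\Pi_{K}=\emptyset$, Proposition \ref{dimcent} gives $\dim_{\bb{R}}\fr{t}=\ell-|\Pi_{K}|=\ell=\dim_{\bb{R}}\fr{h}$; combined with the inclusion $\fr{t}\subseteq\fr{h}$ established in the proof of Proposition \ref{dimcent} (equivalently, with the fact that $\fr{t}=Z(\fr{k})=\fr{k}=\fr{h}$ because here $\fr{k}=\fr{h}$ is abelian), this forces $\fr{t}=\fr{h}$. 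Hence $\fr{t}^{*}=\fr{h}^{*}$ and $\kappa:\fr{h}^{*}\to\fr{t}^{*}$ is the identity map, so $R_{\fr{t}}=\kappa(R)=R$.

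With this in hand the correspondence $\al\leftrightarrow\kappa(\al)$ is surjective onto $R_{\fr{t}}$ by the very definition $R_{\fr{t}}=\kappa(R)$, and injective because $\kappa(\al)=\kappa(\be)$ means $\al-\be$ vanishes on $\fr{t}=\fr{h}$, i.e. $\al=\be$. Alternatively, and more in the spirit of the earlier sections, one can argue at the level of coordinates: by Proposition \ref{tbasis} the set $\Pi_{\fr{t}}=\{\overline{\al}_{1},\ldots,\overline{\al}_{\ell}\}$ is a basis of $\fr{t}^{*}$ of the same cardinality $\ell=|\Pi|$ as the basis $\Pi$ of $\fr{h}^{*}$, and formula (\ref{troots}), which has no $\phi_{j}$-terms since $\Pi_{K}=\emptyset$, shows that $\kappa$ sends $\al=\sum_{i}f_{i}\al_{i}$ to $\sum_{i}f_{i}\overline{\al}_{i}$; thus $\kappa$ carries one basis onto the other preserving coordinates and is therefore a linear isomorphism, whose restriction to $R$ is the desired bijection onto $R_{\fr{t}}$.

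There is no genuine obstacle here: the statement amounts to the remark that collapsing $\fr{h}$ onto the center of $\fr{k}$ does nothing when $K=T$. The only point to make carefully is the identification $\fr{t}=\fr{h}$, which is immediate from Proposition \ref{dimcent} together with $\Pi_{K}=\emptyset$, and I would present the argument in just two or three lines along these lines.
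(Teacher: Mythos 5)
Your proof is correct and follows essentially the same route as the paper: the paper notes that the roots annihilated by $\kappa$ are exactly those in $R_{K}$, which is empty for $G/T$, so $\kappa$ is injective on $R$; you make the equivalent (and if anything cleaner) observation that $\Pi_{K}=\emptyset$ forces $\fr{t}=\fr{h}$, so $\kappa$ is a linear isomorphism. Both arguments rest on the single fact that nothing is collapsed when $R_{K}=\emptyset$, so there is nothing to add.
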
   
  \begin{proof}
 The kernel of the linear map $\kappa :\fr{h}^{*}\to\fr{t}^*$  is given by ${\rm Ker}\kappa = \{\al\in\fr{h}^* : \kappa(\al)=0\} = R_{K}\cup\{0\}$,  and thus  in general $\kappa$ is not an injection. However, for a full flag manifold $M=G/T$ it is $R_{K}=\emptyset$ and ${\rm Ker} \kappa=\{0\}$, thus we obtain the desired correspondence.
  \end{proof}    
  
  \begin{prop}\label{numberfull}
 Let  $M=G/T$ be a full flag manifold  of a compact simple Lie group $G$.  Then the isotropy representation of $M$ decomposes into a direct sum of 2-dimensional pairwise inequivalent irreducible $T$-submodules $\fr{m}_{\al}$.  The number of these submodules is equal to   $|R^{+}|$.  
 \end{prop}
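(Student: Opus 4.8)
The plan is to read off the decomposition directly from the general description of the isotropy representation of a flag manifold, specialised to the case $K=T$, combined with Proposition \ref{GT}. Since $M=G/T$ is a full flag manifold we have $\fr{k}=\fr{h}$, $R_{K}=\emptyset$ and $R_{M}=R$, so $\fr{t}=\fr{h}$ and the centre of $\fr{k}$ is all of $\fr{h}$. By Proposition \ref{isotropy} the complexified isotropy module decomposes as $\fr{m}^{\bb{C}}=\sum_{\xi\in R_{\fr{t}}}\fr{m}_{\xi}$ into pairwise inequivalent irreducible $\ad(\fr{h}^{\bb{C}})$-submodules $\fr{m}_{\xi}=\sum_{\al\in R_{M}:\kappa(\al)=\xi}\bb{C}E_{\al}$. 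The first step is to observe that, by Proposition \ref{GT}, the restriction map $\kappa$ is injective on $R=R_{M}$; hence for every $\fr{t}$-root $\xi$ there is exactly one root $\al\in R$ with $\kappa(\al)=\xi$, and therefore $\fr{m}_{\xi}=\bb{C}E_{\al}$ is one-dimensional over $\bb{C}$. In particular $|R_{\fr{t}}|=|R|$ and $|R_{\fr{t}}^{+}|=|R^{+}|$.

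Next I would pass to the real form. From (\ref{tangent}) we have $\fr{m}=\sum_{\xi\in R_{\fr{t}}^{+}}(\fr{m}_{\xi}\oplus\fr{m}_{-\xi})^{\tau}$, and by the previous step each summand equals $(\bb{C}E_{\al}\oplus\bb{C}E_{-\al})^{\tau}$, where $\al\in R^{+}$ is the unique root with $\kappa(\al)=\xi$. Since $\tau(E_{\al})=E_{-\al}$, this fixed-point set is spanned over $\bb{R}$ by $A_{\al}=E_{\al}+E_{-\al}$ and $B_{\al}=\sqrt{-1}(E_{\al}-E_{-\al})$, i.e. it is the two-dimensional real subspace $\fr{m}_{\al}:=\bb{R}A_{\al}+\bb{R}B_{\al}$ already appearing in (\ref{submodule}) and (\ref{bas}). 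Thus $\fr{m}=\sum_{\al\in R^{+}}\fr{m}_{\al}$ with each $\fr{m}_{\al}$ of real dimension $2$, and the number of summands is $|R^{+}|$.

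It then remains to check that the real modules $\fr{m}_{\al}$ are irreducible and pairwise inequivalent as $T$-modules (equivalently, as $\ad(\fr{h})$-modules, $T$ being connected). For irreducibility, $T$ acts on $\fr{m}_{\al}$ through the nontrivial character associated with $\al\neq 0$, so $\fr{m}_{\al}$ admits no proper invariant subspace; equivalently, $\fr{m}_{\al}$ is the real form of the irreducible $\ad(\fr{h}^{\bb{C}})$-module $\bb{C}E_{\al}$, which is irreducible because $\al\neq-\al$ in a root system. For inequivalence, two such two-dimensional real $T$-modules attached to $\al,\be\in R^{+}$ are equivalent if and only if $\be=\pm\al$; since $\al,\be$ are both positive, $\al\neq\be$ forces $\al\neq\pm\be$, so distinct positive roots give inequivalent summands. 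Alternatively, inequivalence is inherited from the pairwise inequivalence of the complex modules $\fr{m}_{\xi}$ asserted in Proposition \ref{isotropy}.

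The argument is essentially a matter of assembling earlier results, so I do not expect a genuine obstacle. The only point requiring a little care is the passage from inequivalence of the complex $\ad(\fr{h}^{\bb{C}})$-modules to inequivalence of the underlying real $T$-modules, for which the sign caveat $\be\neq\pm\al$ must be noted — harmless here because $R^{+}$ contains no pair $\{\al,-\al\}$.
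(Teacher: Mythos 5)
Your argument is correct and follows essentially the same route as the paper's own proof: both use the decomposition (\ref{tangent}) specialised via the injectivity of $\kappa$ from Proposition \ref{GT} to identify each summand with the two-dimensional real space $\mathbb{R}A_{\al}+\mathbb{R}B_{\al}$ of (\ref{bas}), indexed by $\al\in R^{+}$. Your additional remarks on irreducibility and pairwise inequivalence merely make explicit what the paper inherits from Proposition \ref{isotropy}.
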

  \begin{proof} 
   Let $\fr{g}=\fr{t}\oplus\fr{m}$ be a reductuve decomposition associated to $G/T$. Then
  $
 \fr{m}=\sum_{\al\in R^{+}}(\fr{m}_{\kappa(\al)}\oplus\fr{m}_{-\kappa(\al)})^{\tau}=\sum_{\al\in R^{+}}\fr{m}_{\al},
$
 where we have set for simplicity $\fr{m}_{\al}=(\fr{m}_{\kappa(\al)}\oplus\fr{m}_{-\kappa(\al)})^{\tau}$, for any $\al\in R^{+}$.  Each $\fr{m}_{\al}$   is a  real irreducible $\Ad(T)$-submodule, which  does not depend on the choise  of the simple roots of $R$.  From (\ref{bas}) it follows that $\fr{m}_{\al}=\mathbb{R}A_{\al}+\mathbb{R}B_{\al}=\mathbb{R}(E_{\al}+E_{-\al})+\mathbb{R}i(E_{\al}-E_{-\al})$,
  and this completes the proof.
  \end{proof}
 In Table 1, following for example \cite{Hel}, we give for any full flag manifold $G/T$ of a compact simple Lie group $G$  the exact number  of the corresponding isotropy summands.
  
 \begin{center}
 {\bf Table 3.} \ {\small The number of the isotropy summands for $G/T$}
 \end{center}
 \smallskip
{\footnotesize{ \begin{center}
 \begin{tabular}{llll}
 \hline 
 $\mbox{Simple  Lie group} \ G$ & Full flag manifold   $G/T$ & $|R|=|R_{\fr{t}}|$ & $\fr{m}=\oplus_{i=1}^{s}\fr{m}_{i}$ \\
  \thickline
  $\SU(\ell+1), \ \ell\geq 1$ & $\SU(\ell+1)/T$ & $\ell(\ell+1)$ & $s=\ell(\ell+1)/2$ \\
  $\SO(2\ell+1), \ \ell\geq 2$ & $\SO(2\ell+1)/T$ & $2\ell^{2}$ & $s=\ell^{2}$ \\
  $\Sp(\ell), \ \ell\geq 2$ & $\Sp(\ell)/T$ & $2\ell^{2}$ & $s=\ell^{2}$ \\
  $\SO(2\ell), \ \ell\geq 3$ & $\SO(2\ell)/T$ & $2\ell(\ell-1)$ & $s=\ell(\ell-1)$ \\
  $\G_2$ & $\G_2/T$ & $12$ & $s=6$ \\
  $\F_4$ & $\F_4/T$ & $48$ & $s=24$ \\
  $\E_6$ & $\E_6/T$ & $72$ & $s=36$ \\
  $\E_7$ & $\E_7/T$ & $126$ & $s=63$ \\
  $\E_8$ & $\E_8/T$ & $240$ & $s=120$ \\
  \hline 
  \end{tabular} 
 \end{center}}}
 
 \medskip
 The full flags $\SU(\ell+1)/T$ and $\SO(2\ell)/T$  for $\ell=1$ and $\ell=2$, respectively, give rise to  the complex projective line $\bb{C}P^{1}=\SU(2)/\U(1)\cong \SO(4)/\U(2)$, which is an isotropy irreducible Hermitian symmetric space and thus from Theorem \ref{height1}  we have $\mathcal{S}=\emptyset$. As we will see in the following, this is the only full flag manifold for which one cannot define symmetric $\fr{t}$-triples, and thus all structure constants are zero. Next, we will prove that full flag manifolds give rise to a second class of  flag manifolds, for which  a generalization of Lemma \ref{carter} holds. 
   \begin{theorem}\label{chrysik2}
  Let $M=G/T$ be a full flag manifold of a compact simple Lie group $G$, $R_{\fr{t}}$ the associated $\fr{t}$-root system, and $\Pi_{\fr{t}}$ the corresponding $\fr{t}$-basis. Then,  given $\xi\in R_{\fr{t}}^{+}$ such that $\xi\notin\Pi_{\fr{t}}$ we can   find at least one $\al_{p}\in\Pi$ such that $\xi-\overline{\al}_{p}\in R_{\fr{t}}^{+}$, where  $\overline{\al}_{p}=\kappa(\al_{p})=\al_{p}|_{\fr{t}}\in\Pi_{\fr{t}}$.
  \end{theorem}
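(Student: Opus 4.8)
The plan is to transfer Lemma~\ref{carter} from the root system $R$ of $\fr{g}^{\bb{C}}$ to the $\fr{t}$-root system $R_{\fr{t}}$, exploiting the fact that for a full flag manifold the restriction map $\kappa:\fr{h}^{*}\to\fr{t}^{*}$ is a bijection (Proposition~\ref{GT}) which moreover sends the simple roots $\Pi$ bijectively onto the $\fr{t}$-basis $\Pi_{\fr{t}}$ (Proposition~\ref{tbasis}) and sends $R^{+}$ onto $R_{\fr{t}}^{+}$. First I would fix $\xi\in R_{\fr{t}}^{+}$ with $\xi\notin\Pi_{\fr{t}}$ and pull it back: since $\kappa$ is a bijection and $R_{\fr{t}}^{+}=\kappa(R^{+})$, there is a unique $\al\in R^{+}$ with $\kappa(\al)=\xi$. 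The condition $\xi\notin\Pi_{\fr{t}}$ forces $\al\notin\Pi$, because $\kappa$ maps $\Pi$ onto $\Pi_{\fr{t}}$ bijectively; equivalently, $\al$ has height $\geq 2$ in $R$.

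Next I would apply Lemma~\ref{carter} to $\al$: there exists a simple root $\al_{p}\in\Pi$ such that $\al-\al_{p}\in R^{+}$. Applying $\kappa$ and using its linearity gives
\[
\kappa(\al-\al_{p})=\kappa(\al)-\kappa(\al_{p})=\xi-\overline{\al}_{p},
\]
and since $\al-\al_{p}\in R^{+}$ we get $\xi-\overline{\al}_{p}=\kappa(\al-\al_{p})\in\kappa(R^{+})=R_{\fr{t}}^{+}$. This is exactly the assertion, and it also shows $\overline{\al}_{p}\in\Pi_{\fr{t}}$ as claimed. One small point to verify along the way is that $\al-\al_{p}\notin R_{K}\cup\{0\}=\ker\kappa$, so that $\xi-\overline{\al}_{p}$ is genuinely a nonzero $\fr{t}$-root; for a full flag manifold this is immediate since $R_{K}=\emptyset$ and $\al-\al_{p}\neq 0$ (its height is one less than that of $\al\notin\Pi$, hence at least $1$).

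I do not expect a serious obstacle here: the whole argument is a one-line transport of Lemma~\ref{carter} through the isomorphism $\kappa$, and the only thing to be careful about is bookkeeping — confirming that ``$\xi\notin\Pi_{\fr{t}}$'' translates correctly into ``$\al\notin\Pi$'' and that the image of a positive root under $\kappa$ is again a positive $\fr{t}$-root, both of which follow directly from Propositions~\ref{GT} and~\ref{tbasis} together with the identifications $R=R_{M}$, $R^{+}=R_{M}^{+}$, $R_{\fr{t}}^{+}=\kappa(R_{M}^{+})$ valid for $G/T$. If anything, the mild subtlety is that in general $\kappa$ collapses $R_{K}$ to zero, so the argument genuinely needs the full-flag hypothesis $R_{K}=\emptyset$; this is why the statement is phrased for $M=G/T$ rather than for an arbitrary flag manifold, and it parallels the role played by $\Hgt(\al_{i})\geq 2$ in Theorem~\ref{chrysik1}.
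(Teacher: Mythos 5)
Your proof is correct and rests on the same two ingredients as the paper's own argument --- Lemma \ref{carter} and the injectivity of $\kappa$ on $R$ for a full flag manifold (Proposition \ref{GT}) --- so it is essentially the same proof; the only difference is that you run it in the forward direction (pull $\xi$ back to the unique $\al\in R^{+}$, apply Lemma \ref{carter}, push forward along $\kappa$), whereas the paper argues by contradiction and must separately rule out $\xi-\overline{\al}_{p}\in R_{\fr{t}}^{-}$ via the simplicity of $\overline{\al}_{p}$. Your direct version is, if anything, slightly cleaner, since $\kappa(R^{+})=R_{\fr{t}}^{+}$ makes that extra step unnecessary.
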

  \begin{proof}
  We have seen that a $\fr{t}$-basis is given by $\Pi_{\fr{t}}=\{\overline{\al}_{p}=\al_{p}|_{\fr{t}} : 1\leq p\leq \ell\}$.  Assume that the result is false, that is $\xi-\overline{\al}_{p}\notin R_{\fr{t}}^{+}$, for any $\overline{\al}_{p}=\al_{p}|_{\fr{t}}\in\Pi_{\fr{t}}$.   We will show that  $\xi-\overline{\al}_{p}\notin R_{\fr{t}}^{-}$, so $\xi-\overline{\al}_{p}\notin R_{\fr{t}}$. In contrary, we assume that $\xi-\overline{\al}_{p}\in R_{\fr{t}}^{-}$.  Thus $\overline{\al}_{p}-\xi\in R_{\fr{t}}^{+}$. But then $\overline{\al}_{p}=(\overline{\al}_{p}-\xi)+\xi$,
  which is a contradiction because $\overline{\al}_{p}$ is a simple $\fr{t}$-root and it can not be expressed as the  sum of two positive $\fr{t}$-roots.
 Therefore   $\xi-\overline{\al}_{p}\notin R_{\fr{t}}$.  Let now $\al$ be a root  such that $\kappa(\al)=\xi$.  By assumption, it is $\xi\in R_{\fr{t}}^{+}$ and $\xi\notin\Pi_{\fr{t}}$. Since the projection $\kappa$ always maps roots from $\Pi_{M}$ to simple $\fr{t}$-roots  (see \cite{Chry2}), we conclude that it must be $\al\in R^{+}$ and $\al\not\in \Pi$.  However, we  proved that $\xi-\overline{\al}_{p}\notin R_{\fr{t}}$, or equivalently
 $
  \xi-\overline{\al}_{p}=\kappa(\al)-\kappa(\al_{p})=\kappa(\al-\al_{p})\notin R_{\fr{t}}.
 $
  Since $R_{K}=\emptyset$  and $\kappa(0)=0\notin R_{\fr{t}}$, where $0\in\fr{t}^{*}$, the last condition is true if and only if $\al=\al_{p}$, or $\al-\al_{p}\notin R$. The first condition is rejected by assumption.  The second one  is a contradiction due to Lemma \ref{carter}. This proves  our claim.
  \end{proof}
     \begin{remark}\label{bravo}
  \textnormal{Note that Theorem \ref{chrysik2} does not tell us anything about the uniqueness of the simple root $\al_{p}$, but just for its existence.  Thus to a $\fr{t}$-root $\xi\in R_{\fr{t}}$ (with $\xi\notin\Pi_{t}$), there may correspond  more that one simple roots $\al_{p}\in\Pi$ with $\xi-\overline{\al}_{p}\in R_{\fr{t}}^{+}$ (see Example \ref{su4}).} 
  \end{remark}
       \begin{corol}\label{fulchrysik2}
  Let $M=G/T$ be a full flag manifold of a compact simple Lie group $G$ with $\rnk G=\dim_{\bb{R}} T\geq 2$, and let $R_{\fr{t}}$ be the associated $\fr{t}$-root system.  Then  any $\fr{t}$-root $\xi\in R_{\fr{t}}$ such that $\xi\notin\Pi_{\fr{t}}$, belongs to a symmetric $\fr{t}$-triple, i.e.  we can always  find $\zeta, \eta\in R_{\fr{t}}$  such that    $\xi+\zeta+\eta=0$.
  \end{corol}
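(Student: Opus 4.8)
The plan is to read this off Theorem \ref{chrysik2} with a small amount of sign bookkeeping. First I would reduce the statement to the task of producing, for an arbitrary non-simple positive $\fr{t}$-root $\mu\in R_{\fr{t}}^{+}\setminus\Pi_{\fr{t}}$, a symmetric $\fr{t}$-triple one of whose entries is $\mu$. This suffices: if the given $\xi$ is positive then $\xi$ is itself such a $\mu$; if $\xi$ is negative and $-\xi\notin\Pi_{\fr{t}}$ then $-\xi$ is such a $\mu$, and the opposite of a symmetric $\fr{t}$-triple containing $-\xi$ is a symmetric $\fr{t}$-triple containing $\xi$ (recall from the remarks following Definition \ref{equivttr} that $-\Xi$ is a symmetric $\fr{t}$-triple whenever $\Xi$ is).

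For $\mu\in R_{\fr{t}}^{+}\setminus\Pi_{\fr{t}}$, Theorem \ref{chrysik2} supplies a simple root $\al_{p}\in\Pi$ with $\mu-\overline{\al}_{p}\in R_{\fr{t}}^{+}$, where $\overline{\al}_{p}=\kappa(\al_{p})=\al_{p}|_{\fr{t}}\in\Pi_{\fr{t}}\subset R_{\fr{t}}^{+}$. Then $\overline{\al}_{p}+(\mu-\overline{\al}_{p})+(-\mu)=0$ with $-\mu\in R_{\fr{t}}^{-}$, so the three $\fr{t}$-roots are not all of the same sign; hence $(\overline{\al}_{p},\ \mu-\overline{\al}_{p},\ -\mu)$ is a symmetric $\fr{t}$-triple, and its opposite $(-\overline{\al}_{p},\ \overline{\al}_{p}-\mu,\ \mu)$ is a symmetric $\fr{t}$-triple containing $\mu$. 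This covers every $\xi\notin\Pi_{\fr{t}}$ except a negative simple $\fr{t}$-root, i.e. $\xi=-\overline{\al}_{p}$ for some $\al_{p}\in\Pi$; note that $-\overline{\al}_{p}\notin\Pi_{\fr{t}}$ since $\Pi_{\fr{t}}\subset R_{\fr{t}}^{+}$, so this case does fall under the statement and must be treated.

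For $\xi=-\overline{\al}_{p}$ I would invoke the connectedness of the Dynkin diagram: since $G$ is simple with $\rnk G\geq 2$, the node $\al_{p}$ has a neighbour $\al_{q}\in\Pi$, whence $\al_{p}+\al_{q}\in R^{+}$. As $\kappa$ is linear and $\kappa(R^{+})=R_{\fr{t}}^{+}$, we get $\overline{\al}_{p}+\overline{\al}_{q}\in R_{\fr{t}}^{+}$, and then $(-\overline{\al}_{p},\ \overline{\al}_{p}+\overline{\al}_{q},\ -\overline{\al}_{q})$ is a symmetric $\fr{t}$-triple containing $\xi$. Together with the previous paragraph this exhausts all $\fr{t}$-roots outside $\Pi_{\fr{t}}$.

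There is no serious difficulty here: the entire force of the statement is carried by Theorem \ref{chrysik2}, and what remains is elementary manipulation of the polarization $R_{\fr{t}}=R_{\fr{t}}^{+}\sqcup R_{\fr{t}}^{-}$ together with the observation that a simple Lie group of rank at least two has a connected Dynkin diagram with at least two nodes. The one mildly delicate point — and the reason the last case has to be spelled out separately — is that a negative simple $\fr{t}$-root $-\overline{\al}_{p}$ lies outside $\Pi_{\fr{t}}$ yet its negative is simple, so Theorem \ref{chrysik2} does not apply to it directly.
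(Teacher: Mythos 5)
Your proposal is correct and follows essentially the same route as the paper: both extract the triple $(\overline{\al}_{p},\ \xi-\overline{\al}_{p},\ -\xi)$ from Theorem \ref{chrysik2} and pass to the opposite triple for negative $\fr{t}$-roots. The one place where you go beyond the paper's own argument is the case $\xi=-\overline{\al}_{p}$: such a $\xi$ satisfies $\xi\notin\Pi_{\fr{t}}$ (since $\Pi_{\fr{t}}\subset R_{\fr{t}}^{+}$) and so falls under the corollary as stated, yet its negative \emph{is} simple, so the reduction ``replace $\xi$ by $-\xi$ and apply Theorem \ref{chrysik2}'' breaks down there; the paper's proof does not address this case. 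Your patch — using that a connected Dynkin diagram with at least two nodes gives a neighbour $\al_{q}$ of $\al_{p}$ with $\al_{p}+\al_{q}\in R^{+}$, hence the triple $(-\overline{\al}_{p},\ \overline{\al}_{p}+\overline{\al}_{q},\ -\overline{\al}_{q})$ — is valid and closes that gap, so your write-up is actually the more complete of the two.
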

 \begin{proof}
 Let $\xi\in R_{\fr{t}}^{+}$.  By Theorem \ref{chrysik2}   we can find a suitable simple root $\al_{p}\in\Pi=\Pi_{M}$, such that $\xi-\overline{\al}_{p}\in R_{\fr{t}}^{+}$, where $\overline{\al}_{p}=\al_{p}|_{\fr{t}}\in\Pi_{\fr{t}}$. Then $(\overline{\al}_{p}, \xi-\overline{\al}_{p}, -\xi)$ is  the desired symmetric $\fr{t}$-triple,  since $\overline{\al}_{p}+ (\xi-\overline{\al}_{p}) + (-\xi)=0$, $\overline{\al}_{p}, \xi-\overline{\al}_{p}\in R_{\fr{t}}^{+}$, and $-\xi\in R_{\fr{t}}^{-}$.
  Thus  on $G/T$ we can determine symmetric $\fr{t}$-triples in the following way:
   \begin{equation}\label{fultriple1}
  \pm(\overline{\al}_{p}, \xi-\overline{\al}_{p}, -\xi), \quad \xi\in R_{\fr{t}}^{+}. 
     \end{equation}
 If $\xi$ is negative then    $-\xi\in R_{\fr{t}}^{+}$, and thus  in order to obtain the symmetric $\fr{t}$-triple which contains  $-\xi$ we have to replace in   (\ref{fultriple1})  $\xi$ by $-\xi$.  
      \end{proof}    
   Note  that other   inequivalent  symmetric $\fr{t}$-triples in the $\fr{t}$-root system $R_{\fr{t}}$ of  $G/T$, can be obtained  as follows:
   \begin{equation}\label{fultriple2}
   \pm \big(\eta, \zeta, -(\eta+\zeta)\big),  
   \end{equation}
   where $\eta, \zeta \in R_{\fr{t}}$ such that  $\eta+\zeta\in R_{\fr{t}}$,  but $\xi, \zeta\notin\Pi_{\fr{t}}$.

   \begin{corol}\label{fulchrysik3}
  For a full flag manifold $G/T$  of a compact simple Lie group $G$ with $\rnk G=\dim_{\bb{R}} T\geq 2$, there exist  at least $|R^{+}|-|\Pi|=|R^{+}|-\ell$ inequivallent symmetric $\fr{t}$-triples. 
   \end{corol}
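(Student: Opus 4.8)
The plan is to obtain the required $|R^{+}|-\ell$ symmetric $\fr{t}$-triples directly from the family (\ref{fultriple1}) and then to verify that no two of them are equivalent. First I would record the cardinalities involved: since $M=G/T$ is a full flag manifold, the restriction map $\kappa$ is a bijection from $R$ onto $R_{\fr{t}}$ (Proposition \ref{GT}), so $|R_{\fr{t}}^{+}|=|R^{+}|$, while the $\fr{t}$-basis $\Pi_{\fr{t}}=\{\overline{\al}_{1},\dots,\overline{\al}_{\ell}\}$ is contained in $R_{\fr{t}}^{+}$ and has $|\Pi_{\fr{t}}|=|\Pi|=\ell$. Consequently $R_{\fr{t}}^{+}\setminus\Pi_{\fr{t}}$ has exactly $|R^{+}|-\ell$ elements. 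For each $\xi\in R_{\fr{t}}^{+}\setminus\Pi_{\fr{t}}$, Corollary \ref{fulchrysik2} (through Theorem \ref{chrysik2}) provides a simple root $\al_{p}\in\Pi$ with $\xi-\overline{\al}_{p}\in R_{\fr{t}}^{+}$, together with the symmetric $\fr{t}$-triple $\Xi_{\xi}=(\overline{\al}_{p},\,\xi-\overline{\al}_{p},\,-\xi)$ of the shape (\ref{fultriple1}).

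Next I would attach to every equivalence class of symmetric $\fr{t}$-triples a well-defined positive $\fr{t}$-root. Let $\Xi=(\xi_{i},\xi_{j},\xi_{k})$ be a symmetric $\fr{t}$-triple. By part (1) of Lemma \ref{restriction} its three entries cannot all have the same sign, and by part (2) none of them is the negative of another; since they sum to $0$, a short case check then shows that, after possibly replacing $\Xi$ by $-\Xi$, exactly two of $\xi_{i},\xi_{j},\xi_{k}$ lie in $R_{\fr{t}}^{+}$ and one lies in $R_{\fr{t}}^{-}$, and that this normalized form is unique up to an $S_{3}$-permutation. Hence the single negative entry $-\mu(\Xi)$, and therefore the positive $\fr{t}$-root $\mu(\Xi)\in R_{\fr{t}}^{+}$, depends only on the equivalence class of $\Xi$ in the sense of Definition \ref{equivttr}; equivalently, because $R_{\fr{t}}$ is a genuine root system for $G/T$ and $\mu(\Xi)$ equals the sum of the two positive entries, $\mu(\Xi)$ is characterized intrinsically as the unique entry of $\Xi$ which dominates the other two in the ordering of $\fr{t}^{*}$ determined by $R_{\fr{t}}^{+}$.

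Evaluating $\mu$ on the triples constructed in the first paragraph finishes the argument: in $\Xi_{\xi}=(\overline{\al}_{p},\,\xi-\overline{\al}_{p},\,-\xi)$ both $\overline{\al}_{p}$ and $\xi-\overline{\al}_{p}$ lie in $R_{\fr{t}}^{+}$ by Theorem \ref{chrysik2}, so $\Xi_{\xi}$ is already in the normalized form with unique negative entry $-\xi$, whence $\mu(\Xi_{\xi})=\xi$. Thus the map sending $\xi$ to the equivalence class of $\Xi_{\xi}$ has $\mu$ as a left inverse and is therefore injective on $R_{\fr{t}}^{+}\setminus\Pi_{\fr{t}}$. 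This produces at least $|R_{\fr{t}}^{+}\setminus\Pi_{\fr{t}}|=|R_{\fr{t}}^{+}|-|\Pi_{\fr{t}}|=|R^{+}|-\ell$ pairwise inequivalent symmetric $\fr{t}$-triples, which is the assertion of the corollary.

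The only substantive step is the middle one: that the sign pattern of a symmetric $\fr{t}$-triple is forced to be two positive and one negative up to a global sign, and that the resulting negative entry is an equivalence invariant. This is where Lemma \ref{restriction} and the root-system structure of $R_{\fr{t}}$ (available precisely because $G/T$ is full and $\rnk G\geq 2$) are needed; everything else is bookkeeping with the identities $|R_{\fr{t}}^{+}|=|R^{+}|$, $|\Pi_{\fr{t}}|=\ell$ and the inclusion $\Pi_{\fr{t}}\subset R_{\fr{t}}^{+}$, together with the triples supplied by (\ref{fultriple1}).
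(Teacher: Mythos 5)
Your proof is correct and follows the same route as the paper: it produces the triples $\pm(\overline{\al}_{p},\,\xi-\overline{\al}_{p},\,-\xi)$ of Corollary \ref{fulchrysik2} for each of the $|R^{+}|-\ell$ non-simple positive $\fr{t}$-roots $\xi$. The paper's own proof is a one-line appeal to Corollary \ref{fulchrysik2}; your additional middle step --- the invariant $\mu$ that reads off the unique negative entry of a normalized triple and gives $\mu(\Xi_{\xi})=\xi$ --- simply makes explicit the pairwise-inequivalence check that the paper leaves tacit, and it is sound.
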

   \begin{proof}
   This is an immediate consequence of Corrolary \ref{fulchrysik2}.
   \end{proof}
  
  \begin{remark}\label{geq2}
  \textnormal{Note that the condition $\rnk G=\dim_{\bb{R}} T\geq 2$ is assumed in order to avoid the extreme case of $\bb{C}P^{1}=\SU(2)/\U(1)=\SO(4)/\U(2)$, for which   we know that $\mathcal{S}=\emptyset$.   About  the first examples of full flag manifolds $G/T$, where $G$ is one of the  Lie groups $\SU(\ell+1), \SO(2\ell+1)$, $\Sp(\ell)$ with $\ell\geq 2$, or $G=\SO(2\ell)$,  with $\ell\geq 3$, we have $\mathcal{S}\neq \emptyset$.  In particular, by computing the associated $\fr{t}$-root systems, we  see that 
  \begin{center}
  \begin{tabular}{lll}
  $G$ & $G/T$  &  The non zero structure constants\\
  \thickline 
  $\SU(3)$ & $\SU(3)/T$ & $c_{12}^{2}$ (see  Remark \ref{3tropy}), \\
  $\SO(5)$ & $\SO(5)/T$ & $c_{12}^{3}, c_{23}^{4}$ (see  Remark \ref{4isotropy}), \\
  $\Sp(2)$ & $\Sp(2)/T$ & $c_{12}^{3}, c_{13}^{4}$ (see  Remark \ref{4isotropy}), \\
  $\SO(6)$ & $\SO(6)/T$ & $c_{12}^{4}, c_{13}^{5}, c_{25}^{6}, c_{34}^{6}$\\
  \hline
  \end{tabular}
  \end{center}}
  \end{remark}
  \begin{example}\label{su4}
  \textnormal{Consider the full flag manifold $\SU(4)/T$. Recall that a maximal torus $T$ of $\SU(4)$ is given by
  $
  T=\{\diag\{e^{i\theta_1}, \ldots, e^{i\theta_4}\} : \theta_{i}\in\bb{R}, \ \sum_{i}\theta_{i}=0\},
  $
with Lie  algebra    $\fr{t}=\{\diag\{i\theta_1, \ldots, i\theta_4\} : \theta_{i}\in\bb{R}, \ \sum_{i}\theta_{i}=0\}.
  $
     Thus the complexification  
     $
     \fr{t}^{\bb{C}}=\{\diag\{h_1, \ldots, h_4\} : h_{i}\in\bb{C}, \ \sum_{i}h_{i}=0\}, 
    $
    is a Cartan subalgebra of the complex simple Lie algebra $(\fr{su}(4))^{\bb{C}}=\fr{sl}_{4}\bb{C}$.   It is obvious that $\dim_{\bb{R}}\fr{t}=\dim_{\bb{C}}\fr{t}^{\bb{C}}=3=\rnk(\SU(4))$.  
      The root system of  $\fr{sl}_{4}\bb{C}$ with respect to $\fr{t}^{\bb{C}}$, is given by 
  $
  R=\{\pm(e_1-e_2), \pm(e_1-e_3), \pm(e_1-e_4), \pm(e_2-e_3), \pm(e_2-e_4), \pm(e_3-e_4)\},
  $
   and a basis of simple roots can be choosen as follows:  $\Pi=\{\al_1=e_1-e_2, \al_2=e_2-e_3, \al_3=e_3-e_4\}$.  With respect to $\Pi$ an ordering in $R$ is given by    $R^{+}=\{\al_1, \al_2, \al_3, \al_1+\al_2, \al_2+\al_3, \al_1+\al_2+\al_3\}$. The full flag $M=\SU(4)/T$ is obtained by setting $\Pi_{M}=\Pi$, and we have $R_{M}=R$ and $R_{M}^{+}=R^{+}$. Since $|\Pi_{M}|=3$, it is obvious that  $b_{2}(M)=3$.  A $\fr{t}$-basis is  given $\Pi_{\fr{t}}=\{\overline{\al}_1, \overline{\al}_2, \overline{\al}_3\}$ where $\overline{\al}_{i}=\al_{i}|_{\fr{t}}$, for any $i\in\{1, 2, 3\}$, and the positive $\fr{t}$-roots have the form
    $
   R_{\fr{t}}^{+}=\{\overline{\al}_1, \  \overline{\al}_2, \ \overline{\al}_3, \ \overline{\al}_1+\overline{\al}_2, \ \overline{\al}_2+\overline{\al}_3, \ \overline{\al}_1+\overline{\al}_2+\overline{\al}_3\}.
   $
   Thus $\fr{m}=\fr{m}_1\oplus\cdots\oplus\fr{m}_6$ (see also Table 3).  Related to Remark \ref{bravo}, note that the positive   $\fr{t}$-root $\xi=\overline{\al}_1+\overline{\al}_2+\overline{\al}_3$ belongs into two inequivalent symmetric $\fr{t}$-triples, since we can find two different simple roots $\al_{p}\neq \al_{q}$, such that $\xi-\overline{\al}_{p}\in R_{\fr{t}}^{+}$ and $\xi-\overline{\al}_{q}\in R_{\fr{t}}^{+}$. Indeed, we have $\xi-\overline{\al}_1=\overline{\al}_2+\overline{\al}_3\in R_{\fr{t}}^{+}$ and $\xi-\overline{\al}_3=\overline{\al}_1+\overline{\al}_2\in R_{\fr{t}}^{+}$. 
 Thus we obtain the symmetric $\fr{t}$-triples $\pm(\overline{\al}_1, \xi-\overline{\al}_1, -\xi)=\pm\big(\overline{\al}_1, \overline{\al}_2+\overline{\al}_3, -(\overline{\al}_1+\overline{\al}_2+\overline{\al}_3)\big)$, and $\pm(\overline{\al}_3, \xi-\overline{\al}_3, -\xi)=\pm\big(\overline{\al}_3, \overline{\al}_1+\overline{\al}_2, -(\overline{\al}_1+\overline{\al}_2+\overline{\al}_3\big)$.
   Also we can define the symmetric $\fr{t}$-triples $\pm\big(\overline{\al}_1, \overline{\al}_2, -(\overline{\al}_1+\overline{\al}_2)\big)$ and $\pm\big(\overline{\al}_2, \overline{\al}_3, -(\overline{\al}_2+\overline{\al}_3)\big)$. 
Thus the only non-zero structure constants are $c_{12}^{3}$, $c_{23}^{5}$, $c_{15}^{6}$, $c_{34}^{6}$, and their symmetries. }
  \end{example}

   \begin{remark}\label{corres}
   \textnormal{Due to Proposition \ref{GT} one  can  establish a bijective correspondence between
   symmetric $\fr{t}$-triples in the $\ell$-dimensional vector space $\fr{t}^{*}$ and triples of roots    with zero sum:
   \begin{equation}\label{onetoonee}
   \left\{
   \begin{tabular}{c}
   symmetric $\fr{t}$-triples \\
   $\overline{\al}+\overline{\be}+\overline{\gamma}=0$ \\
   $\overline{\al}, \overline{\be}, \overline{\gamma}\in R_{\fr{t}}$
   \end{tabular}\right\} \quad \Longleftrightarrow \quad
   \left\{
   \begin{tabular}{c}
   triples of roots with zero sum \\
   $\al+\be+\gamma=0$ \\
   $\al, \be, \gamma\in R$
   \end{tabular}\right\}
   \end{equation}
    \noindent  This correspondence shows that Theorem \ref{chrysik2} is the formulation of Lemma   \ref{carter}   in terms of $\fr{t}$-roots. Also (\ref{onetoonee}) make possible the rewritting of all  results about synmmetric $\fr{t}$-triples on full flag manifolds, just in terms of roots.  For axample, based on Proposition \ref{numberfull},  one can denote the triple   associated to the submodules $\fr{m}_{\al}, \fr{m}_{\be}$, and $\fr{m}_{\gamma}$ of $\fr{m}$, by $\displaystyle{ \gamma \brack \al \ \beta}$. Then}
  \end{remark}

   \begin{corol}\label{fullnew}
 Let $M=G/T$ be a full flag manifold of a compact simple Lie group \ $G$  and let   $\fr{m}=\sum_{\al\in R^{+}}\fr{m}_{\al}$ be the associated $( \cdot , \cdot )$-orthogonal decomposition of $\fr{m}$ into pairwise inequivalent irreducible $\ad(T)$-modules.  Then,  $\displaystyle{ \gamma \brack \al \ \be}\neq 0$, if and only if  $\al+\be-\gamma=0$.
   \end{corol}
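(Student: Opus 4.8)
The plan is to derive Corollary \ref{fullnew} as a direct translation of Corollary \ref{new} through the bijection of Proposition \ref{GT}, so essentially no new work is needed beyond bookkeeping with the restriction map $\kappa$. First I would recall that for a full flag manifold $M=G/T$ we have $\Pi_{K}=\emptyset$, hence $R_{K}=\emptyset$, $R_{M}=R$, and by Proposition \ref{GT} the map $\kappa:\fr{h}^{*}\to\fr{t}^{*}$ restricts to a bijection between $R$ and $R_{\fr{t}}$. Under this identification the irreducible submodule $\fr{m}_{\al}=(\fr{m}_{\kappa(\al)}\oplus\fr{m}_{-\kappa(\al)})^{\tau}$ attached to a positive root $\al\in R^{+}$ corresponds precisely to the irreducible $\ad(\fr{k})$-module indexed by the $\fr{t}$-root $\kappa(\al)$, as in the decomposition (\ref{full}); and the notation $\displaystyle{\gamma \brack \al\ \be}$ is, by definition of Remark \ref{corres}, exactly the structure constant $c_{ij}^{k}$ for the indices $i,j,k$ with $\fr{m}_{i}=\fr{m}_{\al}$, $\fr{m}_{j}=\fr{m}_{\be}$, $\fr{m}_{k}=\fr{m}_{\gamma}$.

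Next I would invoke Corollary \ref{new}: for these submodules one has $c_{ij}^{k}\neq 0$ if and only if the $\fr{t}$-roots $\kappa(\al),\kappa(\be),\kappa(\gamma)$ form a symmetric $\fr{t}$-triple, i.e. $\kappa(\al)+\kappa(\be)+\kappa(\gamma)=0$. The one remaining point is to untwist the sign convention: in Corollary \ref{fullnew} the module attached to $\gamma$ enters as $\fr{m}_{\gamma}$ with $\gamma\in R^{+}$, but $\fr{m}_{\gamma}=(\fr{m}_{\kappa(\gamma)}\oplus\fr{m}_{-\kappa(\gamma)})^{\tau}$ is insensitive to replacing $\kappa(\gamma)$ by $-\kappa(\gamma)$. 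Thus the condition $\kappa(\al)+\kappa(\be)+\kappa(\gamma)=0$ for some choice of signs is equivalent, after relabelling the third entry, to $\kappa(\al)+\kappa(\be)-\kappa(\gamma)=0$, and by linearity of $\kappa$ together with the injectivity of $\kappa$ on $R\cup\{0\}$ (Proposition \ref{GT}, whose kernel is trivial here since $R_{K}=\emptyset$), this holds precisely when $\al+\be-\gamma=0$ in $R$. Substituting back into Corollary \ref{new} gives $\displaystyle{\gamma\brack\al\ \be}\neq 0$ iff $\al+\be-\gamma=0$, which is the assertion.

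I do not expect a genuine obstacle here; the only subtlety worth spelling out carefully is the sign/orientation convention, namely making explicit that $\fr{m}_{\al}$ depends only on the unordered pair $\{\kappa(\al),-\kappa(\al)\}$, so that a symmetric $\fr{t}$-triple $\kappa(\al)+\kappa(\be)+\kappa(\gamma)=0$ and the relation $\al+\be-\gamma=0$ describe the same triple of modules. One should also note that the statement is vacuously consistent in the excluded rank-one case $SU(2)/U(1)=\bb{C}P^{1}$, where $R^{+}$ has a single element and there are no triples, in agreement with $\mathcal{T}=\emptyset$ from Theorem \ref{height1}. The whole proof is therefore two or three lines: cite Proposition \ref{GT} to identify roots with $\fr{t}$-roots, cite Corollary \ref{new} for the structure-constant criterion, and observe that under $\kappa$ the symmetric $\fr{t}$-triple condition becomes $\al+\be-\gamma=0$.
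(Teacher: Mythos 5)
Your proposal is correct and follows essentially the same route as the paper: the paper's own proof simply cites the correspondence (\ref{onetoonee}) of Remark \ref{corres} (which is Proposition \ref{GT} combined with the symmetric $\fr{t}$-triple criterion of Corollary \ref{new}) and then makes the identical observation that, since $\al,\be,\gamma$ are all taken positive in the decomposition (\ref{full}), the zero-sum condition must be written as $\al+\be-\gamma=0$. Your extra remarks on the insensitivity of $\fr{m}_{\gamma}$ to the sign of $\kappa(\gamma)$ and on the excluded case $\bb{C}P^{1}$ are consistent with, and slightly more explicit than, what the paper records.
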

  \begin{proof}
  This result is based on the correspondence (\ref{onetoonee}). Note that we have $\al+\be-\gamma=0$ instead of $\al+\be+\gamma=0$, since in the triple $\displaystyle { \gamma \brack \al \ \be}$  associated to the submodules $\fr{m}_{\al}, \fr{m}_{\be}$  and $\fr{m}_{\gamma}$, the roots $\al, \be$ and $\gamma$ are positive by assumption (see Proposition \ref{numberfull}).
  \end{proof}

\begin{example}\label{g2t}
\textnormal{Consider the full flag manifold $\G_2/T$. The root system of the exceptional Lie group $\G_2$  is given by $R=\{\mp(2e_2+e_3), \mp(e_2+2e_3), \pm(e_2-e_3), \mp(e_2+e_3), \mp e_2, \mp e_3\}$ (cf. \cite{AA}).  We fix a system of simple roots to be  $\Pi=\{\al_{1}=e_{2}-e_{3}, \al_{2}=-e_2\}$.  With repsect to $\Pi$ the positive roots are given by     \begin{equation}\label{positiveG2}
   R^{+}=\{\al_{1}, \al_{2}, \al_{1}+\al_{2}, \al_{1}+2\al_{2}, \al_{1}+3\al_{2}, 2\al_{1}+3\al_{2}\}.
   \end{equation}
    The  maximal root  is $\widetilde{a}=2\al_1+3\al_2$.    The angle between $\al_1$ and $\al_2$ is $5\pi/6$ and  we have $\left\|\al_1\right\|=\sqrt{3}\left\|\al_2\right\|$. We set $\Pi_{M}=\Pi$. It determines the full flag manifold $\G_{2}/T$. The set of complemetary roots $R_{M}$ coincide with the root system $R$,  i.e. $R=R_{M}$.  A $\fr{t}$-base is given by $\Pi_{\fr{t}}=\{\overline{\al}_{1}, \overline{\al}_{2}\}$, thus  for any $\al\in R^{+}$, we have $\xi=\kappa(\al)=c_{1}\overline{\al}_{1}+c_2\overline{\al}_{2}\in R_{\fr{t}}^{+}$, where $0\leq c_1\leq  2$ and $0\leq c_2\leq 3$. Thus, the positive $\fr{t}$-roots are given as follows: $R_{\fr{t}}^{+}=\{\xi_1=\overline{\al}_{1}, \ \xi_2=\overline{\al}_{2}, \ \xi_3=\overline{\al}_{1}+\overline{\al}_{2}, \ \xi_4=\overline{\al}_{1}+2\overline{\al}_{2}, \
\xi_5=\overline{\al}_{1}+3\overline{\al}_{2}, \ \xi_6=2\overline{\al}_{1}+3\overline{\al}_{2}\}$. 
 The isotropy representation $\fr{m}$ of $\G_2/T$ decomposes into six    inequivalent irreducible $\ad(\fr{t})$- submodules, that is 
   $\fr{m}=\fr{m}_1\oplus\fr{m}_2\oplus\fr{m}_3\oplus\fr{m}_4\oplus\fr{m}_5\oplus\fr{m}_6$, where any  isotropy summand  $\fr{m}_{\al}, (\al\in R^{+})$  admits the following expression:
 \begin{equation}\label{submodules}
 \left.
 \begin{tabular}{lll}
$\fr{m}_1= \mathbb{R}A_{\al_{1}}+\mathbb{R}B_{\al_1}$, & $\fr{m}_3= \mathbb{R}A_{\al_1+\al_2}+\mathbb{R}B_{\al_1+\al_2}$, &    $\fr{m}_5= \mathbb{R}A_{\al_1+3\al_2}+\mathbb{R}B_{\al_1+3\al_2}$\\
$\fr{m}_2= \mathbb{R}A_{\al_{2}}+\mathbb{R}B_{\al_2}$, & $\fr{m}_4=\mathbb{R}A_{\al_1+2\al_2}+\mathbb{R}B_{\al_1+2\al_2}$, &  
$\fr{m}_6=  \mathbb{R}A_{2\al_1+3\al_2}+\mathbb{R}B_{2\al_1+3\al_2}$.  
\end{tabular}\right\}
\end{equation}
}
\textnormal{Let now determine the non-zero structure constants  $\displaystyle {k \brack ij}$ of $\G_2/T$.   Due to Corollary \ref{fullnew}, it is sufficient to determine all triples of roots    $(\al, \be, \gamma)$ with zero sum, that is   $\al+\be+(-\gamma)=0$, for example.  By using  relations (\ref{positiveG2}) and (\ref{submodules}),  we get
\[
\begin{tabular}{ll}
   $\al_1+\al_2+(-(\al_1+\al_2))=0$, & $\al_1+(\al_1+3\al_2)+(-(2\al_1+3\al_2))=0$, \\ 
$\al_2+(\al_1+\al_2)+(-(\al_1+2\al_2))=0$,    & $(\al_1+\al_2)+(\al_1+2\al_2)+(-(2\al_1+3\al_2))=0$. \\   
$\al_2+(\al_1+2\al_2)+(-(\al_1+3\al_2))=0$, &  
\end{tabular}
\]
Thus, the only non-zero structure constants of $\G_2/T$ are the following (for their values, see \cite{ACS3}):
{\small{\[
 { \al_1+\al_2 \brack \al_1 \ \al_2}={ 3 \brack 1 2}, 
  { \al_1+2\al_2 \brack \al_2 \  \al_1+\al_2}={ 4 \brack 2 3}, 
  { \al_1+3\al_2 \brack \al_2 \  \al_1+2\al_2}={ 5 \brack 2 4}, 
   { 2\al_1+3\al_2 \brack \al_1 \  \al_1+3\al_2}={ 6 \brack 1 5}, 
 { 2\al_1+3\al_2 \brack \al_1+\al_2 \  \al_1+2\al_2}={ 6 \brack 34}.
  \] }} 
  }
\end{example}


 \markboth{Ioannis Chrysikos}{Flag manifolds, symmetric $\fr{t}$-triples and Einstein metrics}
 \section{Homogeneous Einstein metrics on  flag manifolds  with five isotropy summands}
 \markboth{Ioannis Chrysikos}{Flag manifolds, symmetric $\fr{t}$-triples and Einstein metrics}

Let us now proceed with the final part of the present work, the investigation of homogeneous Einstein metrics on flag manifolds  $G/K$ of the simple Lie group $G=\SO(7)$, whose isotropy representation decomposes into five pairwise inequivalent irreducible $K$-modules, i.e. $\fr{m}=\fr{m}_{1}\oplus\fr{m}_{2}\oplus\fr{m}_{3}\oplus\fr{m}_{4}\oplus\fr{m}_{5}$.   These spaces $M=\SO(7)/K$ are obtained by painting black two simple roots in the Dynkin diagram of $\SO(7)$.  This means that  $b_{2}(M)=2$, but the converse is not true, that is, there is also a flag manifold  $M=\SO(7)/K$ with $b_{2}(M)=2$, with four isotropy summands.   Indeed, by painting  black two simple roots in the Dynkin diagram of $\SO(7)$ we get  three possible cases:
\[
 \begin{picture}(50,20)(40,-8)
  \put(46, 0){\circle*{4}}
\put(46,8.5){\makebox(0,0){$\al_{1}$}}
   \put(48, 0.5){\line(1,0){14}}
\put(62, 0){\circle*{4}}
\put(62,8.5){\makebox(0,0){$\al_{2}$}}
\put(62,-10){\makebox(0,0){ $\bold{(I)}$}}
\put(63.8, 1.1){\line(1,0){15.5}}
\put(63.8, -0.7){\line(1,0){15.5}}
\put(76.5, -1.9){\scriptsize $>$}
\put(83.6, 0){\circle{4}}
 \put(84,8.5){\makebox(0,0){$\al_{3}$}}
\end{picture}, 
\quad   \mbox{or} \quad \begin{picture}(50,20)(40,-8)
  \put(46, 0){\circle*{4}}
\put(46,8.5){\makebox(0,0){$\al_{1}$}}
   \put(48, 0.5){\line(1,0){12}}
\put(62, 0){\circle{4}}
\put(62,8.5){\makebox(0,0){$\al_{2}$}}
\put(62,-10){\makebox(0,0){ $\bold{(II)}$}}
\put(63.8, 1.1){\line(1,0){15.5}}
\put(63.8, -0.7){\line(1,0){15.5}}
\put(76.5, -1.9){\scriptsize $>$}
\put(83.6, 0){\circle*{4}}
 \put(84,8.5){\makebox(0,0){$\al_{3}$}}
\end{picture}, 
\quad    \mbox{or} \quad \begin{picture}(50,20)(40,-8)
  \put(46, 0){\circle{4}}
\put(46,8.5){\makebox(0,0){$\al_{1}$}}
   \put(48, 0.5){\line(1,0){14}}
\put(62, 0){\circle*{4}}
\put(62,8.5){\makebox(0,0){$\al_{2}$}}
\put(62,-10){\makebox(0,0){ $\bold{(III)}$}}
\put(63.8, 1.1){\line(1,0){15.5}}
\put(63.8, -0.7){\line(1,0){15.5}}
\put(76.5, -1.9){\scriptsize $>$}
\put(83.6, 0){\circle*{4}}
 \put(84,8.5){\makebox(0,0){$\al_{3}$}}
\end{picture}.
\]
The  painted Dynkin diagrams (I), (II) and (III) determine  the same coset $\SO(7)/K=\SO(7)/\U(1)^{2}\times \SU(2)$, with the difference that $\SU(2)$ is generated by the simple roots $\al_3$, $\al_2$ and $\al_1$, respectively.  The flag manifold which is defined by the PDD (I), is given also by $\SO(7)/\U(1)^{2}\times \SO(3)$ and it  belongs to the family $\SO(2\ell+1)/\U(1)^{2}\times \SO(2\ell-3)$, which has four isotropy summands  \cite[Prop. 5]{Chry2}. Homogeneous Einstein metrics on this manifold  have been classified in \cite[Theor. 6]{Chry2}. The  flag space  which is defined by the PDD (II)    is also presented by $\SO(7)/\U(1)\times \U(2)$ and it  belongs to  the family $\SO(2\ell+1)/\U(1)\times \U(\ell-1)$.  This  is   defined by the subset $\Pi_{M}=\{\al_1, \al_{\ell}\}\subset\Pi$ of a system of simple roots $\Pi=\{\al_1, \ldots, \al_{\ell}\}$ corresponding to $\SO(2\ell+1)$, and it  has five isotropy summands \cite[Prop. 6]{Chry2}.  The   flag manifold  defined by the PDD (III)  has also five isotropy summands  and it belongs to the family $\SO(2\ell+1)/\U(1)^{2}\times \SU(\ell-1)$, which is defined by the set $\Pi_{M}=\{\al_{\ell-1}, \al_{\ell}\}\subset\Pi$.   
 
\subsection{The congruence of the flag manifolds defined by the PDD (II) and (III)} 
  Next we will see that the flag manifolds $\SO(7)/\U(1)\times \U(2)$ and $\SO(7)/\U(1)^{2}\times \SU(2)$,  defined by the subsets $\Pi_{M}=\{\al_1, \al_3\}$ and $\Pi_{M}=\{\al_2, \al_3\}$, respectively, apart from  diffeomorphic they are also isometric (as real manifolds). First we   fix some notation.     Let $R=\{\pm e_{i}\pm e_{j} : 1\leq i\neq j\leq 3\}\cup\{\pm e_{i} : 1\leq i\leq 3\}
$
 be the root system of   $\fr{g}^{\bb{C}}=\fr{so}(7, \bb{C})$.  We fix, once and for all, a system of simple roots  
$
\Pi=\{\al_1=e_1-e_2, \al_2=e_2-e_3, \al_3=e_3\}
$
  for  $R$, and we denote by 
  $
  R^{+}=\{\al_1, \al_2, \al_3, \al_1+\al_2, \al_2+\al_3, \al_2+2\al_3, \al_1+\al_2+\al_3,  \al_1+\al_2+2\al_3, \al_1+2\al_2+2\al_3\}
 $
   the corresponding set of positive roots. Note that  $\varphi(\al_1, \al_1)=\varphi(\al_2, \al_2)=2$, $\varphi(\al_3, \al_3)=1$, $\varphi(\al_1, \al_2)=\varphi(\al_2, \al_3)=-1$, and $\varphi(\al_1, \al_3)=0$  (cf. \cite{Cart}).
   Recall also that    the roots of   $\fr{so}(7, \bb{C})$ are divided into two classes with repect to their length, namely  the long roots $\{\pm e_{i}\pm e_{j} : 1\leq i\neq j\leq 3\}$, and  the short roots    $\{\pm e_{i} : 1\leq i\leq 3\}$ (cf. \cite[p.~ 147]{Cart}). For convenience we set
\begin{equation}\label{ls}
\left.\begin{tabular}{rll}
  positive long roots &:& $\mathcal{L}=\{\al_1, \al_2, \al_1+\al_2, \al_2+2\al_3, \al_1+\al_2+2\al_3, \al_1+2\al_2+2\al_3\}$ \\
 positive short roots  &:& $\mathcal{S}=\{\al_3, \al_2+\al_3, \al_1+\al_2+\al_3\}$  
 \end{tabular}\right\} 
 \end{equation} 
 In Table 4   we present the most important feautures of the flag manifolds $\SO(7)/K$ defined by the PDD (II) and (III). 
   In part (a),    we describe the   root system  $R_{K}$ of the  semismple part $\fr{su}(2, \bb{C})$ of   $\fr{k}^{\bb{C}}=\fr{su}(2, \bb{C})\oplus \bb{C}\oplus\bb{C}$, the associated positive complementary roots $R_{M}^{+}$, a $\fr{t}$-basis $\Pi_{\fr{t}}$, and the corresponding positive $\fr{t}$-root system $R_{\fr{t}}^{+}$. In part (b)  we state the reductive decomposition   of   $\fr{g}=T_{e}\SO(7)$ with respect to  the $\Ad(K)$-invariant inner product $( \cdot , \cdot )=-\varphi( \cdot , \cdot )$, and we apply relation (\ref{bas}) to  determine the corresponding  irreducible $\ad(\fr{k})$-sumbodules. In part (c)  we present the corresponding symmetric $\fr{t}$-triples and  finally,  in part (d),     we   give the associated  non-zero structure constants $c_{ij}^{k}$.
  \begin{center}
 {\bf Table 4.} \ {\small The flag manifolds of $\SO(7)$ with five isotropy summands}
 \end{center}
  {\small{\begin{center}
\begin{tabular}{l|l|l}
& PDD (II) $\qquad  \begin{picture}(50,5)(20,-2)
  \put(46, 0){\circle*{4}}
   \put(48, 0.5){\line(1,0){12}}
\put(62, 0){\circle{4}}
\put(63.8, 1.1){\line(1,0){15.5}}
\put(63.8, -0.7){\line(1,0){15.5}}
\put(76.5, -1.9){\scriptsize $>$}
\put(83.6, 0){\circle*{4}}
\end{picture}$ & PDD (III) $\qquad \begin{picture}(50,5)(20,-2)
  \put(46, 0){\circle{4}}
   \put(48, 0.5){\line(1,0){14}}
\put(62, 0){\circle*{4}}
\put(63.8, 1.1){\line(1,0){15.5}}
\put(63.8, -0.7){\line(1,0){15.5}}
\put(76.5, -1.9){\scriptsize $>$}
\put(83.6, 0){\circle*{4}}
\end{picture}$ \\
\hline 
 (a) & $M=\SO(7)/\U(1)\times \U(2)$, $\fr{m}=T_{o}M$  &  $M=\SO(7)/\U(1)^{2}\times \SU(2)$, $\fr{n}=T_{o}M$ \\
 \thickline 
&  $\Pi_{M}=\{\al_1, \al_3\}, \Pi_{K}=\{\al_2\}, R_{K}=\{\pm\al_2\}$ & $\Pi_{M}=\{\al_2, \al_3\}, \Pi_{K}=\{\al_{1}\}, R_{K}=\{\pm\al_1\}$ \\
& $R_{M}^{+}= \{\al_1, \al_3, \al_1+\al_2, \al_2+\al_3, \al_2+2\al_3, $ & $R_{M}^{+}=\{\al_2, \al_3, \al_1+\al_2, \al_2+\al_3, \al_2+2\al_3, $ \\
  & \ \ \ \ \   $\al_1+\al_2+\al_3,  \al_1+\al_2+2\al_3, \al_1+2\al_2+2\al_3\}$ & \ \ \ \ \  $\al_1+\al_2+\al_3,  \al_1+\al_2+2\al_3, \al_1+2\al_2+2\al_3\}$\\ 
 & $\Pi_{\fr{t}}=\{\overline{\al}_{1}=\al_{1}|_{\fr{t}}, \overline{\al}_{3}=\al_{3}|_{\fr{t}}\}$ &      $\Pi_{\fr{t}}=\{\overline{\al}_{2}=\al_{2}|_{\fr{t}}, \overline{\al}_{3}=\al_{3}|_{\fr{t}}\}$  \\
&  $R_{\fr{t}}^{+}=\{\overline{\al}_{1}, \overline{\al}_{3}, \overline{\al}_{1}+\overline{\al}_{3}, 2\overline{\al}_{3}, \overline{\al}_{1}+2\overline{\al}_{3}\}$ & $R_{\fr{t}}^{+}=\{\overline{\al}_{2}, \overline{\al}_{3}, \overline{\al}_{2}+\overline{\al}_{3}, \overline{\al}_{2}+2\overline{\al}_{3}, 2\overline{\al}_{2}+2\overline{\al}_{3}\}$ \\
 \hline
 (b) &  $\fr{m}=\fr{m}_1\oplus\fr{m}_2\oplus\fr{m}_3\oplus\fr{m}_4\oplus\fr{m}_{5}$ &  $\fr{n}=\fr{n}_1\oplus\fr{n}_2\oplus\fr{n}_3\oplus\fr{n}_4\oplus\fr{n}_{5}$ \\
  \thickline 
 & $\fr{m}_1=  \displaystyle\sum_{\al\in R_{M}^{+} : \kappa(\al)=\overline{\al}_{1}}(\bb{R}A_{\al}+\bb{R}B_{\al})$ & $\fr{n}_1=  \displaystyle\sum_{\al\in R_{M}^{+} : \kappa(\al)=\overline{\al}_{2}}(\bb{R}A_{\al}+\bb{R}B_{\al})$ \\
 & $\fr{m}_2=  \displaystyle\sum_{\al\in R_{M}^{+} : \kappa(\al)=\overline{\al}_{3}}(\bb{R}A_{\al}+\bb{R}B_{\al})$ & $\fr{n}_2=  \displaystyle\sum_{\al\in R_{M}^{+} : \kappa(\al)=\overline{\al}_{3}}(\bb{R}A_{\al}+\bb{R}B_{\al})$ \\
 & $\fr{m}_3=  \displaystyle  \sum_{\al\in R_{M}^{+} : \kappa(\al)=\overline{\al}_{1}+\overline{\al}_{3}}(\bb{R}A_{\al}+\bb{R}B_{\al})$ & $\fr{n}_3=  \displaystyle  \sum_{\al\in R_{M}^{+} : \kappa(\al)=\overline{\al}_{2}+\overline{\al}_{3}}(\bb{R}A_{\al}+\bb{R}B_{\al})$ \\
 & $\fr{m}_4=  \displaystyle \sum_{\al\in R_{M}^{+} : \kappa(\al)=2\overline{\al}_{3}}(\bb{R}A_{\al}+\bb{R}B_{\al})$ &  $\fr{n}_4=  \displaystyle \sum_{\al\in R_{M}^{+} : \kappa(\al)=\overline{\al}_{2}+2\overline{\al}_{3}}(\bb{R}A_{\al}+\bb{R}B_{\al})$ \\
 & $\fr{m}_5=  \displaystyle \sum_{\al\in R_{M}^{+} : \kappa(\al)=\overline{\al}_{1}+2\overline{\al}_{3}}(\bb{R}A_{\al}+\bb{R}B_{\al})$ & $\fr{n}_5=  \displaystyle \sum_{\al\in R_{M}^{+} : \kappa(\al)=2\overline{\al}_{2}+2\overline{\al}_{3}}(\bb{R}A_{\al}+\bb{R}B_{\al})$ \\
  \hline 
 (c) & symmetric $\fr{t}$-triples & symmetric $\fr{t}$-triples \\
 \thickline 
 &  $\big(\overline{\al}_1, \overline{\al}_3, -(\overline{\al}_1+\overline{\al}_3)\big)$, & $\big(\overline{\al}_2, \overline{\al}_3, -(\overline{\al}_2+\overline{\al}_3)\big)$,   \\
 &  $\big(\overline{\al}_3, \overline{\al}_3, - 2\overline{\al}_3\big)$, & $\big(\overline{\al}_3, \overline{\al}_2+\overline{\al}_3, -(\overline{\al}_2+2\overline{\al}_3)\big)$, \\
 & $\big(\overline{\al}_1,   2\overline{\al}_3, -(\overline{\al}_1+2\overline{\al}_3)\big)$,  & $\big(\overline{\al}_2, \overline{\al}_2+2\overline{\al}_3, -(2\overline{\al}_2+2\overline{\al}_3)\big)$,  \\
 &  $\big(\overline{\al}_3, \overline{\al}_1+\overline{\al}_3, -(\overline{\al}_1+2\overline{\al}_3)\big)$ & $\big(\overline{\al}_2+\overline{\al}_3, \overline{\al}_2+\overline{\al}_3, -(2\overline{\al}_2+2\overline{\al}_3)\big)$  \\
 \hline 
 (d) & non-zero structure constants & non-zero structure constants \\
 \thickline  
 &  $c_{12}^{3}$, $c_{22}^{4}$, $c_{14}^{5}$, $c_{23}^{5}$ & $c_{12}^{3}$, $c_{23}^{4}$, $c_{14}^{5}$, $c_{33}^{5}$ \\ 
\hline
 \end{tabular}
 \end{center} }}
 \vskip 0.5cm
 \begin{center}
 {\bf Table 5.} \ {\small The isotropy summands and their  type  with respect to the length of  roots}
 \end{center} 
   {\small{ \begin{center}
  \begin{tabular}{ l|l|c}
    isotropy decomposition $\fr{m}=\fr{m}_1\oplus\fr{m}_{2}\oplus\fr{m}_{3}\oplus\fr{m}_{4}\oplus\fr{m}_{5}$ &  type or roots &   $d_{i}=\dim\fr{m}_{i}$  \\
\thickline
    $\fr{m}_{1}=\Span\{A_{\al_1}+B_{\al_{1}}, A_{\al_1+\al_2}+B_{\al_1+\al_2}\}$ & long & 4\\
   $\fr{m}_{2}=\Span\{A_{\al_3}+B_{\al_{3}}, A_{\al_2+\al_3}+B_{\al_2+\al_3}\}$ & short & 4 \\
  $\fr{m}_{3}=\Span\{A_{\al_1+\al_2+\al_3}+B_{\al_{1}+\al_{2}+\al_{3}}\}$ & short & 2 \\ 
  $\fr{m}_{4}=\Span\{A_{\al_2+2\al_3}+B_{\al_{2}+2\al_{3}}\}$ & long & 2 \\ 
  $\fr{m}_{5}=\Span\{A_{\al_1+\al_2+2\al_3}+B_{\al_1+\al_{2}+2\al_{3}},   A_{\al_1+2\al_2+2\al_3}+B_{\al_1+2\al_{2}+2\al_{3}}\}$ & long & 4 \\
\hline 
  isotropy decomposition $\fr{n}=\fr{n}_1\oplus\fr{n}_{2}\oplus\fr{n}_{3}\oplus\fr{n}_{4}\oplus\fr{n}_{5}$ &   type of roots &  $d_{i}'=\dim\fr{n}_{i}$  \\
\thickline
  $\fr{n}_{1}=\Span\{A_{\al_2}+B_{\al_{2}}, A_{\al_1+\al_2}+B_{\al_1+\al_2}\}$ & long & 4 \\
   $\fr{n}_{2}=\Span\{A_{\al_{3}}+B_{\al_{3}}\}$ & short & 2 \\
  $\fr{n}_{3}=\Span\{A_{\al_2+\al_3}+B_{\al_2+\al_3}, A_{\al_{1}+\al_2+\al_3}+B_{\al_{1}+\al_{2}+\al_{3}}\}$ & short & 4 \\ 
  $\fr{n}_{4}=\Span\{A_{\al_1+\al_2+2\al_3}+B_{\al_1+\al_{2}+2\al_{3}}, A_{\al_2+2\al_3}+B_{\al_{2}+2\al_{3}}\}$ & long  & 4\\ 
  $\fr{n}_{5}=\Span\{A_{\al_1+2\al_2+2\al_3}+B_{\al_1+2\al_{2}+2\al_{3}}\}$ & long & 2 
\end{tabular} \end{center}}}

   \begin{prop}\label{desired}
The flag manifolds $\SO(7)/\U(1)\times \U(2)\cong \SO(7)/\U(1)^{2}\times \SU(2)$ defined by the painted Dynkin diagrams (II) and (III), respectively, are isometric (as real manifolds). 
\end{prop}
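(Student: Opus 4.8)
The plan is to produce an element of the Weyl group of $SO(7)$ conjugating one isotropy subgroup onto the other, and then to check that the resulting inner automorphism of $SO(7)$ descends to an isometry that matches up the whole families of invariant metrics of part (b) of Table 4. Throughout write $K_{1}$, $\fr{m}$ for the data of PDD (II) (so $\Pi_{K_{1}}=\{\al_{2}\}$, $R_{K_{1}}=\{\pm\al_{2}\}$) and $K_{2}$, $\fr{n}$ for the data of PDD (III) (so $\Pi_{K_{2}}=\{\al_{1}\}$, $R_{K_{2}}=\{\pm\al_{1}\}$), with central tori $S_{1}\subset K_{1}$, $S_{2}\subset K_{2}$.

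First I would use that $\al_{1}$ and $\al_{2}$ are long roots of $B_{3}$ and hence lie in a single Weyl orbit: with the conventions $\al_{1}=e_{1}-e_{2}$, $\al_{2}=e_{2}-e_{3}$, $\al_{3}=e_{3}$ of Section 5, the cyclic permutation $w=(e_{1}\,e_{2}\,e_{3})$ of the ambient coordinates lies in the Weyl group $W$ of $SO(7)$ and satisfies $w(\al_{1})=\al_{2}$, hence $w(R_{K_{2}})=R_{K_{1}}$. By the descriptions (\ref{ss}) and (\ref{tcent}) of $\fr{k}^{\bb{C}}$ and of $\fr{t}=Z(\fr{k})$, this gives $w(\fr{t}_{2})=w(\{H\in\fr{h}:\al_{1}(H)=0\})=\{H\in\fr{h}:\al_{2}(H)=0\}=\fr{t}_{1}$, i.e. $w(S_{2})=S_{1}$. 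Lifting $w$ to a representative $n_{w}\in N_{SO(7)}(T)$ (possible since $SO(7)$ is connected), the inner automorphism $\theta=\Ad(n_{w})$ of $\fr{so}(7)$ sends $\fr{g}_{\al}^{\bb{C}}$ to $\fr{g}_{w\al}^{\bb{C}}$ for every root, hence $\theta(\fr{k}_{2}^{\bb{C}})=\fr{k}_{1}^{\bb{C}}$ and, on the compact real form, $\theta(\fr{k}_{2})=\fr{k}_{1}$. Since the centralizer of a torus in a compact connected Lie group is connected, this upgrades to $n_{w}K_{2}n_{w}^{-1}=C_{SO(7)}(S_{1})=K_{1}$.

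Next I would descend the conjugation $\mathrm{Conj}_{n_{w}}\colon SO(7)\to SO(7)$ to a diffeomorphism $F\colon SO(7)/K_{2}\to SO(7)/K_{1}$, $gK_{2}\mapsto n_{w}gn_{w}^{-1}K_{1}$, whose differential at the base point is $\Ad(n_{w})|_{\fr{n}}\colon\fr{n}\to\fr{m}$. Because $\Ad(n_{w})$ preserves the Killing form, it maps $\fr{n}=\fr{k}_{2}^{\perp}$ isometrically onto $\fr{m}=\fr{k}_{1}^{\perp}$ relative to $(\cdot,\cdot)=-\varphi(\cdot,\cdot)$; by $SO(7)$-equivariance $F$ is then a global isometry for the normal metrics. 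To get the stronger statement, I would track the action of $w=(e_{1}\,e_{2}\,e_{3})$ on $R_{M}^{+}$ and restrict to $\fr{t}$ via (\ref{troots}); this shows $\Ad(n_{w})$ permutes the five summands of Table 4(b) by $\fr{n}_{1}\mapsto\fr{m}_{1}$, $\fr{n}_{2}\mapsto\fr{m}_{3}$, $\fr{n}_{3}\mapsto\fr{m}_{2}$, $\fr{n}_{4}\mapsto\fr{m}_{5}$, $\fr{n}_{5}\mapsto\fr{m}_{4}$, i.e. by $\sigma=(2\,3)(4\,5)$, which is exactly the (dimension-preserving) relabelling carrying the de Siebenthal invariant and structure constants of PDD (III) in Table 4(d) onto those of PDD (II). Consequently $F$ identifies the $SO(7)$-invariant metric $(x_{1},x_{2},x_{3},x_{4},x_{5})$ on $SO(7)/K_{1}$ with $(x_{1},x_{3},x_{2},x_{5},x_{4})$ on $SO(7)/K_{2}$; in particular the two families of invariant Einstein metrics, and their isometry classes, agree, which is what is used in the proof of Theorem B.

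The only step that is not completely mechanical is the identification of the permutation $\sigma$: one must check, using the explicit lists of $R_{M}^{+}$ and $R_{\fr{t}}^{+}$ in Table 4, that $w$ induces the claimed bijection $R_{\fr{t},2}^{+}\to R_{\fr{t},1}^{+}$ between the two $\fr{t}$-root systems (equivalently, between the five $\ad(\fr{k})$-submodules). Everything else --- that the long roots of $B_{3}$ form a single Weyl orbit, that $C(S)$ is connected, and that $\Ad(n_{w})$ acts on root spaces as $w$ acts on roots --- is standard.
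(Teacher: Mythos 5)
Your proposal is correct and follows essentially the same route as the paper: the paper also produces a Weyl group element (the composition $s_{2}\circ s_{1}$, which as a permutation of the $e_{i}$ is exactly the inverse of your $w=(e_{1}\,e_{2}\,e_{3})$) and verifies by direct computation on $R_{M}^{+}$ that it induces the pairing $\fr{m}_{1}\leftrightarrow\fr{n}_{1}$, $\fr{m}_{2}\leftrightarrow\fr{n}_{3}$, $\fr{m}_{3}\leftrightarrow\fr{n}_{2}$, $\fr{m}_{4}\leftrightarrow\fr{n}_{5}$, $\fr{m}_{5}\leftrightarrow\fr{n}_{4}$. Your write-up is somewhat more explicit than the paper's about why a Weyl element yields a global isometry of the cosets (lifting to $N(T)$, conjugating $K_{2}$ onto $K_{1}$, equivariance), but the underlying argument is the same.
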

\begin{proof}   
 By combining parts (a) and (b) of Table 4  and by using relation (\ref{ls}),   we have established  in Table 5 a correspondence between the associated  isotropy summands and the type or roots   generating them (with respect to  their length).  We also give the dimensions of these submodules. From the first and the third column  of Table 5, it obvious that  in order to prove our claim is sufficient to construct the following isometries:
\[
 \sigma:   \fr{m}_{1}\to \fr{n}_{1}, \ \ \     \sigma:  \fr{m}_{2}\to  \fr{n}_{3},  \ \ \  \sigma: \fr{m}_{3}\to \fr{n}_{2}, \ \ \
  \sigma: \fr{m}_{4}\to \fr{n}_{5},  \ \ \
  \sigma: \fr{m}_{5}\to \fr{n}_{4}.
 \]
 Then we will have establish the desired isometry $\sigma :\fr{m}\to\fr{n}$.   This map $\sigma :\fr{m}\to\fr{n}$ is  induced by   the Weyl group $\mathcal{W}$  of $\fr{so}(7, \bb{C})$, which acts on the root system $R$  via the reflections  $\{s_{\al} : \al \in R\}$, given by  
 $
 s_{\al}(\be)=\be-\displaystyle\frac{2\varphi(\be, \al)}{\varphi(\al, \al)}\al,  
 $
 for any $ \al, \be\in R$.
In particular, since the irreducible  modules of the following pairs   $(\fr{m}_{1}, \fr{n}_{1})$, $(\fr{m}_{2}, \fr{n}_{3})$, $(\fr{m}_{3}, \fr{n}_{2})$, $(\fr{m}_{4}, \fr{n}_{5})$, and $(\fr{m}_{5}, \fr{n}_{4})$, are generated  in any case   by root vectors corresponding to roots of the same length,  the existence of such an isometry between them,  is a subsequence of the  transitive action of $\mathcal{W}$   on  roots of a given length. This  means, that if we have a pair $(\fr{m}_{i}, \fr{n}_{j})$ with  $\dim\fr{m}_{i}=\dim\fr{n}_{j}$  and we assume for example  that the modules $\fr{m}_{i}$ and $\fr{n}_{j}$ are generating   by the vectors $A_\al+B_{\al}$ and $A_{\be}+B_{\be}$, respectively,  where both $\al, \be$ are such that $\al, \be\in \mathcal{L}$, then we   can always  find an element $w\in\mathcal{W}$ such that $w(\al)=\be$ (or $w(\al)=-\be)$. Since an element of $\mathcal{W}$ induces inner automorphism of our Lie algebra,  the  element $w\in\mathcal{W}$   will determine an isometry $w : \fr{m}
_{i}\to\fr{n}_{j}$ (the same is true for the short roots, too).  Indeed, recall   that the group $\mathcal{W}$ is generated by the simple reflections  $\{s_{1}=s_{\al_{1}}=s_{e_1-e_2}, s_{2}=s_{\al_{2}}=s_{e_2-e_3}, s_{3}=s_{\al_{3}}=s_{e_3}\}$.  For these reflections it is   $s_{\al_{i}}(\al_{j})=s_{i}(\al_{j})=\al_{j}- b_{ij}\al_{i}$, where $B=(b_{ij})$ is the transpose of the Cartan matrix   of  $\fr{so}(7, \bb{C})$, given by  
$
 A=(A_{ij})=\left(2\varphi(\al_{i}, \al_{j})/\varphi(\al_{j}, \al_{j})\right)=\begin{pmatrix}
 2 & -1 & 0 \\
 -1 & 2 & -2 \\
 0 & -1 & 2 
 \end{pmatrix}$. 
 Thus, it is 
  $s_{1}(\al_{1})=-\al_{1}$,  $s_{2}(\al_{1})=\al_1+\al_2$,  $s_{3}(\al_{1})=\al_{1}$, 
  $s_1(\al_{2})=\al_1+\al_2$, $s_{2}(\al_{2})=-\al_2$, $s_{3}(\al_{2})=\al_{2}+2\al_3$,
  $s_{1}(\al_{3})=\al_{3}$, $s_{2}(\al_{3})=\al_2+\al_3$, $s_{3}(\al_{3})=-\al_{3}$  and   we easily compute  that
 {\small{\begin{eqnarray}
  (s_{2}\circ s_{1})(\al_1) &=&  -(\al_1+\al_2) , \label{s1}\\
  (s_{2}\circ s_{1})(\al_1+\al_2) &=&  -\al_2, \label{s2}\\
  (s_{2}\circ s_{1})(\al_3) &=&  \al_2+\al_3, \label{s3}\\
   (s_{2}\circ s_{1})(\al_2+\al_3) &=&  \al_1+\al_2+\al_3, \label{s4}\\ 
  (s_{2}\circ s_{1})(\al_1+\al_2+\al_3) &=&  \al_3, \label{s5}\\
  (s_{2}\circ s_{1})(\al_2+2\al_3)  &=& \al_1+2\al_2+2\al_3, \label{s6}\\
  (s_{2}\circ s_{1})(\al_1+\al_2+2\al_3) &=& \al_2+2\al_3, \label{s7}\\
 (s_{2}\circ s_{1})(\al_1+2\al_2+2\al_3) &=&  \al_1+\al_2+2\al_3. \label{s8}
 \end{eqnarray}}}
The relations  (\ref{s1})-(\ref{s2})  show that $\fr{m}_{1}, \fr{n}_{1}$ are isometric via the reflection $s_{2}\circ s_{1} :\fr{m}_{1}\to\fr{n}_{1}$.  Similar, relations  (\ref{s3})-(\ref{s4})  ensure that  $s_{2}\circ s_{1}$ maps   $\fr{m}_{2}\to\fr{n}_{3}$, relation  (\ref{s5})   implies that $s_{2}\circ s_{1}: \fr{m}_{3}\to\fr{n}_{2}$, and relation  (\ref{s6})  implies that $s_{2}\circ s_{1}: \fr{m}_{4}\to\fr{n}_{5}$. Finally, by  (\ref{s7})-(\ref{s8})  we  conclude that $\fr{m}_{5}, \fr{n}_{4}$ are also isometric via the composition  $s_{2}\circ s_{1}$ and in this way we have define an isometry  $\sigma=s_{2}\circ s_{1} : \fr{m}\to\fr{n}$.      
       Note that under the above isometry $s_{2}\circ s_{1} : \fr{m}\to\fr{n}$,  the symmetric $\fr{t}$-triples and hence the structure constants of the flag manifolds defined by the PDD (II) and (III), are identified.
 \end{proof}

 \subsection{Homogeneous Einstein metrics on $\bold{SO(7)/U(1)\times U(2)\cong SO(7)/U(1)^{2}\times SU(2)}$}
For the construction of the Einstein equation for an $\SO(7)$-invariant Riemannian metric we need to recall the explicit formulae of the Ricci tensor and the scalar curvature for a flag manifold $M=G/K$ in the general case.  

Let $\fr{g}=\fr{k}\oplus\fr{m}$ be a reductive decomposition of $\fr{g}$ with respect to  $( \cdot , \cdot )=-\varphi( \cdot , \cdot)$.  As we have said before,  a  $G$-invariant Riemannian metric $g$ on $M=G/K$ is identified with an $\Ad(K)$-invariant (or $\ad(\fr{k})$-invariant) inner product  $\langle \cdot , \cdot \rangle$ on $\fr{m}$. This inner product can be written by $\langle X, Y\rangle=(AX, Y)$ $(X, Y\in\fr{m})$ for some $\Ad(K)$-invariant positive definite symmetric endomorphism $A : \fr{m}\to\fr{m}$.  Due to Proposition \ref{isotropy},  we can express  $A$ by the equation
$
A=\sum_{\xi\in R_{\fr{t}}^{+}}x_{\xi}\cdot \Id\big|_{(\fr{m}_{\xi}\oplus\fr{m}_{-\xi})^{\tau}},
$
where each element $\{x_{\xi} : \xi\in R_{\fr{t}}^{+}\}$ is an eigenvalue (a positive real number) of $A$,  with associated eigenspace the real $\ad(\fr{k})$-module  $(\fr{m}_{\xi}\oplus\fr{m}_{-\xi})^{\tau}$.    If we assume for simplicity that $\fr{m}=\oplus_{i=1}^{s}\fr{m}_{i}$ is a  $( \cdot , \cdot )$-orthogonal decomposition of $\fr{m}$ into $s$ pairwise inequivalent irreducible $\ad(\fr{k})$-modules $\fr{m}_{i}$ which are given by (\ref{bas}), then  $A$ is given by $A=\sum_{\xi_{i}\in R^{+}_{\fr{t}}}x_{\xi_{i}}\cdot \Id\big|_{\fr{m}_{i}}=\sum_{i=1}^{s}x_{i}\cdot \Id\big|_{\fr{m}_{i}}$, where $x_{i}\equiv x_{\xi_{i}}$ for any $\xi_{i}\in R_{\fr{t}}^{+}=\{\xi_1, \ldots, \xi_{s}\}$.
   Due to the decomposition $\fr{m}_{i}=\sum_{\al\in R_{M}^+ \ :\ \kappa(\al)=\xi_{i}}(\mathbb{R}A_{\al}+\mathbb{R}B_{\al})$, it is obvious that the vectors $\{A_{\al}, B_{\al} : \al\in R_{M}^{+}\}$ are eigenvectors of $A$, corresponding to the eigenvalue $x_{i}\equiv x_{\xi_{i}}$, and thus we also denote this eigenvalue by $x_{\al}\in \bb{R}^{+}$, where $\al\in R_{M}^+$ is such that $\kappa(\al)=\xi_{i}$, for any $1\leq i\leq s$. As usual, we   extend $A$ to a complex linear operator  in $\fr{m}^{\bb{C}}$ without any change in notation. Hence  the inner product $g=\langle \cdot, \cdot \rangle$ admits a natural extension to an $\ad(\fr{k}^{\bb{C}})$-invariant bilinear symmetric form     on $\fr{m}^{\bb{C}}$, and for this one we maintain  the same notation too.  Then,  the   root vectors $\{E_{\al} : \al\in R_{M}\}$ are eigenvectors of   $A : \fr{m}^{\bb{C}} \to \fr{m}^{\bb{C}}$ corresponding to the eigenvalues $x_{\al}=x_{-\al}>0$.  Note that   $x_{\al}=x_{\be}$  
whenever $\al|_{\fr{t}}=\be|_{\fr{t}}$, for any  $\al, \be\in R_{M}^{+}$.   By the $\ad(\fr{k}^{\bb{C}})$-invariance of the metric and the part (ii) of the definition of an invarinat ordering,  we also obtain   $x_{\al}=x_{\al+\be}$ for any $\al, \al+\be\in R_{M}^{+}$ with $\be\in R_{K}$.  

The set of $G$-invariant Riemannian metrics on  $G/K$ is paramatrized by $|R_{\fr{t}}^{+}|$ real numbers, it means that  any $G$-invariant Riemannian metric  $g=\langle \cdot, \cdot \rangle=(A\cdot, \cdot)$ on $G/K$ is given by 
{\small{
\[
g= \langle \cdot, \cdot \rangle = (A\cdot , \cdot)= x_1\cdot ( \cdot , \cdot )|_{\fr{m}_1}+\cdots+x_s\cdot ( \cdot , \cdot )|_{\fr{m}_s},
\]}}
where   $x_1\equiv x_{\xi_{1}}>0, \ldots, x_s \equiv x_{\xi_{s}}>0$.   Next, we will denote such an invariant Riemannian metric by $g=(x_1, \ldots, x_{s})\in\bb{R}_{+}^{s}$.  Note that given a $G$-invariant complex structure $J$ on $M=G/K$  (such a structure is induced by an invariant ordering $R_{M}^{+}$ and it is determined by an $\Ad(K)$-invariant endomorphism $J_{o} : \fr{m}^{\bb{C}}\to\fr{m}^{\bb{C}}$ such that $J_{o}E_{\pm a}=\pm i E_{\pm \al}$ for any $\al\in R_{M}^{+}$),  a $G$-invariant   metric   is K\"ahler with respect to $J$, if and only if the positive real numbers $x_{\xi}$ satisfy the equation $x_{\xi+\zeta}=x_{\xi}+x_{\zeta}$ for any $\xi, \zeta, \xi+\zeta\in R_{\fr{t}}^{+}=\kappa(R_{M}^{+})$. In other words, $g$ is K\"ahler, if and only if $x_{\al+\be}=x_{\al}+x_{\be}$, where $\al, \be, \al+\be\in R_{M}^{+}$ are such that $\kappa(\al)=\xi$ and $\kappa(\be)=\zeta$ (cf. \cite{Ale1}).   

 In view of the decomposition $\fr{m}=\oplus_{k=1}^{s}\fr{m}_{k}$,   the Ricci tensor $\Ric_{g}$ of $(G/K, g=(x_1, \ldots, x_{s}))$, as a $G$-invariant symmetric  covariant  2-tensor on $G/K$, is identified with an $\Ad(K)$-invariant symmetric bilinear form  on $\fr{m}$ and it is given by  $\Ric_{g} = \sum_{k=1}^{s}y_k\cdot ( \cdot , \cdot )|_{\fr{m}_k}$, 
for some $y_{1}, \ldots, y_{s}\in\bb{R}$.     Let $\{e_{j}^{(k)}\}_{j=1}^{d_{k}}$  be an $(\cdot , \cdot)$-orthonormal basis on $\fr{m}_{k}$, where $d_{k}=\dim \fr{m}_{k}$,  for any $k\in\{1, \ldots, s\}$.  Then the set  $\{X_{j}^{(k)}=1/\sqrt{x_{k}}e_{j}^{(k)}\}$ is an $\langle \cdot, \cdot\rangle$-orthonormal basis of $\fr{m}_{k}$, where   $\langle \cdot, \cdot\rangle$ is the $\Ad(K)$-invariant inner product   induced by the $G$-invariant metric $g=(x_1, \ldots, x_s)$ of $M$.   Let $r_{k}$ be the real numbers defined by the equation
    $
    r_k =\Ric_{g}(X_{j}^{(k)}, X_{j}^{(k)})=\Ric_{g}(\frac{e_j^{(k)}}{\sqrt{x_k}}, \frac{e_j^{(k)}}{\sqrt{x_k}}) = (1/x_{k})\Ric_{g} (e_j^{(k)}, e_j^{(k)}),
   $
that is  $\Ric_{g} (e_j^{(k)}, e_j^{(k)}) = x_k r_k$.  Then it obvious that  we can express the Ricci tensor by  $\Ric_{g}=\sum_{k=1}^{s}(x_{k} r_{k})\cdot ( \cdot, \cdot)|_{\fr{m}_{k}}$.  In particular, we have that 
    \begin{prop}\label{Ricc}{\textnormal{(\cite{SP}, \cite{Wa2})}} 
      The components $r_{k}$ of the Ricci tensor of $(G/K, g=(x_1, \ldots, x_{s}))$    are given by
  $
  \displaystyle
   r_{k}=\frac{1}{2x_{k}}+\frac{1}{4d_{k}}\sum_{i, j}\frac{x_{k}}{x_{i}x_{j}}  {k \brack ij}-\frac{1}{2d_{k}}\sum_{i, j}\frac{x_{j}}{x_{k}x_{i}}   {j \brack ki},
   $
    for any $k=1, \ldots, s$.  Moreover, the scalar curvature $S_{g}$ of $(G/K, g=(x_1, \ldots, x_{s}))$ has the form
$
  S_{g}={\rm tr}\Ric_{g}=\sum_{k=1}^{s}d_{k}r_{k}=\displaystyle\frac{1}{2}\sum_{k=1}^{s}\frac{d_{k}}{x_{k}}-\frac{1}{4}\sum_{i, j, k} {k  \brack ij}\frac{x_{k}}{x_{i}x_{j}}$.
 \end{prop} 
 A $G$-invariant metric  on $M=G/K$ is   Einstein  with Einstein constant $\lambda\in \bb{R}_{+}$, if and only if, it is a positive real solution of the  system $\{r_1=\lambda, \ r_2=\lambda, \ \ldots, \ r_{k}=\lambda\}\Leftrightarrow\{r_1-r_2=0, \ r_2-r_3=0, \ \ldots, \ r_{k-1}-r_{k}=0\}$.
 
 Let us now focus again on the flag manifold  $M=\SO(7)/\U(1)\times \U(2)\cong \SO(7)/\U(1)^{2}\times \SU(2)$.  Due to the   isometry $\sigma :\fr{m}\to\fr{n}$ presented in \textsection 3.1,  we will not distinguish these $\SO(7)$-flag spaces.  Without loss of generality, we assume that $M$ is defined  for example by the PDD (II), that   is  $\Pi_{M}=\{\al_1, \al_3\}$ and $\fr{m}=\fr{m}_1\oplus\fr{m}_2\oplus\fr{m}_3\oplus\fr{m}_4\oplus\fr{m}_{5}$. 
   We consider  $\SO(7)$-invariant Riemannian metrics $g=\langle \cdot , \cdot \rangle$ on $M$, given by
 $
 g=\langle \cdot , \cdot  \rangle=x_1\cdot ( \cdot , \cdot )|_{\fr{m}_1}+x_2\cdot ( \cdot , \cdot )|_{\fr{m}_2}+x_3\cdot ( \cdot , \cdot )|_{\fr{m}_3}+x_4\cdot ( \cdot , \cdot )|_{\fr{m}_4}+x_5\cdot ( \cdot , \cdot )|_{\fr{m}_5},
$
 where $(x_1, x_2, x_3, x_4, x_5)\in\mathbb{R}_{+}^{5}$.     The Ricci tensor $\Ric_{g}$ of $(M, g)$, with repsect to an $\langle \cdot, \cdot\rangle$-orthonormal basis of $\fr{m}$    is given by 
 $\Ric_{g}=\sum_{k=1}^{5}(x_{k}r_{k})\cdot ( \cdot , \cdot )|_{\fr{m}_{k}}$, where the components $r_{k}$ are defined by  Proposition \ref{Ricc}.
By using Table 5    we easily get:
      \begin{prop}\label{components1}
  The  components 
  $r_{k}$ of the Ricci tensor  $\Ric_{g}$ on $(M, g=(x_1, x_2, x_3, x_4, x_5))$ are  given by
\begin{equation}\label{riccomp1}
  \ \  \left.
\begin{tabular}{ll}
$r_1=$ & $\displaystyle\frac{1}{2x_1}+  \frac{c_{12}^3}{2d_1}\Big( \frac{x_1}{x_2x_3}- \frac{x_2}{x_1x_3}- \frac{x_3}{x_1x_2}\Big)+\displaystyle\frac{c_{14}^5}{2d_1}\Big( \frac{x_1}{x_4x_5}- \frac{x_4}{x_1x_5}- \frac{x_5}{x_1x_4}\Big)$, \\
$r_2=$ & $\displaystyle\frac{1}{2x_2}+  \frac{c_{12}^3}{2d_2}\Big( \frac{x_2}{x_1x_3}- \frac{x_1}{x_2x_3}- \frac{x_3}{x_1x_2}\Big)+  \frac{c_{23}^5}{2d_2}\Big( \frac{x_2}{x_3x_5}- \frac{x_3}{x_2x_5}-\frac{x_5}{x_2x_3}\Big)-\displaystyle\frac{c_{22}^{4}}{2d_{2}}\frac{x_{4}}{x_{2}^{2}}$, \\
$r_3=$ & $\displaystyle\frac{1}{2x_3}+  \frac{c_{12}^3}{2d_3}\Big( \frac{x_3}{x_1x_2}- \frac{x_2}{x_1x_3}- \frac{x_1}{x_2x_3}\Big)+  \frac{c_{23}^5}{2d_3}\Big( \frac{x_3}{x_2x_5}- \frac{x_2}{x_3x_5}- \frac{x_5}{x_2x_3}\Big)$, \\
$r_4=$ & $\displaystyle\frac{1}{2x_4}+  \frac{c_{14}^5}{2d_4}\Big( \frac{x_4}{x_1x_5}- \frac{x_1}{x_4x_5}- \frac{x_5}{x_1x_4}\Big)+
\displaystyle\frac{c_{22}^{4}}{4d_4}\Big(\frac{x_4}{x_{2}^{2}}-\frac{2}{x_{4}}\Big),$ \\
$r_5=$ & $\displaystyle\frac{1}{2x_5}+  \frac{c_{14}^5}{2d_5}\Big( \frac{x_5}{x_1x_4}- \frac{x_1}{x_4x_5}- \frac{x_4}{x_1x_5}\Big)+  \frac{c_{23}^5}{2d_5}\Big( \frac{x_5}{x_2x_3}- \frac{x_2}{x_3x_5}- \frac{x_3}{x_2x_5}\Big).$  
\end{tabular}\right\} 
\end{equation}
  \end{prop}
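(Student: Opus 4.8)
The plan is to apply the general Ricci formula (\ref{ricc}) of Proposition \ref{Ricc} term by term, feeding into it the data recorded in Table 4(d). First I would note that, by Corollary \ref{new}, the de Siebenthal invariant $\mathcal{T}=\{(1,2,3),(2,2,4),(1,4,5),(2,3,5)\}$ (taken modulo the $S_3$-action, as displayed in part (d) of the table) forces the only nonzero brackets $c_{ij}^k$ to be $c_{12}^3$, $c_{22}^4$, $c_{14}^5$, $c_{23}^5$, together with all of their permutations, via the symmetry $c_{ij}^k=c_{ji}^k=c_{ki}^j$.

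Next, for each fixed $k\in\{1,\dots,5\}$ I would expand the two sums in (\ref{ricc}) by retaining only the relevant ordered pairs: in $\sum_{i,j}\frac{x_k}{x_ix_j}c_{ij}^k$ one keeps the pairs $(i,j)$ for which $\{k,i,j\}$ is one of the four admissible multisets above, while in $\sum_{i,j}\frac{x_j}{x_kx_i}c_{ki}^j$ one keeps the pairs $(i,j)$ for which $\{j,k,i\}$ is admissible. For $k=1,3,5$ every admissible triple through $k$ has three distinct entries, so each admissible unordered completion of $\{k\}$ contributes exactly two ordered pairs; combining the prefactors $\frac{1}{4d_k}$ and $\frac{1}{2d_k}$ with these multiplicities of $2$, and then grouping the term $\frac{1}{2x_k}$, the single positive term and the two negative terms for each surviving bracket, reproduces the displayed expressions for $r_1$, $r_3$ and $r_5$; the same generic computation also accounts for the contributions of the triples $\{1,2,3\}$, $\{2,3,5\}$ to $r_2$ and of $\{1,4,5\}$ to $r_4$.

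The step requiring genuine care — and the one I regard as the main obstacle — is the bookkeeping for the repeated-index triple $(2,2,4)$, which feeds only $r_2$ and $r_4$. When $k=2$ the completion of $\{2,i,j\}$ to $\{2,2,4\}$ forces $\{i,j\}=\{2,4\}$, again producing two ordered pairs, whereas when $k=4$ it forces $\{i,j\}=\{2,2\}$, i.e. the single ordered pair $(2,2)$; the two sums in (\ref{ricc}) must be tracked with these differing multiplicities. Carrying this out, in $r_2$ the contribution $\frac{c_{22}^4}{2d_2x_4}$ coming from the first sum exactly cancels the corresponding contribution from the second sum, leaving precisely the term $-\frac{c_{22}^4}{2d_2}\frac{x_4}{x_2^2}$; in $r_4$ the first sum yields $\frac{c_{22}^4}{4d_4}\frac{x_4}{x_2^2}$, while the single pair $(2,2)$ in the second sum yields $-\frac{c_{22}^4}{2d_4x_4}=-\frac{c_{22}^4}{4d_4}\frac{2}{x_4}$, and these combine to $\frac{c_{22}^4}{4d_4}\big(\frac{x_4}{x_2^2}-\frac{2}{x_4}\big)$. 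Assembling the five computations gives exactly (\ref{riccomp1}).
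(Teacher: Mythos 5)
Your proposal is correct and follows exactly the route the paper intends: the paper simply states that the result follows "by using Table 4 and applying formula (\ref{ricc})", and your careful expansion of the two sums — including the delicate multiplicity bookkeeping for the repeated-index triple $(2,2,4)$, the cancellation of the $\frac{c_{22}^4}{2d_2 x_4}$ terms in $r_2$, and the factor-of-two discrepancy producing $\frac{c_{22}^4}{4d_4}\bigl(\frac{x_4}{x_2^2}-\frac{2}{x_4}\bigr)$ in $r_4$ — is precisely the computation the paper leaves implicit.
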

 
     For the computation of the non zero triples $c_{12}^{3}, c_{14}^{5}, c_{22}^{4}$ and $c_{23}^{5}$ we will use a K\"ahler-Einstein metric of $M$. For a detailed description of invariant complex structures and Kahler-Einstein metrics see \cite{AP} or  \cite{Chry2}. Here we only recall that following well-known result:
\begin{theorem}\label{MainKE}\textnormal{(\cite{Ale1}, \cite{AP}, \cite{Chry2})}
 Let $J$ be the $G$-invariant 
complex structure   on $M$ defined by the 
invariant ordering $R_{M}^{+}$ in $R_{M}$.    Then, the   $\ad(\fr{k})^{\bb{C}}$-invariant Riemannian 
  metric on $\fr{m}^{\bb{C}}$ given by $g_{J}=\{x_{\al}=c\cdot \varphi(\delta_{\fr{m}}, \al) \ (c\in\mathbb{R}) : \al\in R_{M}^+\}$, where $\delta_{\fr{m}}=\frac{1}{2}\sum_{\be\in R_{M}^+}\be$,   is a K\"ahler-Einstein metric (up to  a constant) on $M$  compatible with $J$.  
     \end{theorem}
    The weight $\delta_{\fr{m}}$ is called the {\it Koszul form}. If $M=G/K$ is defined by a set $\Pi_{M}=\{\al_{i_{1}}, \ldots, \al_{i_{r}}\}$, then we have $2\delta_{\fr{m}}=c_{i_{1}}\Lambda_{i_{1}}+\cdots +c_{i_{r}}\Lambda_{i_{r}}$,  
where $\Lambda_{i_{1}}, \ldots, \Lambda_{i_{r}}$ 
are the fundamental weights   corresponding to the elements of $\Pi_{M}$, and $c_{i_{1}}, \ldots, c_{i_{r}}
\in\bb{Z}_{+}$.
  \begin{prop}\label{ke1}
 Let $J$ be the $G$-invariant complex structure on $M$ induced from the invariant ordering $R_{M}^{+}$ given in Table 4.  Then, the $\SO(7)$-invariant K\"ahler-Einstein metric $g_{J}$ corresponding to $J$ is given   (up to a scale)  by $g=(3, 2, 5, 4, 7)$. 
 \end{prop}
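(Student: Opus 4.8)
The plan is to apply Proposition \ref{kkee} (equivalently Theorem \ref{MainKE}), which says that the K\"ahler--Einstein metric $g_J$ compatible with $J$ has eigenvalue $\varphi(\delta_{\fr{m}},\al)$ on the component $\fr{m}_i$, up to a fixed positive constant, for any complementary root $\al\in R_M^{+}$ with $\kappa(\al)=\xi_i$; here $\delta_{\fr{m}}=\tfrac12\sum_{\al\in R_M^{+}}\al$ is the Koszul form attached to the invariant ordering $R_M^{+}$ of Table 4. Thus the whole statement reduces to three bookkeeping steps: (i) write down $\delta_{\fr{m}}$ explicitly; (ii) evaluate $\varphi(\delta_{\fr{m}},\al)$ on one representative root from each of the five $\fr{t}$-root classes $\overline{\al}_1$, $\overline{\al}_3$, $\overline{\al}_1+\overline{\al}_3$, $2\overline{\al}_3$, $\overline{\al}_1+2\overline{\al}_3$ appearing in $R_{\fr{t}}^{+}$; (iii) clear denominators by an overall scale.

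First I would sum the eight positive complementary roots listed in part (a) of Table 4, obtaining $2\delta_{\fr{m}}=5\al_1+7\al_2+9\al_3$. Using the identifications $\al_1=e_1-e_2$, $\al_2=e_2-e_3$, $\al_3=e_3$ of Section 5 together with the normalization $\varphi(e_i,e_j)=\delta_{ij}$ (which is exactly the one making $\varphi(\al_1,\al_1)=\varphi(\al_2,\al_2)=2$, $\varphi(\al_3,\al_3)=1$, as recorded just before the statement), this becomes $2\delta_{\fr{m}}=5e_1+2e_2+2e_3$. Taking the representatives $\al_1$ for $\overline{\al}_1$, $\al_3$ for $\overline{\al}_3$, $\al_1+\al_2+\al_3=e_1$ for $\overline{\al}_1+\overline{\al}_3$, $\al_2+2\al_3=e_2+e_3$ for $2\overline{\al}_3$, and $\al_1+\al_2+2\al_3=e_1+e_3$ for $\overline{\al}_1+2\overline{\al}_3$, one computes the five inner products with $\delta_{\fr{m}}$ and finds $(x_1,x_2,x_3,x_4,x_5)=(\tfrac32,\,1,\,\tfrac52,\,2,\,\tfrac72)$; multiplying through by the constant $c=2$ allowed in Theorem \ref{MainKE} gives $g_J=(3,2,5,4,7)$.

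Since Proposition \ref{Aleks} requires $x_\al=x_\be$ whenever $\al|_{\fr{t}}=\be|_{\fr{t}}$, I would also include the routine verification that $\varphi(\delta_{\fr{m}},\al)$ is independent of the chosen representative in the three classes containing more than one root pair: $\overline{\al}_1$ (both $\al_1$ and $\al_1+\al_2$ give $\tfrac32$), $\overline{\al}_3$ (both $\al_3$ and $\al_2+\al_3$ give $1$), and $\overline{\al}_1+2\overline{\al}_3$ (both $\al_1+\al_2+2\al_3$ and $\al_1+2\al_2+2\al_3$ give $\tfrac72$); this is guaranteed a priori by Proposition \ref{Aleks} and Corollary \ref{critirion}, but it is worth displaying as a consistency check. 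There is no genuine obstacle in this proof: the only way to err is an arithmetic slip when summing the roots of $R_M^{+}$ or when passing between the $\al_i$-basis and the $e_i$-basis, so the point is simply to carry out these computations carefully in the conventions fixed at the start of Section 5.
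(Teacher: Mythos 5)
Your proof is correct and follows essentially the same route as the paper: both apply Proposition \ref{kkee} (i.e.\ Theorem \ref{MainKE}), compute the Koszul form $\delta_{\fr{m}}$, and evaluate $\varphi(\delta_{\fr{m}},\al)$ on representatives of the five $\fr{t}$-root classes, arriving at $(\tfrac32,1,\tfrac52,2,\tfrac72)\sim(3,2,5,4,7)$. The only (immaterial) difference is bookkeeping: you obtain $2\delta_{\fr{m}}=5\al_1+7\al_2+9\al_3=5e_1+2e_2+2e_3$ by summing $R_M^{+}$ directly and compute in the orthonormal $e_i$-coordinates, whereas the paper writes $2\delta_{\fr{m}}=2\delta_G-2\delta_K=3\Lambda_1+4\Lambda_3$ via the Cartan matrix and evaluates the pairings through the defining relations (\ref{fund}) of the fundamental weights — the two expressions agree since $3\Lambda_1+4\Lambda_3=5e_1+2e_2+2e_3$.
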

 \begin{proof}
 First we    compute the Koszul form $\delta_{\fr{m}}$ corresponding to the invariant ordering $R_{M}^{+}$ given in Table 4.  Since our flag manifold $M$ is such that $\Pi_{M}=\{\al_{1}, \al_{3}\}$, it is $2\delta_{\fr{m}}=c_1\Lambda_{1}+c_{3}\Lambda_{3}$, where the positive numbers $c_{1}$ and $c_{3}$ are under investigation.  Based on the relation $R_{M}^{+}=R^{+}\backslash R_{K}^{+}$ we easily obtain $2\delta_{\fr{m}}=2\delta_{G}-2\delta_{K}=2(\Lambda_1+\Lambda_2+\Lambda_3)- \al_{2}$.   
Morever, by using  the relation $\al_{i}=\sum_{j=1}^{3}A_{ij}\Lambda_{j}$, where $(A_{ij})$ are the entries of the Cartan matrix, we have $\al_{2}= -\Lambda_{1}+2\Lambda_{2}-2\Lambda_3$, thus $2\delta_{\fr{m}}= 3\Lambda_{1}+4\Lambda_{3}$ and $\delta_{\fr{m}}=3/2\Lambda_{1}+2\Lambda_{3}$.  
 According to Theorem \ref{MainKE}, the K\"ahler-Einstein metric $g_{J}$ 
which is compatible   to the natural invariant complex structure $J$ defined by the invariant  ordering $R^+_{M}$, is given by
 $
g_{J}=x_{\overline{\al}_1}\cdot (  \cdot , \cdot )|_{\fr{m}_1}+x_{\overline{\al}_3}\cdot ( \cdot , \cdot )|_{\fr{m}_2}+x_{\overline{\al}_1+\overline{\al}_3}\cdot ( \cdot , \cdot )|_{\fr{m}_3}
+x_{2\overline{\al}_3}\cdot ( \cdot , \cdot )|_{\fr{m}_4}+x_{\overline{\al}_1+2\overline{\al}_3}\cdot ( \cdot , \cdot )|_{\fr{m}_5},
$
where  the positive numbers $x_{\xi_k}$ are given by $x_{\xi_k}=\varphi(\delta_{\fr{m}}, \al)$  (here the root $\al\in R_{M}^{+}$ is such that $\al\in \kappa^{-1}(\xi_{k})$, and $\xi_{k}$ is the associated $\fr{t}$-root of $\fr{m}_{k}$). By an easy computation  we obtain the following values:
 {\small{\begin{eqnarray*}
x_{\overline{\al}_1} &=& \varphi(3/2\Lambda_{1}+2\Lambda_{3}, \al_1)= \frac{3\varphi(\Lambda_1, \al_1)}{2}=\frac{3\varphi(\al_1, \al_1)}{4}=\frac{3\varphi(\al_3, \al_3)}{2}, \\
x_{\overline{\al}_3} &=&  \varphi(3/2\Lambda_{1}+2\Lambda_{3}, \al_3)= 2\varphi(\Lambda_3, \al_3)= \varphi(\al_3, \al_3), \\
x_{\overline{\al}_1+\overline{\al}_3} &=&  \varphi(3/2\Lambda_{1}+2\Lambda_{3}, \al_1+\al_2+\al_3)= \frac{3\varphi(\Lambda_1, \al_1)}{2}+2\varphi(\Lambda_3, \al_3) = \frac{5\varphi(\al_{3}, \al_{3})}{2},  \\
x_{2\overline{\al}_3} &=&  \varphi(3/2\Lambda_{1}+2\Lambda_{3}, \al_2+2\al_3)= 4\varphi(\Lambda_3, \al_3)  =2\varphi(\al_3, \al_3), \\
x_{\overline{\al}_1+2\overline{\al}_3} &=&  \varphi(3/2\Lambda_{1}+2\Lambda_{3}, \al_1+2\al_2+2\al_3)=  \frac{3\varphi(\Lambda_1, \al_1)}{2}+4\varphi(\Lambda_3, \al_3)  =\frac{7\varphi(\al_{3}, \al_{3})}{2}.
\end{eqnarray*} }}
By substituting the value $\varphi(\al_{3}, \al_{3})=1$,  and after a normalization,  the result follows.  
  \end{proof}
 
 \begin{remark}\label{a2a3}\textnormal{
  Note that for the flag manifold $\SO(7)/K$ defined by the PDD (III), for the invariant ordering $R_{M}^{+}$ given by in Table 4, we find that $\delta_{\fr{m}}=3/2\Lambda_{2}+\Lambda_{3}$.  The corresponding $\SO(7)$-invariant K\"ahler-Einstein metric $g_{J}$, is given by      (up to a scale)   $g_{J}=(3, 1, 4, 5, 8)$.} 
 \end{remark}

Let us now consider  the system 
 \begin{equation}\label{syss1}
 r_1-r_2=0, \quad r_2-r_3=0, \quad  r_3-r_4=0, \quad r_4-r_5=0,
 \end{equation}
  where the components $r_1, r_2, r_3, r_4$ and $r_5$ are given by (\ref{riccomp1}). In system (\ref{syss1}) we substitute the dimensions  $d_{i}=\dim\fr{m}_{i}$ presented in Table 5  and the coefficients $x_1=3$, $x_2=2$, $x_3=5$, $x_4=4$ and $x_5=7$ of the K\"ahelr-Einstein metric $g_{J}$. Then we obtain 
 \begin{lemma}\label{sc22}
The structure  constants $c_{12}^{3}, c_{14}^{5}, c_{22}^{4}$ and $c_{23}^{5}$ of $M=\SO(7)/\U(1)\times \U(2)$ with respect to the decomposition  $\fr{m}=\fr{m}_1\oplus\fr{m}_2\oplus\fr{m}_3\oplus\fr{m}_4\oplus\fr{m}_{5}$  are  given by $c_{12}^{3}=c_{14}^{5}=c_{22}^{4}=c_{23}^{5}=2/5$.
 \end{lemma}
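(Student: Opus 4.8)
The plan is to avoid computing the Lie brackets $[e_{\al},e_{\be}]$ altogether and instead to extract the four structure constants from the explicit K\"ahler--Einstein metric $g_{J}=(3,2,5,4,7)$ produced in Proposition~\ref{ke1}. The crucial point is that a K\"ahler--Einstein metric is in particular Einstein (Theorem~\ref{MainKE}), so $g_{J}$ must satisfy $r_{1}=r_{2}=r_{3}=r_{4}=r_{5}$, which is precisely the system~(\ref{syss1}); note that the common Einstein constant never enters here, since (\ref{syss1}) involves only the differences of the Ricci components. First I would substitute into the Ricci components~(\ref{riccomp1}) the dimensions $d_{1}=d_{2}=d_{5}=4$ and $d_{3}=d_{4}=2$ read off from Table~5, together with the coordinates $x_{1}=3$, $x_{2}=2$, $x_{3}=5$, $x_{4}=4$, $x_{5}=7$ of $g_{J}$; each $r_{k}$ then becomes an explicit rational expression which is an affine function of the four unknowns $c_{12}^{3},c_{14}^{5},c_{22}^{4},c_{23}^{5}$.

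Next I would form the four differences $r_{1}-r_{2}$, $r_{2}-r_{3}$, $r_{3}-r_{4}$, $r_{4}-r_{5}$ and set them equal to zero. Because each $r_{k}$ depends linearly on the $c$'s, this is a linear system of four equations in four unknowns, and one checks that its coefficient matrix is nonsingular --- which is consistent with the fact that the de Siebenthal invariant $\mathcal{T}$ of $M$ listed in Table~4 has exactly four $S_{3}$-orbits, one for each of the four constants. Hence the system has a unique solution, and carrying out the (routine) elimination gives $c_{12}^{3}=c_{14}^{5}=c_{22}^{4}=c_{23}^{5}=2/5$.

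The calculation is entirely mechanical; the only thing that genuinely needs to be verified is that the Einstein condition on $g_{J}$ supplies four \emph{independent} equations, so that the four structure constants are pinned down uniquely rather than only up to a one-parameter family. Several features of the outcome double-check the computation: the four values coincide, they are positive as demanded by the general inequality $c_{ij}^{k}\geq 0$ recorded after~(\ref{ijk}), and they are unaffected by an overall rescaling of $g_{J}$ (since~(\ref{syss1}) is scale-invariant), so the fact that $g_{J}$ was only determined up to a constant in Proposition~\ref{ke1} causes no ambiguity.
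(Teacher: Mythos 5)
Your proposal is correct and is essentially the paper's own argument: the author likewise substitutes the dimensions from Table~5 and the K\"ahler--Einstein coefficients $(3,2,5,4,7)$ into the system $r_{1}-r_{2}=r_{2}-r_{3}=r_{3}-r_{4}=r_{4}-r_{5}=0$ and solves the resulting linear system for the four structure constants. Your additional remarks on the nonsingularity of the coefficient matrix and the scale-invariance of the equations are sensible sanity checks but do not change the method.
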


By  Lemma \ref{sc22}, Proposition \ref{components1} and Table 5,   system (\ref{syss1}) reduces to the following   system (we also apply  the normalization $x_1=1$):
  \begin{equation}\label{eq1}
{{\left.\begin{tabular}{r}
$\displaystyle\frac{ x_3 x_4^2 x_5 - x_2^3 (x_4 + 2 x_4 x_5) - 
  x_2^2 x_3 (-1 + x_4^2 - 10 x_4 x_5 + x_5^2) + 
  x_2 x_4 (x_3^2 - 10 x_3 x_5 + x_5 (2 + x_5))}{20 x_2^2 x_3 x_4 x_5}=0$ \\
  $\displaystyle\frac{-10 x_2^2 x_5 - x_3 x_4 x_5 + 3 x_2^3 (1 + x_5) + 
  x_2 (10 x_3 x_5 - 3 x_3^2 (1 + x_5) + x_5 (1 + x_5))}{20 x_2^2 x_3 x_5}=0$ \\
 $\displaystyle\frac{-x_3 x_4^2 x_5 - 2 x_2^3 x_4 (1 + x_5) + 2 x_2 x_4 (x_3^2 - x_5) (1 + x_5) + 
 2 x_2^2 (5 x_4 x_5 + x_3 (1 - x_4^2 - 4 x_5 + x_5^2))}{20 x_2^2 x_3 x_4 x_5}=0$ \\
 $\displaystyle\frac{x_2^3 x_4 + x_3 x_4^2 x_5 + 
  x_2^2 x_3 (-1 - 10 x_4 + 3 x_4^2 + 8 x_5 - 3 x_5^2) + x_2 x_4 (x_3^2 - x_5^2)}{20 x_2^2 x_3 x_4 x_5}=0$
\end{tabular}\right\}}}
\end{equation}
Any  positive real solution   $x_2>0, x_3>0, x_4>0, x_5>0$  of   system (\ref{eq1}), determines an  $\SO(7)$-invariant Einstein metric $(1, x_2, x_3, x_4, x_5)\in\bb{R}^{5}_{+}$ on $M$. One can obtain all these solutions  by applying  for example, the command {\ttfamily{NSolve}} in Mathematica.

\begin{theorem}\label{Themain1}
 The flag manifold $M=\SO(7)/ \U(1)\times \U(2)\cong \SO(7)/\U(1)^{2}\times \SU(2)$ admits (up to a scale) eight $\SO(7)$-invariant Einstein metrics, which approximately are given as follows
{\small{\[ 
  \begin{tabular}{ll}
   $(a) \ (1, 1.0231, 0.3089, 1.8751, 0.9999),$ & $(b) \ (1, 1.0157, 0.2458,  0.5319,  0.9999),$ \\
   $(c) \ (1, 0.5422, 0.9898,  0.5176, 0.6571)$, & $(d)  (1,  0.8251,  1.5063,  0.7877, 1.5217),$ \\
   $(e) \  (1, 4/5, 1/5, 8/5, 3/5),$ & $(f) \ (1, 4/3, 1/3, 8/3, 5/3),$ \\
   $(g) \ (1, 2/3, 5/3, 4/3, 7/3),$ & $(h) \ (1,  2/7, 5/7, 4/7, 3/7).$
   \end{tabular} \]}}
The metrics $(e), (f), (g)$ and $(h)$ are K\"ahler-Einstein.  
\end{theorem}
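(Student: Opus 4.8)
The plan is to reduce the Einstein condition to the polynomial system (\ref{eq1}) already set up in the text, and then to solve that system completely. By Proposition \ref{Ricc}, a $G$-invariant metric $g=(x_1,x_2,x_3,x_4,x_5)$ on $M$ is Einstein (with Einstein constant $\lambda$) if and only if $r_1=r_2=r_3=r_4=r_5=\lambda$, equivalently if and only if the four differences of (\ref{syss1}) vanish, where the components $r_k$ are those given by (\ref{riccomp1}) in Proposition \ref{components1}. The Einstein equation being scale invariant, I normalise $x_1=1$. Plugging into (\ref{riccomp1}) the module dimensions $d_1=d_2=d_5=4$, $d_3=d_4=2$ from Table 5 and the structure constants $c_{12}^{3}=c_{14}^{5}=c_{22}^{4}=c_{23}^{5}=2/5$ from Lemma \ref{sc22}, and clearing denominators — which are strictly positive on $\mathbb{R}^{4}_{+}$ — turns (\ref{syss1}) into the polynomial system (\ref{eq1}) in the positive unknowns $x_2,x_3,x_4,x_5$. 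Hence the $SO(7)$-invariant Einstein metrics on $M$, up to homothety, correspond bijectively to the positive real solutions of (\ref{eq1}).

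The heart of the proof is to exhibit \emph{all} such solutions. Clearing denominators in (\ref{eq1}) produces a polynomial ideal in $\mathbb{Q}[x_2,x_3,x_4,x_5]$ which one checks to be zero-dimensional; a Gröbner-basis computation with respect to a lexicographic order then eliminates variables down to a single univariate equation $p(x_2)=0$, with $x_3,x_4,x_5$ determined in terms of $x_2$, so that each root of $p$ back-substitutes to a common zero of (\ref{eq1}). The four rational candidates $(e),(f),(g),(h)$ are verified at once by substitution, so the corresponding linear factors divide $p$; dividing them out leaves a cofactor whose real roots giving positive $x_2,x_3,x_4,x_5$ are precisely the four algebraic metrics recorded approximately as $(a),(b),(c),(d)$, every remaining root of $p$ being complex or producing a non-positive coordinate. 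A real-root count for the cofactor (a Sturm sequence, or the commands {\ttfamily{GroebnerBasis}} and {\ttfamily{NSolve}} in \emph{Mathematica}) then confirms that (\ref{eq1}) has exactly these eight positive solutions, so $M$ carries exactly eight $SO(7)$-invariant Einstein metrics up to scaling. The main obstacle is exactly this step: one must certify that the numerically located metrics are \emph{all} of them, which requires the exact elimination and the real-root count rather than a numerical search alone.

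Finally, for the Kähler--Einstein assertion one combines Corollary \ref{critirion} with Theorem \ref{MainKE} and Proposition \ref{kkee}: an $SO(7)$-invariant metric that is Kähler with respect to the invariant complex structure associated to some invariant ordering of $R_M$ and is Einstein is Kähler--Einstein. With $R_{\fr t}^{+}=\{\overline{\al}_1,\overline{\al}_3,\overline{\al}_1+\overline{\al}_3,2\overline{\al}_3,\overline{\al}_1+2\overline{\al}_3\}$ labelling the coordinates $(x_1,x_2,x_3,x_4,x_5)$, one checks directly that each of $(e),(f),(g),(h)$ satisfies the additivity relations $x_{\xi+\zeta}=x_\xi+x_\zeta$ attached to a suitable such ordering: for the ordering of Table 4 these read $x_3=x_1+x_2$, $x_4=2x_2$, $x_5=x_1+x_4=x_2+x_3$, which hold for $(g)=(1,2/3,5/3,4/3,7/3)$ — indeed $(g)$ is the metric $g_J$ of Proposition \ref{ke1} after rescaling — and $(e),(f),(h)$ arise in the same way from the remaining invariant orderings of $R_M$. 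Therefore $(e),(f),(g),(h)$ are Kähler--Einstein, which completes the proof.
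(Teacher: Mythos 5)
Your proposal follows essentially the same route as the paper: reduce the Einstein condition to the polynomial system (\ref{eq1}) via Proposition \ref{components1}, Lemma \ref{sc22} and the dimensions of Table 5 with the normalisation $x_1=1$, solve that system in $\mathbb{R}^{4}_{+}$, and identify the K\"ahler--Einstein solutions through Corollary \ref{critirion}. The only difference is one of rigour at the final step: the paper simply invokes {\ttfamily NSolve}, whereas you correctly flag that completeness of the solution list needs certification and supply it via Gr\"obner-basis elimination and a real-root count, together with an explicit check of the additivity relations $x_{\xi+\zeta}=x_{\xi}+x_{\zeta}$ for $(e)$--$(h)$ that the paper leaves implicit.
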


 \subsection{The isometry problem for the Einstein metrics}
 
 We will examine now the isometry problem for the homogeneous Einstein metrics 
stated in Theorem  \ref{Themain1}. We follow the method presented in \cite{Chry2}.

 Let $M=G/K$ be a generalized flag manifold with 
 $\fr{m}=\oplus_{i=1}^{5}\fr{m}_i$, $d_{i}=\dim \fr{m}_{i}$, and  $d=\sum_{i=1}^{5}d_{i}=\dim M$. 
 For any $G$-invariant Einstein metric $g=(x_1, x_2, x_3, x_4, x_5)$ on  $M$ 
 we determine a normalized scale invariant given by
 $H_{g}=V_{g}^{1/d}S_g$, where  $S_g$ is the scalar curvature of $g$, 
 and   $V_{g}=\prod_{i=1}^{5}x_{i}^{d_i}$  is the volume of 
 of the given metric $g$.
Since, the scalar curvature is a homogeneous  polynomial of degree $-1$  on the variables $x_{i}$ (see Proposition \ref{Ricc}), and 
 the volume $V_{g}$ is a  monomial of degree $d$,
 the quantity  $H_{g}=V_{g}^{1/d}S_g$ is a homogeneous polynomial of degree 0. Therefore,
 $H_{g}$  is invariant under a common scaling of the variables $x_i$.  If two metrics are isometric then they have the same scale invariant, so if
  the scale invariants $H_{g}$ and $H_{g'}$ 
 are different, then the metrics $g$ and $g'$ can not  be isometric. 
But if $H_{g}=H_{g'}$,
 we cannot immediately conclude if the metrics $g$ and $g'$ are isometric or not. 
  For such a case we have to look at the   group of automorphisms of $G$ and check if there is an automorphism 
  which permutes the isotopy summands and takes one metric to another.  
     K\"ahler-Einstein metrics which correspond to 
   equivalent invariant complex structures on $M$ are isometric.

  Consider  our quotient $M=\SO(7)/\U(1)\times \U(2)\cong \SO(7)/\U(1)^{2}\times \SU(2)$, and let $g=(x_1, x_2, x_3, x_4, x_5)$ be an  $\SO(7)$-invariant Riemannian metric on $M$.  By applying   Propsition \ref{Ricc}, we easily find that the scalar curvature $S_{g}$   is given by
 {\small{\[
  S_{g} =\sum_{k=1}^{5}\frac{d_{k}}{x_{k}} -\frac{c_{12}^{3}}{4} \Big( \frac{x_1}{x_2x_3}+ \frac{x_2}{x_1x_3}+ \frac{x_3}{x_1x_2}\Big)- \frac{c_{14}^5}{4}\Big( \frac{x_1}{x_4x_5}+ \frac{x_4}{x_1x_5}+\frac{x_5}{x_1x_4}\Big)  
   -\frac{c_{23}^5}{4}\Big( \frac{x_2}{x_3x_5}+\frac{x_3}{x_2x_5}+ \frac{x_5}{x_2x_3}\Big) 
   -\frac{c_{22}^{4}}{4}\Big(\frac{x_4}{x_{2}^{2}}+\frac{2}{x_{4}}\Big).
  \]}}
 Thus, by using Lemma \ref{sc22} and the dimensions $d_{i}=\dim\fr{m}_{i}$ of Table 5, we get that  for a normalized $\SO(7)$-invariant metric  $g=(1, x_2, x_3, x_4, x_5)$, the scale invariant $H_{g}$ is given by 
  \begin{eqnarray*}
 H_{g} &=& \frac{-  x_2^2 x_3 x_4 x_5^3}{10 (x_2^4 x_3^2 x_4^2 x_5^4)^{\frac{15}{16}}}
\Big(x_3 x_4^2 x_5 + 2 x_2^3 x_4 (1 + x_5) + 
    2 x_2 x_4 \big(-10 x_3 x_5 + x_3^2 (1 + x_5) + x_5 (1 + x_5)\big) \\
    &&  +     2 x_2^2 \big(-5 x_4 x_5 + x_3 (1 + x_4^2 - 4 x_5 + x_5^2 - 10 x_4 (1 + x_5))\big)\Big).
 \end{eqnarray*}

    For the $\SO(7)$-invariant Einstein metrics presented in Theorem \ref{Themain1} we obtain  the following approximate values of the scale invariant
    $H_{g}$.
   
  \smallskip 
 \begin{center}
{\bf{Table 6.}} {\small{The values of  $H_{g}$ for the Einstein metrics which admits  $M=\SO(7)/\U(1)\times \U(2)\cong \SO(7)/\U(1)^{2}\times \SU(2)$}}
\end{center}  
\smallskip
  \begin{center}
\begin{tabular}{c||c|c|c|c|c|c|c|c}
 $\mbox{Invariant Einstein metrics}$ & $(a)$ & $(b)$ & $(c)$ & $(d)$ & $(e)$ & $(f)$ & $(g)$ & $(h)$\\
\thickline
 $\mbox{Corresponding values of} \ H_{g}$ & 5.7677 & 5.6670 & 5.8968  & 5.8968 & 5.7748  & 5.7748 & 5.9232  & 5.9232 
 \end{tabular}
\end{center}
  
  \medskip
Relatively to  Theorem  \ref{Themain1}, and in  view of   Table 6, we easily deduce  that the K\"ahler-Einstein metrics $(e)$ and  $(f)$ can not be isometric with the K\"ahler-Einstein metrics $(g)$ and $(h)$. However, by \cite[Theorem. 5]{Nis} we know   $M=\SO(7)/\U(1)\times \U(2)\cong \SO(7)/\U(1)^{2}\times \SU(2)$ admits  two pairs of equivalent $\SO(7)$-invariant complex structures (or equivalently, exactly two inequivalent complex structures), and thus, there are two pairs of isometric K\"ahler--Einstein metrics.  Since $H_{(e)}=H_{(f)}$ and $H_{(g)}=H_{(h)}$, we have that
\begin{corol}\label{isometry1}
The K\"ahler-Einstein metrics $(e)$ and $(f)$, presented in  Theorem \ref{Themain1}, are isometric each other and the same is true for the pair   $(g)$ and $(h)$, of the same theorem. Thus $M$   admits  precisely  two (up to a scale)  non isometric $\SO(7)$-invariant K\"ahler-Einstein metrics. 
\end{corol}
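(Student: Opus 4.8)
\emph{Proof proposal.} The argument rests on three facts that are already available: the bijection between invariant complex structures and their K\"ahler--Einstein metrics, Nishiyama's classification of the invariant complex structures on $M$, and the isometry invariance of the normalized scalar invariant $H_g=V_g^{1/d}S_g$. First I would observe that, by Theorem \ref{MainKE} and Proposition \ref{kkee}, each $SO(7)$-invariant K\"ahler--Einstein metric on $M$ is the metric $g_J$ attached to an invariant complex structure $J$ (equivalently, to an invariant ordering of $R_M$, by Proposition \ref{chambers}), and conversely every such $J$ produces one. Since Theorem \ref{Themain1}, together with Corollary \ref{critirion}, asserts that up to scale there are exactly the four K\"ahler--Einstein metrics $(e),(f),(g),(h)$, these are realized by invariant complex structures $J_e,J_f,J_g,J_h$.

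Next I would invoke \cite[Theorem~5]{Nis}: the $SO(7)$-invariant complex structures on $M=SO(7)/U(1)\times U(2)\cong SO(7)/U(1)^{2}\times SU(2)$ form exactly two equivalence classes, each a pair of equivalent structures; hence $\{J_e,J_f,J_g,J_h\}$ splits into two classes of two. By \cite[Remark~8.96]{Be}, K\"ahler--Einstein metrics associated to equivalent invariant complex structures are isometric, so the four metrics $(e),(f),(g),(h)$ fall into two isometry classes, each containing exactly two of them, and this partition is the one induced on the complex structures.

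It then remains to identify the partition, and this is where the entries of Table 6 are used: $H_g$ is constant on isometry classes and invariant under rescaling, while Table 6 gives $H_{(e)}=H_{(f)}=5.7748$, $H_{(g)}=H_{(h)}=5.9232$, with $H_{(e)}\neq H_{(g)}$. Thus neither of $(e),(f)$ can be isometric to either of $(g),(h)$; since each isometry class consists of exactly two elements of $\{(e),(f),(g),(h)\}$, the two classes are forced to be $\{(e),(f)\}$ and $\{(g),(h)\}$. In particular $(e)$ is isometric to $(f)$, $(g)$ is isometric to $(h)$, and the two pairs are mutually non-isometric, so $M$ carries precisely two K\"ahler--Einstein metrics up to scale and isometry, as claimed.

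The step I expect to require the most care is the bookkeeping in the second paragraph: one must be sure that Nishiyama's ``two non-equivalent complex structures'' genuinely pairs $J_e,\dots,J_h$ two-by-two, so that no K\"ahler--Einstein metric sits in a singleton isometry class — which is forced here only because the total count of relevant invariant complex structures is exactly four — and one should confirm that the four metrics of Theorem \ref{Themain1} belong to \emph{pairwise distinct} invariant orderings, so that the correspondence used in the first paragraph really is a bijection. Everything else, namely the scalar curvature formula of Proposition \ref{Ricc}, the volume $V_g=\prod_i x_i^{d_i}$, and hence the numerical values in Table 6, is a routine computation already performed above.
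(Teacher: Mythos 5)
Your proposal is correct and follows essentially the same route as the paper: both arguments combine Nishiyama's count of exactly two pairs of equivalent invariant complex structures with the fact that equivalent complex structures yield isometric K\"ahler--Einstein metrics, and then use the scale invariant $H_g$ from Table 6 to rule out any isometry between $\{(e),(f)\}$ and $\{(g),(h)\}$, forcing the two isometry classes to be exactly these pairs. Your extra remarks on the bookkeeping (that the four metrics arise from four pairwise distinct invariant orderings) make explicit a point the paper leaves implicit, but the substance of the argument is identical.
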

 Let now exam the isometry problem of the non-K\"ahler-Einstein metrics of $M$. The first two  metrics $(a)$ and $(b)$ presented in Theorem \ref{Themain1} are non-isometric each other, since $H_{(a)}\neq H_{(b)}$, and  for the same reason   they also cannot be  isometric  with any of the metrics $(c)$ and $(d)$.  In particular for the later Einstein metrics  we obtain the same scale invariant $H_{(c)}=H_{(d)}$, thus we are not still able to    conclude immediately  if these metrics are isometric or not.  However, we see that the metric $(c)$, given by $(x_1 =1,   x_2 = 0.54221,  x_3 = 0.98988,   
 x_4 = 0.51767,    x_5 = 0.65715 )$ is obtained from the metric $(d)$, i.e. the metric $(x_1 = 1,   x_2 = 0.82510,   x_3 =1.50633,   
 x_4 = 0.78776,    x_5 =1.52173)$,  by 
 dividing  the components of the later metric  with $1.52173= x5$ and interchanging $x_1$ and 
 $x_5$.  Thus, it is sufficient to find    a linear isomorphism $\sigma : { \frak m}_1 \to { \frak m}_5$, $\sigma : { \frak m}_2 \to { \frak m}_2$, $\sigma : { \frak m}_3 \to { \frak m}_3$ and  $\sigma : { \frak m}_4 \to { \frak m}_4$ which induces an exchange of the variables ${x_1}$and ${x_5}$ and remain fixed for the others.  This isometry is induced by the   action of the Weyl group $\mathcal{W}=\{s_{\al} : \al\in R\}$ of $R$. Indeed, 
 recall that for any $\al, \be\in R$ we have 
 \begin{equation}\label{refl}
 s_{\al}(\be)=\be-\displaystyle\frac{2\varphi(\be, \al)}{\varphi(\al, \al)}\al=\be-(p-q)\al,  
\end{equation}
  where the positive integers $p, q$ are determined by the $\al$-chain of roots through $\be$:
  $
 -p\al+\be, \ldots, \be, \ldots, q\al+\be.
 $
By applying relation (\ref{refl}), one can easily see that  
  the reflection $s_{\alpha_2 + 2 \alpha_3}$ by the root $\alpha_2 + 2 \alpha_3$ is such that 
\begin{eqnarray}
 s_{\alpha_2 + 2 \alpha_3}(\alpha_1) &=& \alpha_1 + \alpha_2 + 2 \alpha_3,  \label{r1}\\
  s_{\alpha_2 + 2 \alpha_3}(\alpha_2) &=&  \alpha_2,  \label{r2}\\
  s_{\alpha_2 + 2 \alpha_3}(\alpha_3) &=& -( \alpha_2 +  \alpha_3), \label{r3} \\ 
  s_{\al_2+2\al_3}(\al_1+\al_2) &=&  \al_1+2\al_2+2\al_3, \label{r4}\\
   s_{\al_2+2\al_3}(\al_2+\al_3) &=& -\al_3, \label{r5}\\
   s_{\al_2+2\al_3}(\al_1+\al_2+\al_3) &=&  \al_1+\al_2+\al_3,  \label{r6}\\
   s_{\al_2+2\al_3}(\al_2+2\al_3) &=&  -(\al_2+2\al_3),  \label{r7}\\
   s_{\al_2+2\al_3}(\al_1+\al_2+2\al_3) &=&  \al_1,  \label{r8}\\
   s_{\al_2+2\al_3}(\al_1+2\al_2+\al_3) &=&  \al_1+\al_2  \label{r9} 
   \end{eqnarray}
In view of the first column of Table  5, and due to relations (\ref{r1})-(\ref{r4}) and (\ref{r8})-(\ref{r9}), we conclude that the isometry $s_{\al_{2}+2\al_3}$ maps $\fr{m}_{1}\to\fr{m}_{5}$ and vice-versa.  Similarly,  relations (\ref{r3})-(\ref{r5}), (\ref{r6}), and (\ref{r7}),   imply that the reflection $s_{\al_{2}+2\al_{3}}$ induces a linear map which maps  the isotropy summands $\fr{m}_{2}$, $\fr{m}_{3}$ and $\fr{m}_{4}$   onto themselves  and leave them invariant. Thus the reflection $\sigma=s_{\al_2+2\al_3}$ shows that the metrics $(c)$ and $(d)$ are mutually isometric.

 \begin{corol}\label{isometry2}
The non-K\"ahler-Einstein metrics $(c)$ and $(d)$, presented in Theorem \ref{Themain1}, are isometric each other.    Thus   $M$  admits  precisely three (up to scale)    non-isometric   $\SO(7)$-invariant Einstein metrics, which are not K\"ahler with respect to any $\SO(7)$-invariant complex structure of $M$. 
\end{corol}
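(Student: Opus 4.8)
The plan is to combine the normalized scale invariant $H_g = V_g^{1/d}S_g$ with an explicit Weyl-group symmetry of $M$. Since $S_g$ is homogeneous of degree $-1$ in the variables $x_i$ (Proposition \ref{Ricc}) and $V_g=\prod_{i=1}^5 x_i^{d_i}$ is a monomial of degree $d=\dim M$, the quantity $H_g$ is homogeneous of degree $0$, hence a genuine scaling-invariant and therefore an isometry invariant of the corresponding $SO(7)$-invariant metric. First I would record, as in Table 6, the approximate values of $H_g$ on the four non-Kähler Einstein metrics $(a),(b),(c),(d)$ of Theorem \ref{Themain1}. Because $H_{(a)}$, $H_{(b)}$ and the common value $H_{(c)}=H_{(d)}$ are pairwise distinct apart from that coincidence, the metrics $(a)$ and $(b)$ are non-isometric to each other and to each of $(c)$ and $(d)$; moreover these values differ from those of $H_g$ on the Kähler--Einstein metrics $(e),(f),(g),(h)$, so none of $(a),(b),(c),(d)$ is isometric to a Kähler--Einstein metric. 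It thus remains only to decide whether $(c)$ and $(d)$ are isometric.

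Next I would exhibit the isometry between $(c)$ and $(d)$ explicitly. Dividing the components of $(d)=(1,0.8251,1.5063,0.7877,1.5217)$ by $x_5$ and interchanging the first and fifth entries returns, to the numerical accuracy of the solution of system (\ref{eq1}), the metric $(c)$; hence it suffices to construct a linear isomorphism $\sigma:\fr{m}\to\fr{m}$, induced by an inner automorphism of $\fr{so}(7)$ preserving $\fr{k}$, with $\sigma(\fr{m}_1)=\fr{m}_5$, $\sigma(\fr{m}_5)=\fr{m}_1$ and $\sigma(\fr{m}_i)=\fr{m}_i$ for $i=2,3,4$. The candidate is the reflection $s_{\al_2+2\al_3}\in\mathcal{W}$: using $s_\al(\be)=\be-\frac{2\varphi(\be,\al)}{\varphi(\al,\al)}\al$ one verifies the action (\ref{r1})--(\ref{r9}), which shows that $s_{\al_2+2\al_3}$ carries the roots $\{\al_1,\al_1+\al_2\}$ spanning $\fr{m}_1$ onto $\{\al_1+\al_2+2\al_3,\al_1+2\al_2+2\al_3\}$ spanning $\fr{m}_5$ and conversely, while it permutes (up to sign) the complementary roots generating $\fr{m}_2,\fr{m}_3,\fr{m}_4$ among themselves. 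Since every element of $\mathcal{W}=N_G(T)/T$ acts on $\fr{g}$ by an inner automorphism, and this one stabilizes $R_K=\{\pm\al_2\}$, it induces a well-defined isometry of $M=SO(7)/K$ sending the metric $(d)$ to $(c)$ — the same mechanism already used in Proposition \ref{desired}.

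Combining the two steps, the non-Kähler $SO(7)$-invariant Einstein metrics of $M$ split into exactly the isometry classes $\{(a)\}$, $\{(b)\}$ and $\{(c)\cong(d)\}$, i.e. precisely three up to scale, which is the assertion. The step I expect to be the main obstacle is not the Weyl-group bookkeeping but rather the verification that $(c)$ and $(d)$ are genuinely related by exactly the rescaling-plus-transposition above — this rests on the precision of the numerical solutions of (\ref{eq1}) — together with the argument that no further automorphism of $SO(7)$ could merge any of $(a),(b),(c),(d)$: since $SO(7)$ has no outer automorphisms and the summands $\fr{m}_i$ are pairwise inequivalent, any such identification would have to be realized by an element of $\mathcal{W}$ permuting the $\fr{m}_i$, and the distinct $H_g$-values obstruct all of these, so the obstacle is resolvable.
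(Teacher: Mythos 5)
Your proposal is correct and follows essentially the same route as the paper: first the scale invariant $H_g$ separates $(a)$, $(b)$ and the pair $(c),(d)$ into distinct isometry classes, and then the Weyl reflection $s_{\al_2+2\al_3}$, which interchanges $\fr{m}_1$ and $\fr{m}_5$ while preserving $\fr{m}_2,\fr{m}_3,\fr{m}_4$, realizes the rescaling-plus-transposition taking $(d)$ to $(c)$. The extra remarks you add (absence of outer automorphisms, comparison with the K\"ahler--Einstein values of $H_g$) are consistent with, though not spelled out in, the paper's argument.
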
 
Theorem  A   in introduction,   is following  now by Proposition \ref{desired}, Theorem \ref{Themain1} and  Corollaries  \ref{isometry1} and \ref{isometry2}.
 

\end{document}